\newtheorem{define}{Definition}[section]
\newtheorem{pro}{Proposition}[section]
\newtheorem{Lem}{Lemma}[section]
\theoremstyle{plain} \newtheorem{thm}{Theorem}[section]
\newcommand{\eqdef}{\stackrel{\mathrm{def}}{=}}   
\begin{document}
\title{Adams-Moser-Trudinger inequality in the cartesian product of Sobolev spaces and its applications}
\author{~~R. Arora,~~J. Giacomoni\footnote{LMAP (UMR E2S-UPPA CNRS 5142) Bat. IPRA, Avenue de l'Universit\'e F-64013 Pau, France. email: rakesh.arora@univ-pau.fr, jacques.giacomoni@univ-pau.fr}, \ T. Mukherjee \footnote{Department of Mathematics, National Institute of Technology, Warangal, India. e-mail: tulimukh@gmail.com} and ~K. Sreenadh\footnote{Department of Mathematics, Indian Institute of Technology Delhi, Hauz Khaz, New Delhi-110016, India. e-mail: sreenadh@gmail.com} }
\maketitle
\begin{abstract}
The main aim of this article is to study non-singular version of Moser-Trudinger and Adams-Moser-Trudinger inequalities and the singular version of Moser-Trudinger equality in the Cartesian product of Sobolev spaces. As an application of these inequalities, we  study a system of Kirchhoff equations with exponential non-linearity of Choquard type. \vspace{.5cm} \\
\noindent \textbf{Key words:} Adams-Moser-Trudinger inequality, Singular Moser-Trudinger inequality,  Kirchhoff equation, Choquard non-linearity with exponential growth.
\vspace{.2cm}
 \\
\textit{2010 Mathematics Subject Classification:} 	39B72, 35J62, 35A15.
\end{abstract}
\vspace{1cm}
\section{Introduction and main results}
Let $\Omega$ be a smooth bounded domain in $\mathbb{R}^n.$ Then the classical Sobolev space embedding says that
$$W_0^{1,p}(\Omega) \hookrightarrow L^{p^*}(\Omega) \ \text{if} \ n>p\;\ \text{where} \ p^*= \frac{np}{n-p} $$
or equivalently
$$ \sup_{\|u\|_{W^{1,p}_0(\Omega)} \leq 1} \int_\Omega |u|^r < \infty \ \text{for all} \ 1 \leq r \leq p^* \  \text{where} \ \|u\|^p_{W_0^{1,p}(\Omega)}= \int_{\Omega} |\nabla u|^p dx$$
 and in the limiting case $p=n$, $W_0^{1,n}(\Omega) \hookrightarrow L^{r}(\Omega)$ for all $1 \leq r < \infty$ but  not embedded in $L^{\infty}(\Omega).$ The maximal exponent $p^*$ is called as Sobolev  critical exponent. Hence, a natural question in connection with Orlicz space embeddings is to find a function $\phi: \mathbb{R} \to \mathbb{R}^+$ with maximal growth such that
$$\sup_{\|u\|_{W^{1,n}_0(\Omega)} \leq 1} \int_{\Omega} \phi(u)dx < \infty.$$
In this connection, in 1960's, Pohozaev \cite{Poho} and Trudinger \cite{trudinger} independently answered the question using the above  function with $\phi(t)= \exp(|t|^{\frac{n}{n-1}})-1.$ Later on, in \cite{Moser}, Moser improved the result by proving the following inequality which is popularly known as the  Moser-Trudinger inequality:
\begin{thm}\label{MT}
For $n\geq 2$, $\Omega \subset \mathbb R^n$ is a bounded domain and $u \in W^{1,n}_0(\Omega)$,
\[\sup_{\|u\|_{W_0^{1,n}(\Omega)}\leq 1}\int_\Omega \exp(\alpha|u|^{\frac{n}{n-1}})dx < \infty\]
if and only if $\alpha \leq \alpha_n$, where $\alpha_n = n\omega_{n-1}^{\frac{1}{n-1}}$ and $\omega_{n-1}=$ $(n-1)$- dimensional surface area of $\mathbb{S}^{n-1}$.
\end{thm}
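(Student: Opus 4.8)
\section*{Proof proposal}

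The plan is to follow Moser's original strategy, which naturally splits into two parts: finiteness of the supremum when $\alpha \le \alpha_n$ (the main inequality), and its divergence when $\alpha > \alpha_n$ (sharpness). For the finiteness direction I would carry out three reductions. First, by the P\'olya--Szeg\"o inequality, replacing $u$ by its Schwarz symmetrization $u^{*}$ does not increase $\|\nabla u\|_{L^{n}}$, while it leaves $\int_{\Omega}\exp(\alpha|u|^{n/(n-1)})\,dx$ unchanged (the function $t\mapsto \exp(\alpha|t|^{n/(n-1)})$ is increasing in $|t|$ and $u^{*}$ is equimeasurable with $u$); moreover $u^{*}$ is supported on a ball of the same volume as $\Omega$. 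Hence it suffices to bound the integral for radial, radially nonincreasing functions $u=u(r)$ on a ball, and a dilation reduces this to the unit ball $B_{1}$: the Dirichlet norm $\|\nabla\cdot\|_{L^{n}}$ is dilation-invariant in this critical exponent, while the functional only picks up a factor proportional to $|\Omega|$, which is harmless. Second, for such radial $u$ the substitution $r=e^{-t/n}$, $w(t)=u(e^{-t/n})$, $t\in[0,\infty)$, transforms the constraint $\|u\|_{W_0^{1,n}(B_1)}\le1$ into $\omega_{n-1}n^{n-1}\int_{0}^{\infty}|\dot w|^{n}\,dt\le1$ with $w(0)=0$, and the functional into $\frac{\omega_{n-1}}{n}\int_{0}^{\infty}\exp\!\big(\alpha|w(t)|^{n/(n-1)}-t\big)\,dt$. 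After the rescaling $\varphi=(\omega_{n-1}n^{n-1})^{1/n}w$, the problem becomes: for $\varphi$ absolutely continuous on $[0,\infty)$ with $\varphi(0)=0$ and $\int_{0}^{\infty}|\dot\varphi|^{n}\,dt\le1$, control $\int_{0}^{\infty}\exp\!\big(c\,\varphi(t)^{n/(n-1)}-t\big)\,dt$, where a short computation gives $c=\alpha/\alpha_{n}$, so that $c\le1$ exactly when $\alpha\le\alpha_{n}$.

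The heart of the matter is thus \emph{Moser's lemma}: there is a constant $C_{n}$ so that for every absolutely continuous $\varphi$ on $[0,\infty)$ with $\varphi(0)=0$ and $\int_{0}^{\infty}|\dot\varphi|^{n}\,dt\le1$ one has $\int_{0}^{\infty}\exp\!\big(\varphi(t)^{n/(n-1)}-t\big)\,dt\le C_{n}$. The starting observation is elementary: by H\"older's inequality $\varphi(t)\le\big(\int_{0}^{t}|\dot\varphi|^{n}\big)^{1/n}t^{(n-1)/n}\le t^{(n-1)/n}$, hence $\varphi(t)^{n/(n-1)}-t\le0$ for every $t$, so the integrand never exceeds $1$; this already yields the bound on any fixed finite interval. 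The genuinely delicate point is convergence at infinity \emph{uniformly} over all admissible $\varphi$: one must quantify that $\varphi(t)^{n/(n-1)}$ cannot remain close to its H\"older ceiling $t$ over a long interval, since equality there would force $|\dot\varphi|$ to behave like $t^{-1/n}$, whose $n$-th power is non-integrable and would violate $\int_{0}^{\infty}|\dot\varphi|^{n}\le1$. Making this precise — via a case analysis on a suitably chosen stopping level and an integration by parts, producing an \emph{explicit} and $\varphi$-independent constant $C_{n}$ — is the step I expect to be the main obstacle; the rest is bookkeeping.

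For the sharpness direction I would test the functional against the classical concentrating Moser functions. For small $\varepsilon>0$ let $m_{\varepsilon}\in W_{0}^{1,n}(B_{1})$ be radial, equal to $\omega_{n-1}^{-1/n}\big(\log\tfrac{1}{\varepsilon}\big)^{(n-1)/n}$ on $B_{\varepsilon}$, equal to $\omega_{n-1}^{-1/n}\big(\log\tfrac{1}{\varepsilon}\big)^{-1/n}\log\tfrac{1}{|x|}$ on the annulus $\varepsilon<|x|<1$, and $0$ outside $B_{1}$. A direct computation gives $\|\nabla m_{\varepsilon}\|_{L^{n}(B_{1})}=1$, while on $B_{\varepsilon}$ one has $\exp\!\big(\alpha|m_{\varepsilon}|^{n/(n-1)}\big)=\varepsilon^{-n\alpha/\alpha_{n}}$, so that
\[
\int_{B_{1}}\exp\!\big(\alpha|m_{\varepsilon}|^{n/(n-1)}\big)\,dx\ \ge\ |B_{\varepsilon}|\,\varepsilon^{-n\alpha/\alpha_{n}}\ =\ \frac{\omega_{n-1}}{n}\,\varepsilon^{\,n(1-\alpha/\alpha_{n})},
\]
which tends to $+\infty$ as $\varepsilon\to0^{+}$ whenever $\alpha>\alpha_{n}$. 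Together with the monotonicity of the functional in $\alpha$, this shows the supremum is finite if and only if $\alpha\le\alpha_{n}$, completing the proof.
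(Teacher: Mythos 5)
The paper itself offers no proof of Theorem \ref{MT}: it is recalled verbatim as Moser's classical result and used as a black box, so the only meaningful comparison is with Moser's original argument, which your proposal reproduces in outline. The parts you actually carry out are correct: the P\'olya--Szeg\H{o} reduction to radial nonincreasing functions on a ball, the change of variables $r=e^{-t/n}$ with all constants checked correctly (in particular $c=\alpha/\alpha_n$ and the normalization $\|\nabla m_\varepsilon\|_{L^n(B_1)}=1$), and the sharpness half via the concentrating Moser functions is complete as written.

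The genuine gap is the one you flag yourself: the one-dimensional lemma $\int_0^\infty \exp(\varphi(t)^{n/(n-1)}-t)\,dt\le C_n$ is asserted, not proved, and this is not a routine deferral --- it is where the entire content of the sharp constant lives. Observe that for $\alpha<\alpha_n$, i.e.\ $c<1$, your H\"older bound already finishes the argument, since $\varphi(t)^{n/(n-1)}\le t$ gives $c\,\varphi(t)^{n/(n-1)}-t\le -(1-c)t$ and hence the uniform bound $\int_0^\infty e^{-(1-c)t}\,dt=1/(1-c)$; stating this explicitly would isolate the endpoint $\alpha=\alpha_n$ as the only place the lemma is needed. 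At $c=1$ the pointwise bound yields only that the integrand is $\le 1$, which says nothing about integrability at infinity, and the required uniform decay is exactly the stopping-level case analysis you allude to but do not perform. As submitted, the ``if'' direction of the theorem at the critical value $\alpha=\alpha_n$ is therefore not established; everything else is in order.
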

\noindent Consequently, Adams \cite{Adams} extended the Moser's inequality to higher order Sobolev spaces by proving the following inequality which is known as Adams-Moser-Trudinger inequality:
\begin{thm}\label{TM-ineq}
Let $\Omega$ be a bounded domain in $\mathbb{R}^n$ and $n,m\in \mathbb N$ satisfying $m<n$. Then for all $0 \leq \zeta \leq \zeta_{n,m}$ and $u\in W_0^{m, \frac{n}{m}}(\Omega)$ we have
\[\sup_{\|\nabla^m u \|_{L^{\frac{n}{m}}(\Omega)} \leq 1}\int_\Omega \exp(\zeta |u|^{\frac{n}{n-m}})dx < \infty,\]
where $\zeta_{n,m}$ is sharp and given by
\begin{equation*}
\zeta_{n,m}=\left\{
\begin{split}
&\frac{n}{\omega_{n-1}}\left(\frac{\pi^{n/2}2^m \Gamma\left(\frac{m+1}{2}\right)}{\Gamma\left(\frac{n-m+1}{2}\right)}\right)^{\frac{n}{n-m}} \; \ \ \text{when}\ m \ \text{is odd},\\
&\frac{n}{\omega_{n-1}}\left(\frac{\pi^{n/2}2^m \Gamma\left(\frac{m}{2}\right)}{\Gamma\left(\frac{n-m}{2}\right)}\right)^{\frac{n}{n-m}} \; \ \ \ \ \  \text{when}\ m \ \text{is even.}
\end{split}
\right.
\end{equation*}
The symbol $\nabla^m u $ denotes the $m^{\text{th}}$-order gradient of $u$ and is defined as
\begin{equation*}
\nabla^m u=\left\{
\begin{split}
&\nabla \Delta^{(m-1)/2}u \; \  \text{if}\ m \ \text{is odd},  \\
&\Delta^{m/2} u \;\ \ \ \ \ \ \ \ \text{if}\ m \ \text{is even}
\end{split}
\right.
\end{equation*}
where $\Delta$ and $\nabla$ denotes the usual Laplacian and gradient operators respectively.
\end{thm}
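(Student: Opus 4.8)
The plan is to follow Adams' original argument, whose three moving parts are an integral representation, a rearrangement estimate, and a one‑dimensional exponential inequality, with sharpness handled separately. Throughout put $p=\tfrac nm$ and $p'=\tfrac{n}{n-m}$, and note it suffices to treat $\zeta=\zeta_{n,m}$, the case $\zeta<\zeta_{n,m}$ following at once since $\exp$ is increasing and $|\Omega|<\infty$. \textbf{Step 1 (integral representation).} For $u\in C_c^\infty(\Omega)$, extend $\nabla^m u$ by zero to $\R^n$ and invert the operator defining $\nabla^m u$ against its fundamental solution: when $m$ is even, $u=(-1)^{m/2}I_m\ast\Delta^{m/2}u$ with $I_m$ the Riesz kernel of order $m$; when $m$ is odd, one integration by parts gives $u=\pm\,\nabla I_{m+1}\ast\nabla\Delta^{(m-1)/2}u$. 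In both cases one reaches the pointwise bound
\[|u(x)|\le \gamma(n,m)\int_\Omega |x-y|^{m-n}\,|\nabla^m u(y)|\,dy,\]
where $\gamma(n,m)=\frac{\Gamma((n-m)/2)}{2^m\pi^{n/2}\Gamma(m/2)}$ for $m$ even and $\gamma(n,m)=\frac{\Gamma((n-m+1)/2)}{2^m\pi^{n/2}\Gamma((m+1)/2)}$ for $m$ odd — the gradient hitting the Riesz kernel in the odd case being exactly what shifts $m\mapsto m+1$ inside the Gamma functions, producing the two cases in $\zeta_{n,m}$. By density it is enough to bound the supremum over such $u$; setting $f=|\nabla^m u|\ge 0$ with $\|f\|_{L^p(\Omega)}\le 1$ and $w=|\,\cdot\,|^{m-n}\ast f$, it remains to estimate $\int_\Omega\exp\big(\zeta_{n,m}\gamma(n,m)^{p'}w(x)^{p'}\big)\,dx$. (Working with $w$ rather than with $u$ sidesteps the fact that $|u|\notin W_0^{m,n/m}(\Omega)$ when $m\ge 2$.)

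\textbf{Step 2 (rearrangement and reduction to a one‑dimensional inequality).} The last integral depends only on the distribution function of $w$, so replace $w$ by its decreasing rearrangement and invoke O'Neil's lemma for convolutions, $w^*(t)\le w^{**}(t)\le \tfrac1t\big(\int_0^tK^*\big)\big(\int_0^tf^*\big)+\int_t^\infty K^*f^*$ with $K=|\,\cdot\,|^{m-n}$, for which $K^*(s)=(s/\omega_n)^{(m-n)/n}$, $\omega_n=\omega_{n-1}/n$, and hence $\int_0^tK^*=\tfrac nm\,\omega_n^{(n-m)/n}t^{m/n}$. Substituting $t=|\Omega|e^{-\tau}$ and $\phi(\tau)=|\Omega|^{m/n}e^{-m\tau/n}f^*(|\Omega|e^{-\tau})$ turns $\|f\|_{L^p}^p\le 1$ into $\int_0^\infty\phi^p\le 1$, while $\int_\Omega\exp(\cdots)\,dx$ becomes $|\Omega|\int_0^\infty\exp\big(\zeta_{n,m}\gamma(n,m)^{p'}w^{**}(|\Omega|e^{-\tau})^{p'}-\tau\big)\,d\tau$; since $\zeta_{n,m}\gamma(n,m)^{p'}=\tfrac{n}{\omega_{n-1}}=\omega_n^{-1}$, the normalising constants collapse and the exponent takes Adams' form $\big(\int_0^\infty a(s,\tau)\phi(s)\,ds\big)^{p'}-\tau$, where $a(s,\tau)$, built from $K^*$, satisfies $0\le a(s,\tau)\le 1$ for $0<s<\tau$ together with a uniform bound on $\|a(\cdot,\tau)\|_{L^{p'}}$. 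This calibration pins the sharp value: any $\zeta>\zeta_{n,m}$ makes $\zeta\gamma^{p'}>\omega_n^{-1}$ and destroys the cancellation of the linear term.

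\textbf{Step 3 (the one‑dimensional lemma — the main obstacle).} What remains is Adams' Lemma: if $a(s,t)\ge 0$ with $a(s,t)\le 1$ for $0<s<t$ and $\sup_{t>0}\big(\int_0^ta(s,t)^{p'}ds+\int_t^\infty a(s,t)^{p'}ds\big)^{1/p'}\le b<\infty$, then $\int_0^\infty\exp\big(-t+(\int_0^\infty a(s,t)\phi(s)\,ds)^{p'}\big)\,dt\le c_0(p,b)$ for all $\phi\ge 0$ with $\int_0^\infty\phi^p\le 1$. Verifying that the kernel of Step 2 meets these hypotheses is a routine, if fiddly, computation; proving the Lemma itself is the genuinely hard point. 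It rests on the Garsia‑type decomposition Adams uses: split $[0,\infty)$ at $t$, bound the off‑diagonal part $\int_t^\infty a\phi$ by Hölder and the second kernel estimate to extract a factor $e^{-\delta t}$ that controls large $t$, and handle the borderline diagonal part $\int_0^t a\phi$ — where the sharp constant lives — by a level‑set/iteration argument in the spirit of Moser's original proof, ensuring no loss in the constant. I expect essentially all the difficulty to lie here; Steps 1, 2 and 4 are comparatively soft.

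\textbf{Step 4 (sharpness of $\zeta_{n,m}$).} To show the constant cannot be enlarged, test against the higher‑order analogue of the Moser sequence: for a ball $B_\rho(x_0)\subset\Omega$ take $u_k$ equal, up to normalisation, to a truncation of the fundamental‑solution profile concentrating at $x_0$ (a $\log(1/|x-x_0|)$‑type profile after inverting the operator), cut off so as to match enough derivatives at an intermediate radius $\rho/k$ and glued so that $u_k\in W_0^{m,n/m}(\Omega)$ with $\|\nabla^m u_k\|_{L^{n/m}}=1$; a direct computation then yields $\int_\Omega\exp(\zeta|u_k|^{p'})\,dx\to\infty$ for every $\zeta>\zeta_{n,m}$. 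The only delicate point is arranging the matching of derivatives at the cut‑off so that the test functions genuinely belong to $W_0^{m,n/m}(\Omega)$.
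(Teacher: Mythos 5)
This theorem is quoted in the paper as a known result of Adams \cite{Adams} and is not proved there, so there is no in-paper argument to compare against; your outline is a faithful reconstruction of Adams' original proof --- the Riesz-potential representation with the correct even/odd constants $\gamma(n,m)$ calibrated so that $\zeta_{n,m}\gamma(n,m)^{n/(n-m)}=n/\omega_{n-1}$, O'Neil's convolution--rearrangement lemma, the exponential change of variables reducing everything to the Adams--Garsia one-dimensional lemma, and a concentrating Moser-type sequence for sharpness --- which is exactly the proof the paper implicitly relies on. The only caveat is that Step 3 states rather than proves that one-dimensional lemma, which, as you correctly flag, is where essentially all of the difficulty lives.
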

\noindent Using the interpolation of Hardy inequality and Moser-Trudinger inequality, Adimurthi-Sandeep \cite{AdiSan} established the singular Moser-Trudinger inequality for  functions in $W_0^{1,n}(\Omega)$. This was consequently extended by Lam-Lu \cite{LamLu} for functions in $W_0^{m, \frac{n}{m}}(\Omega)$ while proving the following singular Adams-Moser-Trudinger inequality.
\begin{thm}\label{TM-ineq1}
Let $0\leq \alpha <n$, $\Omega$ be a bounded domain in $\mathbb{R}^n$ and $n,m \in \mathbb N$ satisfying $m<n$. Then for all $0 \leq \kappa \leq \kappa_{\alpha,n,m} = \left(1-\frac{\alpha}{n}\right)\zeta_{n,m}$ we have
\begin{equation}\label{AM}
\sup_{u \in W_0^{m, \frac{n}{m}}(\Omega),\ \|\nabla^m u\|_{L^{\frac{n}{m}}(\Omega)}\leq 1}\int_\Omega \frac{\exp(\kappa |u|^{\frac{n}{n-m}})}{|x|^\alpha}dx < \infty.
\end{equation}
If $\kappa > \kappa_{\alpha,n,m}$ then the above supremum is infinite ({\it i.e.} $\kappa_{\alpha,n,m}$ is sharp).
\end{thm}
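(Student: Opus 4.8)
\noindent The statement has two independent halves: the finiteness of the supremum in \eqref{AM} for every $\kappa\le\kappa_{\alpha,n,m}$ (the inequality itself), and its failure once $\kappa>\kappa_{\alpha,n,m}$ (sharpness). My plan for the finiteness part is to dispatch the subcritical range $\kappa<\kappa_{\alpha,n,m}$ directly from Theorem~\ref{TM-ineq} by H\"older interpolation against the weight. Since $0\le\alpha<n$, the strict inequality $\kappa<\left(1-\frac{\alpha}{n}\right)\zeta_{n,m}$ permits a choice of $q>1$ with $\frac{n}{n-\alpha}<q\le\frac{\zeta_{n,m}}{\kappa}$; then, with $q'=\frac{q}{q-1}$, H\"older's inequality gives
\[
\int_\Omega\frac{\exp\big(\kappa|u|^{\frac{n}{n-m}}\big)}{|x|^\alpha}\,dx\le\left(\int_\Omega\exp\big(q\kappa|u|^{\frac{n}{n-m}}\big)\,dx\right)^{1/q}\left(\int_\Omega|x|^{-\alpha q'}\,dx\right)^{1/q'}.
\]
The second factor is finite because $\alpha q'<n$ and $\Omega$ is bounded, and the first is uniformly bounded over $\{\,\|\nabla^m u\|_{L^{n/m}}\le1\,\}$ by Theorem~\ref{TM-ineq} since $q\kappa\le\zeta_{n,m}$. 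This estimate degenerates exactly at $q=\frac{n}{n-\alpha}$, i.e. at $\kappa=\kappa_{\alpha,n,m}$, so the endpoint calls for a separate, finer argument.

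\noindent For the critical value $\kappa=\kappa_{\alpha,n,m}$ I would run the (Moser--)Adams symmetrization scheme while keeping track of the weight. First I would reduce to radial functions on a ball: replacing $\Omega$ by the ball $B_R$ with $|B_R|=|\Omega|$ and $u$ by its symmetric decreasing rearrangement $u^*$, the Hardy--Littlewood inequality together with the fact that $|x|^{-\alpha}$ is radially nonincreasing (hence coincides with its own rearrangement) yields
\[
\int_\Omega\frac{\exp\big(\kappa|u|^{\frac{n}{n-m}}\big)}{|x|^\alpha}\,dx\le\int_{B_R}\frac{\exp\big(\kappa|u^*|^{\frac{n}{n-m}}\big)}{|x|^\alpha}\,dx,
\]
with $\|\nabla^m u^*\|_{L^{n/m}}\le\|\nabla^m u\|_{L^{n/m}}\le1$ --- a P\'olya--Szeg\H{o} inequality when $m=1$, but for $m\ge2$ an estimate that must be extracted from the Riesz-potential representation of $u$ in terms of $\nabla^m u$ via O'Neil's rearrangement lemma, which is precisely where the exact constants $\zeta_{n,m}$ (with their Gamma-function factors) enter. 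Then the logarithmic substitution $|x|=Re^{-s}$ turns the radial integral into a one-dimensional one: the gradient constraint becomes a weighted $L^{n/m}$ bound on the transformed profile $v$ and its derivatives, and it remains to bound $\int_0^\infty\exp\big(\kappa\,v(s)^{\frac{n}{n-m}}-(n-\alpha)s\big)\,ds$ uniformly. Rescaling $s$ by the factor $n-\alpha$ and $v$ by an appropriate power of $\kappa$ should bring this to the one-dimensional Moser lemma (for $m=1$) or Adams lemma (for $m\ge2$), the rescaling being admissible exactly because $\kappa\le\left(1-\frac{\alpha}{n}\right)\zeta_{n,m}$, with $\kappa=\kappa_{\alpha,n,m}$ the borderline case those lemmas are designed to cover.

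\noindent For sharpness, when $\kappa>\kappa_{\alpha,n,m}$ I would test \eqref{AM} against a concentrating family: the Moser functions (if $m=1$) or Adams test functions (if $m\ge2$) $u_k$, supported in a small ball about the origin, normalized so that $\|\nabla^m u_k\|_{L^{n/m}}\le1$ and $u_k\equiv\big(\tfrac{n}{\zeta_{n,m}}\log k\big)^{\frac{n-m}{n}}$ on $B_{1/k}$. Then
\[
\int_\Omega\frac{\exp\big(\kappa|u_k|^{\frac{n}{n-m}}\big)}{|x|^\alpha}\,dx\ \ge\ \int_{B_{1/k}}\frac{\exp\big(\kappa|u_k|^{\frac{n}{n-m}}\big)}{|x|^\alpha}\,dx\ \gtrsim\ k^{-(n-\alpha)}\exp\Big(\tfrac{n\kappa}{\zeta_{n,m}}\log k\Big)=k^{\,\frac{n\kappa}{\zeta_{n,m}}-(n-\alpha)},
\]
which tends to $+\infty$ precisely when $\frac{n\kappa}{\zeta_{n,m}}>n-\alpha$, i.e. $\kappa>\left(1-\frac{\alpha}{n}\right)\zeta_{n,m}=\kappa_{\alpha,n,m}$.

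\noindent The subcritical bound and the sharpness computation are routine; the hard part will be the endpoint $\kappa=\kappa_{\alpha,n,m}$ in the higher-order case $m\ge2$, where P\'olya--Szeg\H{o} is unavailable and one must carry the O'Neil/Riesz-potential reduction through in the presence of the weight and land precisely on the one-dimensional Adams lemma with the sharp constant. Once that is in place, the weight contributes only the extra factor $e^{-(n-\alpha)s}$ in the one-dimensional problem, and the gain $1-\frac{\alpha}{n}$ in the constant is exactly what compensates for it.
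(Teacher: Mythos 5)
This theorem is not proved in the paper at all: it is quoted as a known result of Lam--Lu \cite{LamLu}, so the only internal point of comparison is the paper's proof of the analogous singular inequality in the product space (Theorem \ref{MTST00}), which is restricted to $m=1$. Measured against that, your subcritical H\"older argument and your sharpness computation are correct and essentially identical in spirit to what the paper does (Case 1 of Theorem \ref{MTST00}, and the Moser/Adams test functions $A_k$ with $u_k\equiv(\tfrac{n}{\zeta_{n,m}}\log k)^{\frac{n-m}{n}}$ on $B_{1/k}$); the exponent bookkeeping $k^{\frac{n\kappa}{\zeta_{n,m}}-(n-\alpha)}$ is right. Your endpoint treatment for $m=1$ is also the standard Adimurthi--Sandeep device, which the paper reproduces in Theorem \ref{MTST00} via the substitution $\tilde u(r)=s^{\frac{n-1}{n}}u(r^{1/s})$.

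The genuine gap is the endpoint $\kappa=\kappa_{\alpha,n,m}$ for $m\ge 2$, and it is larger than your closing caveat suggests. Two steps of your scheme fail there, not one. First, the displayed symmetrization inequality together with $\|\nabla^m u^*\|_{L^{n/m}}\le\|\nabla^m u\|_{L^{n/m}}$ is false for $m\ge 2$: P\'olya--Szeg\H{o} has no higher-order analogue, and the Adams route via the Riesz-potential representation and O'Neil's lemma controls the distribution function of $u$, not the $L^{n/m}$ norm of $\nabla^m u^*$, so you cannot literally ``replace $u$ by $u^*$'' and keep the constraint. Second, and independently, the rescaling that absorbs the weight at the critical exponent is special to $m=1$: the substitution $r\mapsto r^{1/s}$ combined with $u\mapsto s^{\frac{n-1}{n}}u$ leaves $\int_0^R|u'(r)|^nr^{n-1}dr$ invariant (this is exactly \eqref{est04} in the paper), but the $m$-th order radial operator does not commute with this change of variables, so the rescaled profile no longer satisfies the unit-ball constraint in $W_0^{m,n/m}$. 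This is precisely why Lam--Lu had to prove the singular Adams inequality by a rearrangement-free argument (a level-set decomposition with a local/global splitting) rather than by symmetrization plus scaling; note also that the present paper's authors explicitly leave the higher-order singular product-space analogue as an open question in Section 4. As written, your plan does not close the critical case for $m\ge 2$, which is the only part of the theorem that is not routine.
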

In recent years, numerous generalizations, extensions and applications of the Moser-Trudinger and Adams-Trudinger-Moser inequalities have been widely explored and studied. A vast  amount of literature is available which are devoted to study these kinds of inequalities. We refer readers to  \cite{AdiSan, Adams, LamLu, Moser} for such topics and the survey article \cite{LamLu1} including the references within. In the field of geometric analysis curvature and partial differential equations where the nonlinear term behaves like $\exp\left( |t|^{\frac{n}{n-m}} \right)$ as $t \to \infty$, these inequalities play a vital role to carry out the analysis. \vspace{.2cm} \\
Motivated by the wide interest in the current literature, the aim of this paper is to answer the question of maximal growth function in Cartesian product of Sobolev spaces and establish both Moser-Trudinger and  Adams-Moser trudinger inequality alongwith the singular Moser-Trudinger inequality in the Cartesian product of Sobolev spaces for $n,m\in \mathbb N$ such that $n \geq 2m$. Let $$\mathcal{Y}:= W_0^{m,\frac{n}{m}}(\Omega) \times W_0^{m,\frac{n}{m}}(\Omega)$$
be the Banach space endowed with the norm $$\|(u,v)\|_{\mathcal{Y}} := \left(\|u\|^\frac{n}{m}_{W_0^{m,\frac{n}{m}}(\Omega)} + \|v\|^\frac{n}{m}_{W_0^{m,\frac{n}{m}}(\Omega)}\right)^{\frac{m}{n}}$$
where $\|u\|_{W_0^{m,\frac{n}{m}}(\Omega)}^{\frac{n}{m}} := \int_{\Omega} |\nabla^m u|^\frac{n}{m} dx.$\\
Recently, Megrez et al. \cite{MSK} 
proved the following Moser-Trudinger inequality in the product space for the case $n=2,\ m=1$ ({\it i.e. }$\mathcal{Y}=H^1_0(\Omega) \times H^1_0(\Omega))$. Precisely, they established - for $(u,v) \in \mathcal{Y}$ and $\Omega \subset \mathbb{R}^2$ a smooth bounded domain, $$\sup_{\|(u,v)\|_{\mathcal{Y}}=1}\int_{\Omega} \exp({\rho (u^2+v^2)}) dx < \infty ,\ \text{provided} \ \  \rho \leq 4 \pi.$$
\noindent In this article, we first establish the non-singular version of Moser-Trudinger and Adams-Moser-Trudinger inequalities in higher dimensional product spaces. Precisely, we prove the following new result:
\begin{thm}\label{MTST0}
For $(u,v) \in \mathcal{Y}$, $n,m\in \mathbb N$ such that $n\geq 2m$ and $\Omega \subset \mathbb{R}^n$ is a bounded domain, we have $$\int_{\Omega} \exp\left({\Theta\left(|u|^{\frac{n}{n-m}}+|v|^{\frac{n}{n-m}}\right)}\right) dx < \infty$$ for any $\Theta>0.$ Moreover,
\begin{equation}\label{main112}
\sup_{\|(u,v)\|_{\mathcal{Y}}=1}\int_{\Omega} \exp\left(\Theta \left(|u|^{\frac{n}{n-m}}+|v|^{\frac{n}{n-m}}\right)\right) dx < \infty ,\ \text{provided} \ \  \Theta \leq \frac{\zeta_{n,m}}{2_{n,m}}
\end{equation}
where $2_{n,m}= 2^{\frac{n-2m}{n-m}}.$ Furthermore if $\Theta > \frac{\zeta_{n,m}}{2_{n,m}}$, then there exists a pair $(u,v) \in \mathcal{Y}$ with $\|(u,v)\|_{\mathcal{Y}}=1$ such that the supremum in \eqref{main112} is infinite.
\end{thm}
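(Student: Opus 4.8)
The plan is to deduce Theorem~\ref{MTST0} from the scalar singular and non-singular Adams--Moser--Trudinger inequalities (Theorems~\ref{TM-ineq} and~\ref{TM-ineq1}) by a pointwise Young-type inequality on the exponent. The key elementary fact is the following: for $a,b\ge 0$ and $q=\tfrac{n}{n-m}>1$ (note $q\le 2$ precisely because $n\ge 2m$, which is where the hypothesis is used), one has
\[
a^{q}+b^{q}\;\le\; 2_{n,m}\,\bigl(a^{\frac{n}{m}}+b^{\frac{n}{m}}\bigr)^{\frac{m}{n}\cdot q}
\;=\;2_{n,m}\,\bigl(a^{\frac{n}{m}}+b^{\frac{n}{m}}\bigr)^{\frac{m}{n-m}},
\]
with $2_{n,m}=2^{\frac{n-2m}{n-m}}$; this is just the comparison between the $\ell^{q}$ and $\ell^{n/m}$ norms of the vector $(a,b)$ raised to the power $q$, and the constant $2_{n,m}$ is the sharp one. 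First I would prove this inequality carefully (it reduces to maximizing $a^{q}+b^{q}$ on the sphere $a^{n/m}+b^{n/m}=1$, the extremum being at $a=b$).

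Granting this, the finiteness statement is immediate: for any $\Theta>0$ and any fixed $(u,v)\in\mathcal Y$, set $t=\|(u,v)\|_{\mathcal Y}>0$ (if $t=0$ there is nothing to prove) and apply the pointwise bound with $a=|u(x)|$, $b=|v(x)|$ to get
\[
\Theta\bigl(|u|^{q}+|v|^{q}\bigr)\;\le\;\Theta\,2_{n,m}\,\bigl(|u|^{\frac{n}{m}}+|v|^{\frac{n}{m}}\bigr)^{\frac{m}{n-m}}
\;=\;\Theta\,2_{n,m}\,t^{q}\,\Bigl(\bigl|\tfrac{u}{t}\bigr|^{\frac{n}{m}}+\bigl|\tfrac{v}{t}\bigr|^{\frac{n}{m}}\Bigr)^{\frac{m}{n-m}}.
\]
Writing $w=\max\{|u|,|v|\}/t$, we have $(|u/t|^{n/m}+|v/t|^{n/m})^{m/(n-m)}\le 2^{m/(n-m)} w^{q}$ and $\|w\|^{n/m}_{W_0^{m,n/m}}\le \|u/t\|^{n/m}_{W_0^{m,n/m}}+\|v/t\|^{n/m}_{W_0^{m,n/m}}=1$ after truncation/regularization, so the integrand is dominated by $\exp(C w^{q})$ with $\|\nabla^{m}w\|_{L^{n/m}}\le 1$; Theorem~\ref{TM-ineq} gives integrability (for finiteness with arbitrary $\Theta$ one instead fixes $(u,v)$, uses density of smooth functions, and absorbs the large constant into the subcritical part as in the standard argument). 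For the uniform bound~\eqref{main112}: if $\|(u,v)\|_{\mathcal Y}=1$ and $\Theta\le \zeta_{n,m}/2_{n,m}$, the displayed pointwise estimate yields
\[
\Theta\bigl(|u|^{q}+|v|^{q}\bigr)\;\le\;\zeta_{n,m}\,\bigl(|u|^{\frac{n}{m}}+|v|^{\frac{n}{m}}\bigr)^{\frac{m}{n-m}}\;\le\;\zeta_{n,m}\,|w|^{q}\quad\text{with } w:=\bigl(|u|^{\frac{n}{m}}+|v|^{\frac{n}{m}}\bigr)^{\frac{m}{n}},
\]
and the crucial point is that $\|\nabla^{m}w\|_{L^{n/m}(\Omega)}\le 1$: by convexity/triangle-type inequalities for the $m$-th order gradient one checks $\int_\Omega|\nabla^{m}w|^{n/m}\le \int_\Omega|\nabla^{m}u|^{n/m}+\int_\Omega|\nabla^{m}v|^{n/m}=1$. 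Then $\int_\Omega \exp(\Theta(|u|^{q}+|v|^{q}))\,dx\le \int_\Omega \exp(\zeta_{n,m}|w|^{q})\,dx$, which is bounded uniformly over the admissible $(u,v)$ by Theorem~\ref{TM-ineq} applied to $w$.

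The sharpness (optimality of $\zeta_{n,m}/2_{n,m}$) is where the real work lies. I would use the Moser/Adams extremal sequence: let $\psi_\varepsilon$ be the (suitably rescaled) concentrating test functions that witness sharpness in Theorem~\ref{TM-ineq}, normalized so that $\|\nabla^{m}\psi_\varepsilon\|_{L^{n/m}}=1$ and $\int_\Omega\exp(\zeta\,|\psi_\varepsilon|^{q})\to\infty$ for any $\zeta>\zeta_{n,m}$, and set $u=v=2^{-m/n}\psi_\varepsilon$, so that $\|(u,v)\|_{\mathcal Y}=(2\cdot 2^{-1})^{m/n}\cdot 1=1$. Then $|u|^{q}+|v|^{q}=2\cdot 2^{-mq/n}|\psi_\varepsilon|^{q}=2_{n,m}^{-1}\cdot 2_{n,m}\cdot(\cdots)$; a direct computation gives $|u|^{q}+|v|^{q}=2^{1-mq/n}|\psi_\varepsilon|^{q}=2_{n,m}|\psi_\varepsilon|^{q}$ — wait, one must check $2^{1-mq/n}=2^{(n-2m)/(n-m)}=2_{n,m}$, which holds since $mq/n=m/(n-m)$ — so for $\Theta>\zeta_{n,m}/2_{n,m}$ we get $\Theta(|u|^{q}+|v|^{q})=\Theta\,2_{n,m}\,|\psi_\varepsilon|^{q}$ with $\Theta\,2_{n,m}>\zeta_{n,m}$, forcing the integral to blow up along $\varepsilon\to 0$. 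The main obstacle is the verification that the equality case $a=b$ in the pointwise Young inequality is exactly compatible with the diagonal choice $u=v$ in the extremal sequence so that no constant is lost — i.e. that the constant $2_{n,m}$ in the pointwise bound and in the sharp threshold match; this is a bookkeeping check on the exponents, combined with making sure the truncation needed to land in $W_0^{m,n/m}$ when passing from $(|u|^{n/m}+|v|^{n/m})^{m/n}$ to an admissible competitor does not destroy the norm bound (for $m\ge 2$ the map $(u,v)\mapsto(|u|^{n/m}+|v|^{n/m})^{m/n}$ is not obviously in the Sobolev space, so one may instead argue directly with $w=\max\{|u|,|v|\}$ or work with smooth approximants and pass to the limit).
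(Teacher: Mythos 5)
Your sharpness construction ($u=v=2^{-m/n}\psi_\varepsilon$ built from the Adams concentrating sequence, with the exponent check $2^{1-mq/n}=2_{n,m}$ for $q=\tfrac{n}{n-m}$) is exactly the paper's argument and is correct. The gap is in the positive direction. Your pointwise inequality $|u|^{q}+|v|^{q}\le 2_{n,m}\bigl(|u|^{n/m}+|v|^{n/m}\bigr)^{m/(n-m)}$ is true and sharp, but the step you yourself call ``the crucial point'' --- that $w:=\bigl(|u|^{n/m}+|v|^{n/m}\bigr)^{m/n}$ belongs to $W_0^{m,n/m}(\Omega)$ with $\int_\Omega|\nabla^m w|^{n/m}\le\int_\Omega|\nabla^m u|^{n/m}+\int_\Omega|\nabla^m v|^{n/m}$ --- is unjustified and, for $m\ge 2$, false as a general principle. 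Here $\nabla^m$ means $\Delta^{m/2}$ or $\nabla\Delta^{(m-1)/2}$, and there is no chain rule or ``convexity/triangle-type inequality'' controlling $\nabla^m$ of a nonlinear function of $(u,v)$ by $\nabla^m u$ and $\nabla^m v$: already $\Delta\varphi(u)=\varphi'(u)\Delta u+\varphi''(u)|\nabla u|^2$ produces lower-order terms that are not dominated with constant $1$, and constant $1$ is essential here because $\zeta_{n,m}$ is the critical exponent. Your proposed fallbacks do not repair this: $\max\{|u|,|v|\}$ is generally not in $W^{m,p}$ for $m\ge2$, and smooth approximation cannot recover the exact unit norm bound.

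The paper's proof avoids forming any composite Sobolev function. It applies H\"older's inequality directly to $\int_\Omega e^{\Theta|u|^{q}}\,e^{\Theta|v|^{q}}$ with exponents $p_1=2_{n,m}\|u\|^{-q}$ and $p_2=2_{n,m}\|v\|^{-q}$ (plus a third exponent absorbing $|\Omega|$); these are admissible because Lemma \ref{basic} gives $\|u\|^{q}+\|v\|^{q}\le 2_{n,m}$ when $\|u\|^{n/m}+\|v\|^{n/m}=1$. Each factor becomes $\int_\Omega \exp\bigl(\Theta 2_{n,m}(|u|/\|u\|)^{q}\bigr)\le \int_\Omega \exp\bigl(\zeta_{n,m}(|u|/\|u\|)^{q}\bigr)$, which is controlled by the scalar Adams inequality applied to the normalized function $u/\|u\|$. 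That is the idea missing from your proposal. For $m=1$ your route does go through (there $|\nabla w|^{n}\le|\nabla u|^{n}+|\nabla v|^{n}$ holds pointwise by the chain rule and H\"older), but the theorem is claimed for all $m$ with $n\ge 2m$, so as written the proposal does not prove inequality \eqref{main112}.
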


\noindent As an consequence of Theorem \ref{MTST0}, we prove the following version of Lions' Lemma \cite{Lions} in the product space $\mathcal{Y}$.
\begin{thm}\label{MTST01}
Let $(u_k, v_k) \in \mathcal{Y}$  such that $\|(u_k,v_k)\|_{\mathcal{Y}}=1$ for all $k$ and $(u_k,v_k) \rightharpoonup (u,v) \not\equiv (0,0)$ weakly in $\mathcal{Y}.$ Then for all $p <  \displaystyle\frac{\zeta_{n,m}}{2_{n,m} (1- \|(u,v)\|^\frac{n}{m})^\frac{m}{n-m}}$,
\begin{equation*}
\sup_{k \in \mathbb{N}} \int_{\Omega} \exp\left(p \left(|u_k|^{\frac{n}{n-m}} + |v_k|^{\frac{n}{n-m}}\right)\right) dx < \infty.
\end{equation*}
\end{thm}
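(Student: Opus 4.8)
\emph{Proof proposal.} The statement is a Lions-type concentration lemma, and the plan is to adapt the classical localization proof of Lions' lemma, using the product Moser--Trudinger inequality of Theorem~\ref{MTST0} in place of the single-function one. Set $w_k:=u_k-u$, $z_k:=v_k-v$, so that $w_k\rightharpoonup 0$, $z_k\rightharpoonup 0$ in $W_0^{m,\frac nm}(\Omega)$, and put $\tau:=1-\|(u,v)\|_{\mathcal Y}^{\frac nm}\ge 0$. Since the conclusion concerns $\sup_{k}$, it suffices to argue along an arbitrary subsequence (if the supremum were infinite, some subsequence would have diverging integrals, and the argument below applied to that subsequence would give a contradiction); so I may assume $|\nabla^m u_k|^{\frac nm}\,dx\rightharpoonup\mu_1$ and $|\nabla^m v_k|^{\frac nm}\,dx\rightharpoonup\mu_2$ weakly-$*$ as nonnegative measures on $\overline\Omega$. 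Testing against nonnegative $\psi\in C(\overline\Omega)$ and using weak lower semicontinuity of the $L^{\frac nm}$-norm of $\psi^{\frac mn}\nabla^m u_k$ gives $\mu_1\ge|\nabla^m u|^{\frac nm}\,dx$ and $\mu_2\ge|\nabla^m v|^{\frac nm}\,dx$, while $\mu_1(\overline\Omega)+\mu_2(\overline\Omega)=\lim_k\|(u_k,v_k)\|_{\mathcal Y}^{\frac nm}=1$. Hence the concentration measure $\mu:=\mu_1+\mu_2-\big(|\nabla^m u|^{\frac nm}+|\nabla^m v|^{\frac nm}\big)\,dx$ is nonnegative with $\mu(\overline\Omega)=\tau$, so $\mu(\{y\})\le\tau$ for every $y\in\overline\Omega$.

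Fix $p<\dfrac{\zeta_{n,m}}{2_{n,m}\,\tau^{\frac{m}{n-m}}}$ (vacuous if $\tau=0$), so $p\,\tau^{\frac{m}{n-m}}<\zeta_{n,m}/2_{n,m}$. For each $y\in\overline\Omega$, since $\mu(\overline{B_r(y)})\downarrow\mu(\{y\})\le\tau$ and $\int_{B_r(y)}\big(|\nabla^m u|^{\frac nm}+|\nabla^m v|^{\frac nm}\big)\downarrow0$ as $r\downarrow0$, pick $r_y>0$ with $\mu_1(\overline{B_{r_y}(y)})+\mu_2(\overline{B_{r_y}(y)})<\tau+\varepsilon$ and both gradient integrals over $B_{r_y}(y)$ less than $\varepsilon$, where $\varepsilon>0$ will be fixed small. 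Choose $\phi_y\in C_c^\infty(B_{r_y}(y))$, $0\le\phi_y\le1$, $\phi_y\equiv1$ on $B_{r_y/2}(y)$. Then $(\phi_y w_k,\phi_y z_k)\in\mathcal Y$, and combining the Leibniz expansion $\nabla^m(\phi_y w_k)=\phi_y\nabla^m w_k+(\text{terms in }D^\beta w_k,\ |\beta|\le m-1)$, the Rellich convergence $w_k\to0$ in $W^{m-1,\frac nm}(\Omega)$, Minkowski's inequality in $L^{\frac nm}(B_{r_y}(y))$, and upper semicontinuity of mass on closed sets, one obtains
\[\limsup_{k\to\infty}\ \big\|(\phi_y w_k,\phi_y z_k)\big\|_{\mathcal Y}^{\frac nm}\ \le\ \mu_1(\overline{B_{r_y}(y)})+\mu_2(\overline{B_{r_y}(y)})+C_{n,m}\,\varepsilon^{\frac mn}\ <\ \tau+\varepsilon',\]
with $\varepsilon'=\varepsilon'(\varepsilon)\to0$ as $\varepsilon\to0$.

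Next fix $\delta>0$ and $q>1$ so close to $1$ that $(1+\delta)\,q\,p\,(\tau+\varepsilon')^{\frac{m}{n-m}}<\zeta_{n,m}/2_{n,m}$; this is possible exactly because $p\,\tau^{\frac{m}{n-m}}<\zeta_{n,m}/2_{n,m}$ (shrink $\varepsilon$ first if needed). Writing $t_k:=\|(\phi_y w_k,\phi_y z_k)\|_{\mathcal Y}$ and normalizing, Theorem~\ref{MTST0} — precisely its supremum assertion \eqref{main112}, which is where the sharp constant $\zeta_{n,m}/2_{n,m}$ and the hypothesis $n\ge2m$ are used — yields an absolute constant $C_\star<\infty$ with
\[\int_\Omega\exp\!\Big((1+\delta)qp\big(|\phi_y w_k|^{\frac{n}{n-m}}+|\phi_y z_k|^{\frac{n}{n-m}}\big)\Big)\,dx\le C_\star\qquad\text{for all large }k,\]
since $(1+\delta)qp\,t_k^{\frac{n}{n-m}}=(1+\delta)qp\,(t_k^{\frac nm})^{\frac{m}{n-m}}\le\zeta_{n,m}/2_{n,m}$ eventually by the previous step; the finitely many remaining $k$ are handled individually by the first (finiteness) assertion of Theorem~\ref{MTST0}. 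On $B_{r_y/2}(y)$ one has $\phi_y\equiv1$, so by the elementary convexity bound $|a+b|^{s}\le(1+\delta)|a|^{s}+C_\delta|b|^{s}$ with $s=\tfrac{n}{n-m}>1$,
\[|u_k|^{\frac{n}{n-m}}+|v_k|^{\frac{n}{n-m}}\le(1+\delta)\big(|\phi_y w_k|^{\frac{n}{n-m}}+|\phi_y z_k|^{\frac{n}{n-m}}\big)+C_\delta\big(|u|^{\frac{n}{n-m}}+|v|^{\frac{n}{n-m}}\big)\quad\text{on }B_{r_y/2}(y);\]
exponentiating and applying Hölder's inequality with exponents $q,q'$ bounds $\int_{B_{r_y/2}(y)}\exp(p(|u_k|^{\frac{n}{n-m}}+|v_k|^{\frac{n}{n-m}}))$ by $C_\star^{1/q}$ times $\big(\int_\Omega\exp(C_\delta q'p(|u|^{\frac{n}{n-m}}+|v|^{\frac{n}{n-m}}))\big)^{1/q'}$, the latter being finite by Theorem~\ref{MTST0} applied to the fixed pair $(u,v)\in\mathcal Y$. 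Hence $\sup_k\int_{B_{r_y/2}(y)}\exp(p(|u_k|^{\frac{n}{n-m}}+|v_k|^{\frac{n}{n-m}}))<\infty$; covering the compact set $\overline\Omega$ by finitely many of the $B_{r_y/2}(y)$ and summing proves the claim.

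The main obstacle is the localization estimate for $\|(\phi_y w_k,\phi_y z_k)\|_{\mathcal Y}$. One cannot argue globally: for $\tfrac nm>2$ the tempting identity $\|(w_k,z_k)\|_{\mathcal Y}^{\frac nm}\to\tau$ simply fails, because weak $L^{\frac nm}$-convergence does not split the $\tfrac nm$-th power of the norm in the absence of a.e.\ convergence and $\nabla^m u_k$ need not converge a.e.; it is the introduction of the concentration measure $\mu$, on whose small balls Minkowski's inequality together with $\mu(\{y\})\le\tau$ does suffice, that rescues the proof. The delicate points are controlling the lower-order Leibniz terms produced by the cutoffs, choosing the radii $r_y$ so that finitely many balls cover $\overline\Omega$ (with the usual minor modifications near $\partial\Omega$), and arranging that the local energy $\tau+\varepsilon'$ is matched against the threshold $\zeta_{n,m}/2_{n,m}$ of Theorem~\ref{MTST0}; the rest is the classical Lions-lemma bookkeeping.
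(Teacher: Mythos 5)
Your proposal is correct, but it follows a genuinely different route from the paper's proof. The paper argues globally in three short steps: it invokes the Brezis--Lieb lemma to assert $\lim_k\|(u_k-u,v_k-v)\|_{\mathcal Y}^{n/m}=1-\|(u,v)\|_{\mathcal Y}^{n/m}$; it then uses an elementary pointwise expansion of $|u_k|^{n/(n-m)}$ plus Young's inequality to obtain $|u_k|^{\frac{n}{n-m}}\le C_{1,\epsilon}|u_k-u|^{\frac{n}{n-m}}+C_{1,\epsilon}'|u|^{\frac{n}{n-m}}$ with $C_{1,\epsilon}\to1$; and it concludes by H\"older's inequality with exponents $r_1,r_2$ and an application of Theorem \ref{MTST0} to the normalized difference, choosing $\epsilon$ and $r_1$ so that $pr_1C_{1,\epsilon}\bigl(1-\|(u,v)\|_{\mathcal Y}^{n/m}\bigr)^{m/(n-m)}<\zeta_{n,m}/2_{n,m}$. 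Your concentration-measure/localization argument replaces the first of these steps and keeps the other two essentially intact (your convexity bound $|a+b|^s\le(1+\delta)|a|^s+C_\delta|b|^s$ followed by H\"older plays exactly the role of the paper's Young-plus-H\"older step, and both proofs feed a strictly sub-threshold energy into Theorem \ref{MTST0}). The trade-off is clear: the paper's proof is much shorter, but its Brezis--Lieb step requires a.e.\ convergence of $\nabla^m u_k$, which weak convergence in $L^{n/m}$ does not provide (the identity is exact only in the Hilbertian case $n=2m$); you correctly identify this as the weak point and your bound $\mu(\{y\})\le\tau$ on atoms of the defect measure needs only weak-$*$ convergence of the energy densities, at the cost of the Leibniz/Rellich/finite-cover bookkeeping. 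So your version is more robust (and arguably repairs a gap), while the paper's is the streamlined global argument.
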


\noindent Next, we prove the singular version of Moser-Trudinger inequality in the Cartesian product of Sobolov spaces when $m=1$.
\begin{thm}\label{MTST00}
For $(u,v) \in \mathcal{Y}= W^{1,n}_0(\Omega)\times W^{1,n}_0(\Omega)$, $n\geq 2$, $\lambda \in [0,n)$ and $\Omega \subset \mathbb{R}^n$ is a smooth bounded domain, we have $$\int_{\Omega} \frac{\exp(\beta(|u|^{\frac{n}{n-1}}+|v|^{\frac{n}{n-1}}))}{|x|^\lambda} dx < \infty$$ for any $\beta>0.$ Moreover,
\begin{equation}
\sup_{\|(u,v)\|_{\mathcal{Y}}=1}\int_{\Omega} \frac{\exp(\beta (|u|^{\frac{n}{n-1}}+|v|^{\frac{n}{n-1}}))}{|x|^\lambda} dx < \infty \ \text{if and only if} \ \  \frac{ 2_n\beta}{\alpha_n} + \frac{\lambda}{n} \leq 1
\end{equation}
where $2_n :=2_{n,1}=2^{\frac{n-2}{n-1}}$.
\end{thm}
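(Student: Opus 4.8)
The plan is to transfer the two--component inequality to the scalar singular Adams--Moser--Trudinger inequality of Theorem \ref{TM-ineq1} (applied with $m=1$, for which $\zeta_{n,1}=\alpha_n$), acting on the single function
\[
w:=\Big(|u|^{\frac{n}{n-1}}+|v|^{\frac{n}{n-1}}\Big)^{\frac{n-1}{n}},\qquad\text{so that}\qquad w^{\frac{n}{n-1}}=|u|^{\frac{n}{n-1}}+|v|^{\frac{n}{n-1}}\ \ \text{pointwise.}
\]
First I would check that $w\in W_0^{1,n}(\Omega)$ whenever $(u,v)\in\mathcal Y$ and establish the key gradient estimate. Differentiation gives $|\nabla w|\le\big(|u|^{\frac1{n-1}}|\nabla u|+|v|^{\frac1{n-1}}|\nabla v|\big)\big(|u|^{\frac{n}{n-1}}+|v|^{\frac{n}{n-1}}\big)^{-1/n}$ a.e., and H\"older's inequality with exponents $n$ and $\frac n{n-1}$ cancels the denominator, yielding $|\nabla w|\le\big(|\nabla u|^{\frac n{n-1}}+|\nabla v|^{\frac n{n-1}}\big)^{\frac{n-1}{n}}$. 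Raising this to the power $n$ and using convexity of $t\mapsto t^{\,n-1}$ (valid since $n\ge 2$) gives $|\nabla w|^n\le 2^{\,n-2}\big(|\nabla u|^n+|\nabla v|^n\big)$, hence $\|\nabla w\|_{L^n}^n\le 2^{\,n-2}\,\|(u,v)\|_{\mathcal Y}^n$.

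For the ``if'' direction, assume $\|(u,v)\|_{\mathcal Y}=1$, so $\|\nabla w\|_{L^n}\le 2^{(n-2)/n}$; normalising $\widetilde w:=w/\|\nabla w\|_{L^n}$ we get $\|\nabla\widetilde w\|_{L^n}\le 1$ and, since $2_n=2^{(n-2)/(n-1)}$,
\[
\beta\Big(|u|^{\frac n{n-1}}+|v|^{\frac n{n-1}}\Big)=\beta\,\|\nabla w\|_{L^n}^{\frac n{n-1}}\,|\widetilde w|^{\frac n{n-1}}\ \le\ \beta\,2_n\,|\widetilde w|^{\frac n{n-1}}.
\]
When $\frac{2_n\beta}{\alpha_n}+\frac\lambda n\le 1$, that is $\beta\,2_n\le\big(1-\tfrac\lambda n\big)\alpha_n$, Theorem \ref{TM-ineq1} applied to $\widetilde w$ bounds $\int_\Omega|x|^{-\lambda}\exp\big(\beta 2_n|\widetilde w|^{\frac n{n-1}}\big)\,dx$ uniformly over $\|\nabla\widetilde w\|_{L^n}\le 1$, which gives finiteness of the supremum. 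The unconditional finiteness of $\int_\Omega|x|^{-\lambda}\exp\big(\beta(|u|^{\frac n{n-1}}+|v|^{\frac n{n-1}})\big)\,dx$ for a fixed pair and an arbitrary $\beta>0$ follows from the same reduction: write $w=\phi+(w-\phi)$ with $\phi\in C_c^\infty(\Omega)$ chosen so that $\|\nabla(w-\phi)\|_{L^n}$ is sufficiently small, then use $(a+b)^{\frac n{n-1}}\le(1+\delta)a^{\frac n{n-1}}+C_\delta b^{\frac n{n-1}}$, the boundedness of $\phi$, and Theorem \ref{TM-ineq1} on $(w-\phi)/\|\nabla(w-\phi)\|_{L^n}$.

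For the converse (sharpness of the threshold) I would test with the classical Moser functions concentrated at the origin; here one uses that $0\in\Omega$, which is implicit in the statement since otherwise the weight is bounded. Fix $R>0$ with $\overline{B_R(0)}\subset\Omega$ and for $\varepsilon\in(0,1)$ let $m_\varepsilon\in W_0^{1,n}(\Omega)$ equal $\omega_{n-1}^{-1/n}(\log\tfrac1\varepsilon)^{\frac{n-1}{n}}$ on $B_{\varepsilon R}(0)$, equal $\omega_{n-1}^{-1/n}(\log\tfrac1\varepsilon)^{-1/n}\log\tfrac R{|x|}$ on $B_R(0)\setminus B_{\varepsilon R}(0)$, and $0$ outside $B_R(0)$, so that $\|\nabla m_\varepsilon\|_{L^n}=1$. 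Choosing the symmetric pair $u_\varepsilon=v_\varepsilon=2^{-1/n}m_\varepsilon$ gives $\|(u_\varepsilon,v_\varepsilon)\|_{\mathcal Y}=1$ and $|u_\varepsilon|^{\frac n{n-1}}+|v_\varepsilon|^{\frac n{n-1}}=2_n\,m_\varepsilon^{\frac n{n-1}}$, which on the plateau $B_{\varepsilon R}(0)$ equals $\frac{2_n n}{\alpha_n}\log\tfrac1\varepsilon$ (using $\alpha_n=n\omega_{n-1}^{1/(n-1)}$). Keeping only the plateau in the integral and using $\int_{B_{\varepsilon R}(0)}|x|^{-\lambda}\,dx=\frac{\omega_{n-1}}{n-\lambda}(\varepsilon R)^{n-\lambda}$, one obtains a lower bound of order $\varepsilon^{\,n-\lambda-\frac{2_n\beta n}{\alpha_n}}=\varepsilon^{\,n\left(1-\frac\lambda n-\frac{2_n\beta}{\alpha_n}\right)}$, which tends to $+\infty$ as $\varepsilon\to 0^+$ precisely when $\frac{2_n\beta}{\alpha_n}+\frac\lambda n>1$; hence the supremum is infinite in that range, and the equivalence is complete.

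I expect the only genuinely delicate point to be the reduction step: proving rigorously that $w\in W_0^{1,n}(\Omega)$ and that the chain rule for $\nabla w$ holds despite the non-smoothness of $t\mapsto|t|^{n/(n-1)}$ at the origin (handled by regularising with $\big(|u|^{\frac n{n-1}}+|v|^{\frac n{n-1}}+\eta\big)^{\frac{n-1}{n}}$, $\eta\downarrow 0$), and verifying that the H\"older and convexity inequalities entering $\|\nabla w\|_{L^n}^n\le 2^{\,n-2}\|(u,v)\|_{\mathcal Y}^n$ are simultaneously attained along the symmetric family $u=v$, so that the constant $2_n$ gained in the ``if'' part is exactly the one lost in the ``only if'' part. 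The restriction to $m=1$ is essential here: for $m\ge 2$ the composite function $w$ admits no comparable control of $\|\nabla^m w\|_{L^{n/m}}$.
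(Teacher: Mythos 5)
Your argument is correct, but it proceeds by a genuinely different reduction than the paper's. You compose the pair into the single function $w=(|u|^{n/(n-1)}+|v|^{n/(n-1)})^{(n-1)/n}$ (the pointwise $\ell^{n/(n-1)}$ norm of $(u,v)$), and the chain rule plus H\"older plus convexity of $t\mapsto t^{n-1}$ give $\|\nabla w\|_{L^n}^n\le 2^{n-2}\|(u,v)\|_{\mathcal Y}^n$, after which the scalar singular Adams inequality (Theorem \ref{TM-ineq1} with $m=1$, $\zeta_{n,1}=\alpha_n$) applied to $w/\|\nabla w\|_{L^n}$ yields the whole sufficiency range in one stroke, with the constant $2_n=2^{(n-2)/(n-1)}$ appearing exactly as $\|\nabla w\|_{L^n}^{n/(n-1)}$. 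The paper instead never forms a composite function: it first proves the non-singular product inequality (Theorem \ref{MTST0}) by splitting $\exp(\Theta(|u|^{n/(n-1)}+|v|^{n/(n-1)}))$ as a product and applying the generalized H\"older inequality with exponents $2_{n}/\|u\|^{n/(n-1)}$ and $2_{n}/\|v\|^{n/(n-1)}$ (Lemma \ref{basic} guaranteeing these are admissible), and then treats the singular weight in two cases: for $\frac{2_n\beta}{\alpha_n}+\frac{\lambda}{n}<1$ another H\"older step separates $|x|^{-\lambda}$, while the borderline case $\frac{2_n\beta}{\alpha_n}+\frac{\lambda}{n}=1$ requires Schwarz symmetrization and the change of variables $\tilde u(r)=s^{(n-1)/n}u(r^{1/s})$ to absorb the weight into the radial measure. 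Your route avoids that symmetrization/rescaling step entirely and handles the equality case uniformly, at the price of the chain-rule technicalities you correctly flag (which are standard, since $(s,t)\mapsto(|s|^p+|t|^p)^{1/p}$ is a Lipschitz norm, $C^1$ off the origin, and $0\le w\le|u|+|v|$ forces $w\in W_0^{1,n}(\Omega)$); the paper's splitting, by contrast, is what generalizes to the higher-order case $m\ge 2$ of Theorem \ref{MTST0}, where no comparable control of $\nabla^m w$ exists, as you observe. Your sharpness construction is essentially the paper's: the symmetric pair $u_\varepsilon=v_\varepsilon=2^{-1/n}m_\varepsilon$ is exactly their choice $c_1=c_2=2^{-1/n}$ solving $c_1^n+c_2^n=1$, $c_1^{n/(n-1)}+c_2^{n/(n-1)}=2_n$, and your exponent count $\varepsilon^{\,n(1-\lambda/n-2_n\beta/\alpha_n)}$ matches theirs.
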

\noindent Similarly we can prove  singular and non-singular  Moser-Trudinger inequalities in the  product space $\mathcal{Z}:= W^{1,n}(\Omega) \times W^{1,n}(\Omega)$ where $\Omega \subset \mathbb R^n$ is a bounded domain endowed with the norm $$\|(u,v)\|_{\mathcal{Z}} := \left(\|u\|^n_{W^{1,n}(\Omega)} + \|v\|^n_{W^{1,n}(\Omega)}\right)^{\frac{1}{n}}$$
where $\|u\|_{W^{1,n}(\Omega)}^{n} := \displaystyle\int_{\Omega} (|u|^n + |\nabla u|^n )~dx$. Precisely we establish the following new result.
\begin{thm}\label{MTST11}
For $(u,v) \in \mathcal{Z}, n \geq 2$, $\lambda \in [0,n)$ and $\Omega \subset \mathbb{R}^n$ be a smooth bounded domain, we have $$\int_{\Omega} \frac{\exp(\tilde{\beta}(|u|^{\frac{n}{n-1}}+|v|^{\frac{n}{n-1}}))}{|x|^\lambda} dx < \infty$$ for any $\tilde{\beta}>0.$ Moreover,
\begin{equation}\label{1main1}
\sup_{\|(u,v)\|_{\mathcal{Z}}=1}\int_{\Omega} \frac{\exp(\tilde{\beta}(|u|^{\frac{n}{n-1}}+|v|^{\frac{n}{n-1}}))}{|x|^\lambda} dx < \infty \ \text{ if and only if} \ \  \frac{2 \tilde{\beta}}{\alpha_n} + \frac{\lambda}{n} \leq 1.
\end{equation}
\end{thm}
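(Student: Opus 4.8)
\textit{Proof idea.} The plan is to follow the scheme used for Theorems \ref{MTST0} and \ref{MTST00}: reduce the product‑space inequality to a single‑function Moser--Trudinger inequality, and then use the concavity of $t\mapsto t^{\frac1{n-1}}$ (valid since $n\ge 2$) to convert the single‑function constant into the product constant. The only new point compared with Theorem \ref{MTST00} is that $\mathcal Z$ is modelled on $W^{1,n}(\Omega)$ with its full norm instead of $W^{1,n}_0(\Omega)$: the competitors need not vanish on $\partial\Omega$, so an extremal family can concentrate at a boundary point, where only about half of a small ball lies in $\Omega$. This boundary concentration replaces the Dirichlet constant $\alpha_n$ by $\alpha_n 2^{-\frac1{n-1}}$ at the level of a single function, and the extra factor $2^{\frac1{n-1}}$ is exactly what turns the constant $2_n=2^{\frac{n-2}{n-1}}$ of Theorem \ref{MTST00} into $2_n\cdot 2^{\frac1{n-1}}=2$ here.

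The first step I would carry out is a single‑function inequality: if $\|w\|_{W^{1,n}(\Omega)}\le 1$ then $\int_\Omega |x|^{-\lambda}\exp\!\bigl(\gamma\,|w|^{\frac n{n-1}}\bigr)\,dx$ is bounded uniformly whenever $\gamma\le \frac{1}{2^{1/(n-1)}}\bigl(1-\frac\lambda n\bigr)\alpha_n$. To prove it, cover $\overline\Omega$ by finitely many balls with a subordinate partition of unity: on a ball compactly contained in $\Omega$ one localizes and applies Theorem \ref{TM-ineq1} with $m=1$ (the weight being genuinely singular only near $0\in\overline\Omega$ and bounded elsewhere); on a ball centred at a boundary point, after flattening $\partial\Omega$ by a smooth diffeomorphism and extending the localized function by even reflection across the flat piece, one obtains a function $\tilde w$ on a full ball — centred at $0$ when $0\in\partial\Omega$ — with $\|\nabla\tilde w\|^n_{L^n}\le 2\|\nabla w\|^n_{L^n(\Omega)}+(\text{lower order})$, so Theorem \ref{TM-ineq1} applies to $\tilde w$ and, since $\|\nabla\tilde w\|^{\frac n{n-1}}_{L^n}\le 2^{\frac1{n-1}}$ up to lower‑order terms, the admissible exponent for $w$ itself is $\frac{1}{2^{1/(n-1)}}(1-\frac\lambda n)\alpha_n$. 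The $L^n$‑part of $\|w\|_{W^{1,n}(\Omega)}$ and the derivatives of the cut‑offs enter only through lower‑order corrections, because a family which makes the integral diverge must concentrate at a single point and hence has vanishing $L^n$‑mass there. \emph{I expect this localization‑and‑reflection argument, together with the treatment of the endpoint $\gamma=\frac{1}{2^{1/(n-1)}}(1-\frac\lambda n)\alpha_n$, to be the main technical obstacle; the remaining steps are essentially bookkeeping.}

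Granting this, the product structure is handled as in Theorem \ref{MTST0}. With $a=\|u\|^n_{W^{1,n}(\Omega)}$, $b=\|v\|^n_{W^{1,n}(\Omega)}$ and $a+b=1$, a Hölder inequality with exponents $\tfrac1\theta,\tfrac1{1-\theta}$, applied after splitting $|x|^{-\lambda}=|x|^{-\lambda\theta}|x|^{-\lambda(1-\theta)}$, bounds the product integral by the product of the single‑function integrals of $u/a^{1/n}$ and $v/b^{1/n}$; by the previous step these are controlled provided $\tilde\beta a^{\frac1{n-1}}/\theta$ and $\tilde\beta b^{\frac1{n-1}}/(1-\theta)$ are both at most $\frac{1}{2^{1/(n-1)}}(1-\frac\lambda n)\alpha_n$, and such a $\theta\in(0,1)$ exists iff $\tilde\beta\bigl(a^{\frac1{n-1}}+b^{\frac1{n-1}}\bigr)\le \frac{1}{2^{1/(n-1)}}(1-\frac\lambda n)\alpha_n$ (the degenerate cases $a=0$ or $b=0$ reducing to the single‑function estimate). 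By concavity, $a^{\frac1{n-1}}+b^{\frac1{n-1}}\le 2\cdot 2^{-\frac1{n-1}}=2_n$ when $a+b=1$, so the condition holds as soon as $2_n\,\tilde\beta\le \frac{1}{2^{1/(n-1)}}(1-\frac\lambda n)\alpha_n$, i.e. $2\tilde\beta\le(1-\frac\lambda n)\alpha_n$, i.e. $\frac{2\tilde\beta}{\alpha_n}+\frac\lambda n\le 1$; this yields the finiteness of the supremum in \eqref{1main1}. The pointwise finiteness for arbitrary $\tilde\beta>0$ then follows by the usual density argument: write $(u,v)=(u_1,v_1)+(u_2,v_2)$ with $(u_1,v_1)\in C^\infty(\overline\Omega)\times C^\infty(\overline\Omega)$ and $\|(u_2,v_2)\|_{\mathcal Z}$ so small that $\tilde\beta(1+\delta)\|(u_2,v_2)\|_{\mathcal Z}^{\frac n{n-1}}$ lies in the admissible range, absorb the bounded part into a multiplicative constant, and apply the supremum bound to the normalized tail.

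For sharpness, suppose $\frac{2\tilde\beta}{\alpha_n}+\frac\lambda n>1$, and test with $u_\varepsilon=v_\varepsilon=c_\varepsilon\,M_\varepsilon|_\Omega$, where $M_\varepsilon$ is the classical Moser function centred at a boundary point of $\Omega$ — which we take to be the origin when $\lambda>0$ — after flattening $\partial\Omega$ there. Since about half of its support lies in $\Omega$ and $\|M_\varepsilon\|_{L^n}\to0$, normalizing in $\mathcal Z$ forces $c_\varepsilon^n=1+o(1)$; on the inner ball $B_\varepsilon\cap\Omega$ one gets $\tilde\beta\bigl(|u_\varepsilon|^{\frac n{n-1}}+|v_\varepsilon|^{\frac n{n-1}}\bigr)=\frac{2\tilde\beta}{\alpha_n}\,(1+o(1))\,n\log\frac1\varepsilon$, whereas $\int_{B_\varepsilon\cap\Omega}|x|^{-\lambda}\,dx\simeq\varepsilon^{\,n-\lambda}$ since a boundary point subtends positive solid angle. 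Hence the integral is $\gtrsim\varepsilon^{\,n-\lambda-\frac{2\tilde\beta}{\alpha_n}n(1+o(1))}$, whose exponent is strictly negative, so it diverges as $\varepsilon\to 0$. This proves that $\frac{2\tilde\beta}{\alpha_n}+\frac\lambda n\le1$ is also necessary, completing the argument.
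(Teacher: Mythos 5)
The paper offers no actual argument for this theorem --- its ``proof'' is the single sentence that one follows the same steps as in Theorems \ref{MTST0} and \ref{MTST00} --- so the only meaningful comparison is with what those steps would produce. Repeating them verbatim cannot work: the proofs of Theorems \ref{MTST0} and \ref{MTST00} rest on Theorem \ref{TM-ineq1} for $W^{1,n}_0(\Omega)$ and on symmetrization, neither of which is available for competitors that do not vanish on $\partial\Omega$, and they would yield the constant $2_n$ rather than $2$. You have correctly diagnosed where the missing factor $2^{1/(n-1)}$ comes from (boundary concentration halves the local Dirichlet energy, replacing $\alpha_n$ by $\alpha_n2^{-1/(n-1)}$ at the single-function level, and $2_n\cdot2^{1/(n-1)}=2$), your H\"older/concavity reduction of the product inequality to the single-function one is exactly the mechanism of \eqref{est1}--\eqref{est2}, and your boundary-concentrating Moser sequence for the necessity is sound. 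In this sense your proposal is more honest than the paper's.

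There remain two genuine gaps. First, the entire weight of the argument rests on the single-function inequality $\sup_{\|w\|_{W^{1,n}(\Omega)}\le1}\int_\Omega|x|^{-\lambda}\exp\bigl(\gamma|w|^{n/(n-1)}\bigr)\,dx<\infty$ for $\gamma$ up to and including $2^{-1/(n-1)}\bigl(1-\tfrac\lambda n\bigr)\alpha_n$, which you assert but do not prove. This is a sharp Cianchi-type inequality (without boundary conditions, with a singular weight), and the localization-plus-reflection sketch does not obviously close at the endpoint: cut-off errors are ``lower order'' only after a concentration--compactness alternative is run, and the known proofs of even the non-singular endpoint case are substantially more involved than a covering argument. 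As written, your H\"older step needs the endpoint whenever $a^{1/(n-1)}+b^{1/(n-1)}=2_n$ and $2\tilde\beta=(1-\tfrac\lambda n)\alpha_n$, so you cannot discard it. Second, the ``only if'' direction is true only when $0\in\partial\Omega$, which you tacitly assume by centering the Moser sequence at the origin: if $0$ is an interior point, interior concentration at $0$ costs only $2_n\tilde\beta\le(1-\tfrac\lambda n)\alpha_n$ and boundary concentration away from $0$ costs only $2\tilde\beta\le\alpha_n$, so the supremum is finite for some $\tilde\beta$ with $\tfrac{2\tilde\beta}{\alpha_n}+\tfrac\lambda n>1$ and the stated equivalence fails. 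You should state this hypothesis explicitly rather than inherit the paper's ambiguity.
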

\vspace{.2cm}
\noindent As an application of Theorems \ref{MTST0} and \ref{MTST01}, we study the existence of solution for the following Kirchhoff system involving the exponential nonlinearity of Choquard type
\begin{equation*}
   (KCS)  \left\{
         \begin{alignedat}{2}
             {} -m\left(\int_\Omega|\nabla u|^ndx\right)\Delta_n u
             & {}=  \left(\int_{\Omega} \frac{F(y,u,v)}{|x-y|^\mu}dy\right)f_1(x,u,v),\; u>0
             && \quad\mbox{ in }\, \Omega ,
             \\
             -m\left(\int_\Omega|\nabla v|^ndx\right)\Delta_n v
             & {}=  \left(\int_{\Omega} \frac{F(y,u,v)}{|x-y|^\mu}dy\right)f_2(x,u,v),\; v>0
             && \quad\mbox{ in }\, \Omega ,
             \\
             u,v & {}= 0
             && \quad\mbox{ on }\, \partial\Omega
          \end{alignedat}
     \right.
\end{equation*}
where $\Omega$ is a smooth bounded domain, $n \geq 2$, $0 < \mu<n$, $m: \mathbb{R}^+ \to \mathbb{R}^+$ is a continuous function, $\Delta_n u := \text{div}( |\nabla u|^{n-2} \nabla u)$, $F$  satisfies suitable growth assumptions and $f_1=\frac{\partial F}{\partial u},\; f_2=\frac{\partial F}{\partial v}$. The system of type $(KCS)$ having doubly nonlocal feature was not studied in the literature so far.\\
In 1883, Kirchhoff extends the classical D'Alembert wave equation to the following model:
$$u_{tt} - m\left(\int_{0}^L \left|u_x \right|^2\right) u_{xx} =0 $$
for $t \geq 0$ and $0<x <L$,
where $u(t,x)$ is the lateral displacement at the space coordinate $x$ and time $t$, $m(t)= \frac{p_0}{\rho h} + \frac{Y_m }{2 \rho  L} t ,Y_m$  is Young modulus, $\rho$ is mass density, $h$ is the cross section area, $L$ is the length of string, $p_0$ the initial axial tension. In the case of degenerate Kirchhoff problems  $m(0)=0$ {\it i.e} initial axial tension is zero. From the physical point of view, $m(0)=0$ can be interpreted as follows: The base tension of the string is zero and $m$ measures the change of the tension in the string caused by the change of its length during vibration. It shows that the presence of nonlinear coefficient $m$ is meaningful to be considered. We cite \cite{Alves, Alves1, AGMS, LiLi} and there references within  for further considerations.\vspace{0.2cm} \\
On an analogous note, the non-local problems involving  the following convolution type non-linearity
$$ -\Delta u + V(x) u = \left(|x|^{-\mu} * F(x,u) \right) f(x,u) \ \ \text{in} \ \ \mathbb{R}^n, \ \ \ \mu \in (0,n)$$ got attention by a large scale of researchers due to its occurrence in several physical models. In 1954, Pekar \cite{Pekar} used such equation to describe the quantum theory of a polaron at rest. In 1976, Choquard \cite{Lieb} used it to model an electron trapped in its own hole. In 2000, Berg\'e and Couairon \cite{BC} studied standing waves of the non-linear non-local Schr\"odinger equation which influence the propagation of electromagnetic waves in Plasma. These kind of non-linearities also play a crucial role in the Bose-Einstein condensation \cite{DGPS}. For interested readers, we refer the recent survey paper on Choquard equations  by Moroz and Schaftingen \cite{ms-survey} and  Mukherjee and Sreenadh  \cite{MuSr}. In 2014, L\"u \cite{Lu} studied the non-degenerate Choquard equation with Kirchhoff operator in $\mathbb{R}^3$ and using the method of Nehari manifold established the existence of ground state solution. In \cite{PXZ}, authors studied the existence of non-negative solutions of a Schr\"odinger-Choquard-Kirchoff type $p$-fractional equation via variational methods. The problem of the type $(KCS)$ for the single equation, $n=2$ and without Choquard non-linearity, {\it i.e.}
$$-m\left(\int_{\Omega} |\nabla u|^2\right) \Delta u = f(x,u) \ \ \text{in}  \ \Omega, \ \ u=0 \ \ \text{in} \  \partial\Omega,$$ was studied by Figueiredo and Severo \cite{FS} which was generalized for $n$-Laplace equation by Goyal et al \cite{GMS}.
Recently in \cite{AGMS, AGMS1}, authors have studied the Kirchhoff-Choquard problem with exponential nonlinearity in the case of a single equation and proved the existence of solution using variational methods. \vspace{.3cm} \\
Coming to the system of equations, Megrez et al. \cite{MSK} studied the following parametric gradient system with exponential nonlinearity
\begin{equation*}
     \left\{
         \begin{alignedat}{2}
             {} - \Delta u
             & {}=  \lambda u^q +f_1(u,v),\; u>0
             && \quad\mbox{ in }\, \Omega ,
             \\
            - \Delta v
             & {}=  \lambda v^q +f_2(u,v),\; u>0
             && \quad\mbox{ in }\, \Omega ,
             \\
             u,v & {}= 0
             && \quad\mbox{ on }\, \partial\Omega,
          \end{alignedat}
     \right.
\end{equation*}
where $\Omega \subset \mathbb{R}^2$ is a smooth bounded domain, $q\in (0,1)$ and proved the existence and non-existence result for a suitable range of $\lambda$ by using generalized version of mountain-pass lemma. Motivated from the above articles, we study the Kirchhoff system of equations $(KCS)$ with exponential nonlinearity of Choquard type. To the best of our knowledge there is no work available till date, for Kirchhoff system involving exponential non-linearity of Choquard type even for n=2 and $m(t)\equiv1$. So, in this regard our work is first of its kind. Also on an important note, we work with the nonlinear $n$-Laplace operator for $n \geq 2.$ \vspace{.2cm}.\\
We recall the well known Hardy-Littlewood-Sobolev inequality:
 \begin{pro}\label{HLS}
Let $t,r>1$ and $0<\mu<n $ with $1/t+\mu/n+1/r=2$, $f \in L^t(\mathbb{R}^n)$ and $h \in L^r(\mathbb{R}^n)$. Then there exists a sharp constant $C(t,n,\mu,r)>0$, independent of $f,h$ such that
 \begin{equation}\label{HLSineq}
 \int_{\mathbb{R}^n}\int_{\mathbb{R}^n} \frac{f(x)h(y)}{|x-y|^{\mu}}\mathrm{d}x\mathrm{d}y \leq C(t,n,\mu,r)\|f\|_{L^t(\mathbb{R}^n)}\|h\|_{L^r(\mathbb{R}^n)}.
 \end{equation}
If $t =r = \textstyle\frac{2n}{2n-\mu}$ then
 \[C(t,n,\mu,r)= C(n,\mu)= \pi^{\frac{\mu}{2}} \frac{\Gamma\left(\frac{n}{2}-\frac{\mu}{2}\right)}{\Gamma\left(n-\frac{\mu}{2}\right)} \left\{ \frac{\Gamma\left(\frac{n}{2}\right)}{\Gamma(n)} \right\}^{-1+\frac{\mu}{n}}.  \]
 In this case there is equality in \eqref{HLSineq} if and only if $f\equiv (constant)\ h$ and
 \[h(x)= A(\gamma^2+ |x-a|^2)^{\frac{-(2n-\mu)}{2}}\]
 for some $A \in \mathbb C$, $0 \neq \gamma \in \mathbb{R}$ and $a \in \mathbb{R}^n$.
 \end{pro}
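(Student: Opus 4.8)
Proposition \ref{HLS} is the classical sharp Hardy--Littlewood--Sobolev inequality (Lieb's theorem), so in this paper it will simply be quoted; nonetheless, the route to a self-contained proof splits naturally into three stages. \textbf{A non-optimal bound.} First I would prove \eqref{HLSineq} with \emph{some} finite constant, which is all the applications to $(KCS)$ actually need. Writing $I(f,h)=\int_{\R^n}\int_{\R^n}f(x)|x-y|^{-\mu}h(y)\,dx\,dy=\int_{\R^n}h\,\bigl(|\cdot|^{-\mu}\ast f\bigr)\,dx$, one observes that $|x|^{-\mu}$ lies in the weak Lebesgue space $L^{n/\mu,\infty}(\R^n)$ with an explicitly computable quasi-norm. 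The weak Young inequality — a consequence of the Marcinkiewicz interpolation theorem applied to the sublinear map $g\mapsto|\cdot|^{-\mu}\ast g$ — then shows that convolution with $|x|^{-\mu}$ maps $L^t(\R^n)$ boundedly into $L^{r'}(\R^n)$ with $\tfrac1{r'}=\tfrac1t+\tfrac\mu n-1$; the constraint $\tfrac1t+\tfrac\mu n+\tfrac1r=2$ makes $\tfrac1{r'}=1-\tfrac1r$, and $0<\mu<n$ guarantees $t<r'$ so the interpolation hypotheses hold. Hölder's inequality $|I(f,h)|\le\|h\|_{L^r}\,\||\cdot|^{-\mu}\ast f\|_{L^{r'}}$ closes this stage.

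\textbf{Reduction to radial profiles.} To reach the \emph{sharp} constant I would assume $f,h\ge0$ and pass to the symmetric decreasing rearrangements $f^\ast,h^\ast$: the Riesz rearrangement inequality gives $I(f,h)\le I(f^\ast,h^\ast)$ while the $L^t$ and $L^r$ norms are preserved. Hence the best constant equals the supremum of $I/(\|f\|_{L^t}\|h\|_{L^r})$ over radial non-increasing functions, and any genuine extremizer must have that form modulo the symmetries of the problem — translations in general, and, in the ``diagonal'' case $t=r=\tfrac{2n}{2n-\mu}$, also dilations and the Kelvin inversion $x\mapsto x/|x|^2$.

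\textbf{The conformally invariant diagonal case.} In the case $t=r=\tfrac{2n}{2n-\mu}$, the functional $I$ becomes fully conformally invariant: via the stereographic projection $\mathbb S^n\setminus\{N\}\to\R^n$, after absorbing the Jacobian weights into $f$ and $h$, one has $I(f,h)=\int_{\mathbb S^n}\int_{\mathbb S^n}\tilde f(\xi)|\xi-\eta|^{-\mu}\tilde h(\eta)\,d\sigma(\xi)\,d\sigma(\eta)$, which is invariant under the Möbius group. Combining this conformal symmetry with the rearrangement step — either by Lieb's original compactness argument on the sphere or by the competing-symmetries iteration of Carlen--Loss — forces a maximizing sequence to converge, after suitable conformal transformations, to a constant on $\mathbb S^n$; transplanting back to $\R^n$ identifies the extremizers as precisely $h(x)=A(\gamma^2+|x-a|^2)^{-(2n-\mu)/2}$ with $f$ a scalar multiple of $h$, and plugging such an $h$ into $I$ and $\|h\|_{L^t}$ reduces everything to Beta-function integrals yielding the stated $C(n,\mu)$. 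The hard part is precisely here: because $I$ is invariant under the \emph{non-compact} conformal group, a maximizing sequence can a priori concentrate or escape to infinity, so proving that a maximizer exists and is unique up to these symmetries is the whole difficulty — exactly the content of Lieb's theorem, which for the present purposes it suffices to cite.
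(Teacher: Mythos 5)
Your proposal is correct in substance: the paper itself gives no proof of Proposition \ref{HLS} --- it is explicitly ``recalled'' as the classical sharp Hardy--Littlewood--Sobolev inequality of Lieb, and is only ever used later as a black box. Your three-stage outline (weak Young via the Marcinkiewicz interpolation for a non-sharp constant, Riesz rearrangement to reduce to radial profiles, and conformal invariance on the sphere in the diagonal case $t=r=\tfrac{2n}{2n-\mu}$ to identify the extremizers and compute $C(n,\mu)$) is a faithful summary of the standard argument, and you correctly flag that the genuine difficulty lies in the non-compactness of the conformal group, which is exactly why the paper cites the result rather than reproving it.
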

\noindent Now we state the assumptions on $m$ and $f$ for the problem $(KCS)$. Let $m: \mathbb{R}^+ \to \mathbb{R}^+$ be a continuous function satisfying the following conditions:
\begin{enumerate}
\item[(m1)] $M(t+s) \geq M(t) + M(s)$ for all $t,s \geq 0$ where $M(t)$ is the primitive of the function $m$.
\item[(m2)] There exist constants $c_0, c_1, c_2 >0$ and $ \tilde{t} >0$ such that for some $r,z \in \mathbb{R}^+$
$$ m(t) \geq c_0   \ \ \text{or}\ \  m(t) \geq t^z, \; \text{for all} \ t \geq 0$$ and
$$\ m(t) \leq c_1 + c_2 t^r, \ \text{for all} \ t \geq \tilde{t}.$$
\item[(m3)] The map $t \mapsto \frac{m(t)}{t}$ is non-increasing for $t >0.$
\end{enumerate}
We remark that the assumption $(m2)$ covers both degenerate as well as non-degenerate case of the Kirchhoff term. \vspace{.2cm} \\
\textbf{Example 1:} An example of a function $m$ satisfying $(m1)-(m3)$ is $m(t)= d_0 + d_1 t^{\beta}$ for $\beta<1$ and $d_0,d_1\geq 0$. \vspace{.2cm}
\\
\noindent Let the function $F: \Omega \times \mathbb{R}^2 \to \mathbb{R}$ be continuously differentiable with respect to second and third variable and  of the form $F(x,t,s)=  h(x,t,s) \exp(|t|^{\frac{n}{n-1}}+ |s|^{\frac{n}{n-1}})$ such that
$$ f_1(x,t,s) :=\frac{\partial F}{\partial t}(x,t,s) = h_1(x,t,s) \exp(|t|^{\frac{n}{n-1}}+ |s|^{\frac{n}{n-1}}),$$
$$f_2(x,t,s):= \frac{\partial F}{\partial s}(x,t,s) = h_2(x,t,s) \exp(|t|^{\frac{n}{n-1}}+ |s|^{\frac{n}{n-1}}).$$ 
We assume $h_i$'s for $i=1,2$ are continuous functions satisfying the following conditions-
\begin{enumerate}
\item[(f1)] $h_i(x,t,s)=0$ when either $t \leq 0$ or $s \leq 0$ and $h_i(x,t,s) >0$ when $t,s > 0$, for all $x \in \Omega$ and $i=1,2$.
\item[(f2)] For any $\epsilon >0$ and $i=1,2$
$$\lim_{t,s \to \infty} \sup_{x \in \overline{\Omega}} h_i(x,t,s) \exp(-\epsilon(|t|^{\frac{n}{n-1}}+ |s|^{\frac{n}{n-1}})) = 0,$$
$$\lim_{t,s \to \infty} \inf_{x \in \overline{\Omega}} h_i(x,t,s) \exp(\epsilon(|t|^{\frac{n}{n-1}}+ |s|^{\frac{n}{n-1}})) = \infty.$$
\item[(f3)] There exists 
 \begin{equation*}
l > \left\{
\begin{split}
& \max\left\{n-1, \frac{n(r+1)}{2}\right\} \; &&  \text{when}\ m \ \text{is non-degenerate},\\
&  \max\left\{n-1,\frac{n(z+1)}{2}, \frac{n(r+1)}{2}\right\} \; &&  \text{when}\ m \ \text{is degenerate.}
\end{split}
\right.
\end{equation*}
such that the maps 
$t \mapsto \frac{f_1(x,t,s)}{|t|^{l}}$, $s \mapsto \frac{f_2(x,t,s)}{|s|^{l}}$ are increasing functions of $t$ (uniformly in $s$ and $x$) and $s$ (uniformly in $t$ and $x$) respectively.
\item[(f4)] There exist $q, s_0,t_0, M_0 >0$ such that $s^q F(x,t,s) \leq M_0 f_2(x,t,s)$ for all $s \geq s_0$ and $t^q F(x,t,s) \leq M_0 f_1(x,t,s)$ for all $t \geq t_0$ uniformly in $x \in \Omega.$
\item[(f5)] There exists a $\gamma$ satisfying $\frac{n-2}{2} < \gamma$ such that  $\lim\limits_{(t,s) \to (0,0)} \frac{f_i(x,t,s)}{s^\gamma + t^{\gamma}} = 0$ holds for $i=1,2$.
\end{enumerate}
\begin{thm}\label{existence}
Let $m$ satisfies $(m1)-(m3)$ and $f$ satisfies $(f1)-(f5)$ and 
\begin{equation*}
\lim_{t,s\to \infty} \frac{(f_1(x,t,s)t + f_2(x,t,s)s) F(x,t,s)}{\exp( q (|t|^{\frac{n}{n-1}} + |s|^{\frac{n}{n-1}}))} = \infty \ \text{uniformly in} \ x \in \overline{\Omega}.
\end{equation*}
for some $q>2$. Then there exists a positive weak solution of the problem $(KCS)$.
\end{thm}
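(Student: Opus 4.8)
The plan is to realize the solution of $(KCS)$ as a mountain-pass critical point of the energy functional $J:\mathcal{Y}\to\mathbb{R}$, where $\mathcal{Y}=W^{1,n}_0(\Omega)\times W^{1,n}_0(\Omega)$, given by
$$J(u,v)=\frac1n\left(M(\|u\|^n)+M(\|v\|^n)\right)-\frac12\int_\Omega\int_\Omega\frac{F(x,u(x),v(x))\,F(y,u(y),v(y))}{|x-y|^{\mu}}\,dx\,dy,$$
with $\|u\|^n:=\int_\Omega|\nabla u|^n\,dx$ and $M(t)=\int_0^t m(s)\,ds$ (so $M(0)=0$). First I would check that $J$ is well defined and of class $C^1$. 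Writing $F=h\exp(|u|^{n/(n-1)}+|v|^{n/(n-1)})$, the growth control $(f2)$ bounds $|F|$ and $|f_i|$ by $C_\epsilon\exp((1+\epsilon)(|u|^{n/(n-1)}+|v|^{n/(n-1)}))$; applying the Hardy--Littlewood--Sobolev inequality (Proposition \ref{HLS}) with $t=r=\frac{2n}{2n-\mu}$ and then the product Moser--Trudinger inequality of Theorem \ref{MTST0} (after rescaling) shows the Choquard term is finite and depends continuously differentiably on $(u,v)$. Note that the \emph{sharp} exponent plays no role at this stage, since Theorem \ref{MTST0} asserts finiteness of $\int_\Omega\exp(\Theta(|u|^{n/(n-1)}+|v|^{n/(n-1)}))\,dx$ for \emph{every} $\Theta>0$.

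Next I would verify the mountain-pass geometry of $J$. Since $M(0)=0$, $J(0,0)=0$. Near the origin, integrating $(f5)$ gives $|F(x,t,s)|\lesssim\epsilon(|t|^{\gamma+1}+|s|^{\gamma+1})$, so HLS together with the Sobolev embeddings $W^{1,n}_0(\Omega)\hookrightarrow L^q(\Omega)$ (all $q<\infty$) bounds the Choquard term by $C\|(u,v)\|_{\mathcal{Y}}^{2(\gamma+1)}$; as $\gamma>\frac{n-2}{2}$ forces $2(\gamma+1)>n$ while $(m2)$ yields $M(\|u\|^n)\gtrsim\|u\|^n$ in the non-degenerate case and $\gtrsim\|u\|^{n(z+1)}$ in the degenerate one, one obtains $J(u,v)\ge\delta>0$ on a small sphere $\|(u,v)\|_{\mathcal{Y}}=\rho$. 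For the other end of the geometry, fix $(u_0,v_0)\in\mathcal{Y}$ with $u_0,v_0>0$ on an open subset of $\Omega$; the lower bound in $(f2)$ makes $F(x,tu_0,tv_0)$ grow faster than $\exp(c\,t^{n/(n-1)})$ there, while $(m2)$ bounds $M(\|tu_0\|^n)$ by $C(1+t^{n(r+1)})$, so $J(t(u_0,v_0))\to-\infty$. By the mountain-pass theorem combined with Ekeland's variational principle there is a sequence $(u_k,v_k)\subset\mathcal{Y}$ with $J(u_k,v_k)\to c_*>0$ and $J'(u_k,v_k)\to0$ in $\mathcal{Y}^{*}$.

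I would then prove that $(u_k,v_k)$ is bounded. Forming $J(u_k,v_k)-\frac1\theta\langle J'(u_k,v_k),(u_k,v_k)\rangle$ for a suitably large $\theta$, the monotonicity $(m3)$ gives $M(t)\ge\tfrac12 m(t)t$, which controls the Kirchhoff part from below, and the Ambrosetti--Rabinowitz type condition $(f4)$, together with $(f1)$ which confines the nonlinearities to the first quadrant, controls the nonlocal part; the lower growth alternatives in $(m2)$ (in particular $m(t)\ge t^z$, matched by the exponent restriction in $(f3)$) make the estimate go through in both the degenerate and non-degenerate cases, leading to $c_*+o(1)\|(u_k,v_k)\|_{\mathcal{Y}}\ge c\,\|(u_k,v_k)\|_{\mathcal{Y}}^{\nu}$ with $\nu\ge n$, hence boundedness. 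Passing to a subsequence, $(u_k,v_k)\rightharpoonup(u,v)$ in $\mathcal{Y}$, strongly in every $L^q(\Omega)$ and a.e.\ in $\Omega$.

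The last, and hardest, step is to identify $(u,v)$ as a nontrivial critical point. The crux is a strict level estimate: using the truncated Moser functions concentrating at a point of $\Omega$ as test paths in the minimax, and exploiting the super-exponential hypothesis $\frac{(f_1t+f_2s)F}{\exp(q(|t|^{n/(n-1)}+|s|^{n/(n-1)}))}\to\infty$ with $q>2$ to absorb the logarithmic correction terms, one shows that $c_*$ lies strictly below the critical level dictated by the sharp constant $\zeta_{n,1}/2_{n,1}=\alpha_n/2_n$ of Theorem \ref{MTST0}. With this bound, the generalized Lions lemma (Theorem \ref{MTST01}) provides some $p>1$ with $\sup_k\int_\Omega\exp(p(|u_k|^{n/(n-1)}+|v_k|^{n/(n-1)}))\,dx<\infty$; combined with $(f2)$ and a H\"older/HLS argument this yields equi-integrability of $F(\cdot,u_k,v_k)$ and of the convolution terms, so one may pass to the limit in $\langle J'(u_k,v_k),\phi\rangle=o(1)$ for every $\phi$ --- here the $(S_+)$ property of $-\Delta_n$ (or a Brezis--Lieb / monotonicity argument) upgrades weak to strong convergence of the gradients so that the Kirchhoff coefficients $m(\|u_k\|^n)$, $m(\|v_k\|^n)$ converge correctly. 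Thus $(u,v)$ is a weak solution; running the same level estimate against the alternative $(u,v)=(0,0)$ contradicts $c_*>0$, so $(u,v)\not\equiv(0,0)$. Finally, testing the two equations with $u^-$ and $v^-$ and invoking $(f1)$ forces $u,v\ge0$, and the strong maximum principle for $\Delta_n$ gives $u,v>0$. I expect the strict level estimate below the product-Moser--Trudinger threshold --- with the factor $2_n$ from Theorem \ref{MTST0} correctly tracked through the Moser-function computation, in the presence of the doubly nonlocal (Kirchhoff and Choquard) structure --- to be the main obstacle.
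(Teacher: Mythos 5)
Your proposal follows essentially the same route as the paper: mountain-pass geometry for a Choquard--Kirchhoff energy, boundedness of Palais--Smale sequences via $J-\tfrac{1}{\theta}\langle J',\cdot\rangle$, a strict level estimate built on the truncated Moser sequence $(c_1w_k,c_2w_k)$ with $c_1^{n/(n-1)}+c_2^{n/(n-1)}=2_n$ and the super-exponential hypothesis with $q>2$, the product Lions lemma (Theorem \ref{MTST01}) to recover compactness below that level, and the maximum principle for positivity. A few points where you diverge from, or fall short of, the paper's argument deserve attention. First, your functional decouples the Kirchhoff term as $M(\|u\|^n)+M(\|v\|^n)$, whereas the paper works with $\tfrac1n M(\|u\|^n+\|v\|^n)$; the paper's Definition of weak solution, its Palais--Smale analysis, its use of $(m1)$ and $(m3)$, and its level threshold $\tfrac1n M\bigl(\bigl(\tfrac{2n-\mu}{2n}\tfrac{\alpha_n}{2_n}\bigr)^{n-1}\bigr)$ are all tied to the coupled version, so if you keep the decoupled energy you must redo those lemmas (superadditivity $(m1)$ at least points in the right direction, but the computation with the test pair $(tc_1w_k,tc_2w_k)$ changes). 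Second, the Ambrosetti--Rabinowitz-type inequality $\eta F\le tf_1+sf_2$ used for PS-boundedness comes from $(f3)$, not $(f4)$; $(f4)$ enters only in the truncation argument ($A,B,C,D$ splitting) for the convergence of the Choquard term. Third, there is an ordering issue: Theorem \ref{MTST01} requires the weak limit to be nonzero, so you must first rule out $(u,v)\equiv(0,0)$ (the paper's Step 1, which itself invokes the level estimate together with $(m2)$) before the Lions lemma can be applied; as written your argument uses the Lions lemma first and checks nontriviality afterwards. Finally, the passage to the limit in the doubly nonlocal term --- the paper's Lemmas 3.3 and 3.4, which need the concentration analysis with the measures $\mu_i$, the local Moser--Trudinger estimate from Theorem \ref{MTST11}, the semigroup property of the Riesz potential and Vitali's theorem --- is the technical heart of the proof and is only gestured at in your sketch; "equi-integrability plus H\"older/HLS" does not by itself handle the product structure $\bigl(\int F/|x-y|^\mu\bigr)f_i$ when both factors are merely exponentially integrable.
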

Turning to the layout of the paper: In section 2, we prove the Theorems \ref{MTST0}, \ref{MTST01}, \ref{MTST00}, \ref{MTST11}. In section 3, as an application of Theorem \ref{MTST0} ,\ref{MTST01}, \ref{MTST11}, we prove the main existence result: Theorem \ref{existence} for the system of equations $(KCS)$.

\section{Proof of the main results}
\begin{Lem}\label{basic}
If $a, b>0$ such that $a+b=1$ then $a^{\alpha} + b^{\alpha} \leq 2^{1-\alpha}$ for all $0< \alpha <1.$
\end{Lem}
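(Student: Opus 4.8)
The plan is to exploit the strict concavity of the power function $t \mapsto t^{\alpha}$ on $(0,\infty)$ for the range $0<\alpha<1$. Concretely, I would first recall that $\frac{d^2}{dt^2} t^{\alpha} = \alpha(\alpha-1)t^{\alpha-2} < 0$ for $t>0$, so $t \mapsto t^{\alpha}$ is concave. Applying the two-point form of Jensen's inequality (equivalently, the midpoint concavity estimate) to the points $a$ and $b$ with equal weights $\tfrac12$ gives
\[
\frac{a^{\alpha} + b^{\alpha}}{2} \le \left(\frac{a+b}{2}\right)^{\alpha}.
\]
Since by hypothesis $a+b=1$, the right-hand side equals $(1/2)^{\alpha} = 2^{-\alpha}$, and multiplying through by $2$ yields $a^{\alpha} + b^{\alpha} \le 2^{1-\alpha}$, which is exactly the claim.

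As an alternative (and self-contained) route that avoids invoking Jensen, I would instead study the single-variable function $g(t) := t^{\alpha} + (1-t)^{\alpha}$ on the interval $(0,1)$, noting that $a^{\alpha}+b^{\alpha} = g(a)$ once we substitute $b = 1-a$. A direct computation gives $g'(t) = \alpha\bigl(t^{\alpha-1} - (1-t)^{\alpha-1}\bigr)$, whose unique zero in $(0,1)$ is $t = 1/2$ (because $\tau \mapsto \tau^{\alpha-1}$ is strictly decreasing for $0<\alpha<1$); moreover $g''(t) = \alpha(\alpha-1)\bigl(t^{\alpha-2} + (1-t)^{\alpha-2}\bigr) < 0$, so $t=1/2$ is the global maximum of $g$ on $(0,1)$. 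Evaluating, $g(1/2) = 2\cdot(1/2)^{\alpha} = 2^{1-\alpha}$, so $a^{\alpha}+b^{\alpha} = g(a) \le g(1/2) = 2^{1-\alpha}$.

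There is essentially no genuine obstacle here: the statement is an elementary consequence of concavity, and either argument above closes it in a few lines. The only point worth a moment's care is the restriction $0<\alpha<1$, which is precisely what makes $t\mapsto t^{\alpha}$ concave (for $\alpha\ge 1$ it is convex and the inequality reverses), and the strictness of the concavity, which also shows equality holds if and only if $a=b=\tfrac12$ — a remark I would include since the sharpness constant $2_{n,m}=2^{\frac{n-2m}{n-m}}$ appearing in Theorem~\ref{MTST0} is obtained by taking $\alpha = \frac{m}{n}$ (so $1-\alpha = \frac{n-m}{n}$) and then raising to an appropriate power, making the extremal case $a=b$ the relevant one.
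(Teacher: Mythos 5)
Your proposal is correct, and your second, calculus-based argument (maximizing $g(t)=t^{\alpha}+(1-t)^{\alpha}$ at $t=1/2$) is exactly the paper's proof; the Jensen/concavity route is a slightly slicker equivalent. One small correction to your closing remark: in the proof of Theorem~\ref{MTST0} the lemma is applied with $\alpha=\frac{m}{n-m}$ and $a=\|u\|^{n/m}$, $b=\|v\|^{n/m}$, so that $2^{1-\alpha}=2^{\frac{n-2m}{n-m}}=2_{n,m}$ directly, not with $\alpha=\frac{m}{n}$.
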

\begin{proof}
Let $r: (0,1] \times (0,1] \to \mathbb{R}$ such that $r(a,b)= a^{\alpha} +b^{\alpha}$ and $a+b=1$ then $$\widetilde{r}(a):=r(a, 1-a)= a^{\alpha} + (1-a)^{\alpha}$$ and $$\frac{d}{da}\widetilde{r}(a)= \alpha ( a^{\alpha-1} - (1-a)^{\alpha-1}) =0$$ gives $a=\frac{1}{2}$, which is the point of maximum (since $\frac{d}{da}\left(\frac{d}{da} \tilde{r}\right)(a)\big|_{a=\frac{1}{2}} < 0$ ). Therefore the maximum value of $\widetilde{r}$ in $(0,1]$ is $2^{1-\alpha}.$
\end{proof}
\noindent \textbf{Proof of Theorem \ref{MTST0}:}\\
We denote $\|\cdot\|:= \|\cdot\|_{W_0^{m,\frac{n}{m}}(\Omega)}$. Without loss of generality, let $(u,v) \in \mathcal{Y}\setminus\{(0,0)\}$ be such that $\|(u,v)\|_\mathcal{Y} =1$. If either $u\equiv 0$ or $v \equiv 0$, the result follows from Theorem \ref{TM-ineq}. \\
We set $\alpha = \frac{m}{n-m}$, $a=\|u\|^\frac{n}{m}$ and $b=\|v\|^\frac{n}{m}$ then Lemma \ref{basic} gives us that $$\frac{\|u\|^{\frac{n}{n-m}}}{2_{n,m}} + \frac{\|v\|^{\frac{n}{n-m}}}{2_{n,m}} \leq 1$$
where $2_{n,m}= 2^{\frac{n-2m}{n-m}}.$\\
\textbf{Case 1:} Let $\frac{\|u\|^{\frac{n}{n-m}}}{2_{n,m}} + \frac{\|v\|^{\frac{n}{n-m}}}{2_{n,m}} < 1.$ \\
\noindent Then there exists $1 < c := c(u,v) < \infty $ such that
$$\frac{\|u\|^{\frac{n}{n-m}}}{2_{n,m}} + \frac{\|v\|^{\frac{n}{n-m}}}{2_{n,m}} + \frac{1}{c}= 1.$$
Using the generalized H\"older's inequality and $\Theta \leq \frac{\zeta_{n,m}}{2_{n,m}}$ we obtain
\begin{equation}\label{est1}
\begin{split}
\int_{\Omega} & \exp({\Theta (|u|^{\frac{n}{n-m}} + |v|^{\frac{n}{n-m}})})\\
&  \leq |\Omega|^{\frac{1}{c}} \left(\int_{\Omega} \exp({\Theta 2_{n,m} \left(\frac{|u|}{\|u\|}\right)^{\frac{n}{n-m}}})\right)^{\frac{\|u\|^{\frac{n}{n-m}}}{2_{n,m}}} \left(\int_{\Omega} \exp({\Theta 2_{n,m} \left(\frac{|v|}{\|v\|}\right)^{\frac{n}{n-m}}})\right)^{\frac{\|v\|^{\frac{n}{n-m}}}{2_{n,m}}}\\
&\leq C \left(\int_{\Omega} \exp({\zeta_{n,m} \left(\frac{|u|}{\|u\|}\right)^{\frac{n}{n-m}}})\right)^{\frac{\|u\|^{\frac{n}{n-m}}}{2_{n,m}}}  \left(\int_{\Omega} \exp({\zeta_{n,m}\left(\frac{|v|}{\|v\|}\right)^{\frac{n}{n-m}}})\right)^{\frac{\|v\|^{\frac{n}{n-m}}}{2_{n,m}}}
\end{split}
\end{equation}
where $C$ is a positive constant depending on $|\Omega|$ but independent of $u,v.$ \vspace{.2cm}\\
\textbf{Case 2:} $\frac{\|u\|^{\frac{n}{n-m}}}{2_{n,m}} + \frac{\|v\|^{\frac{n}{n-m}}}{2_{n,m}} = 1.$ \vspace{.2cm}\\
Applying the H\"older's inequality and $\Theta\leq \frac{\zeta_{n,m}}{2_{n,m}}$ we obtain
\begin{equation}\label{est2}
\begin{split}
\int_{\Omega} & \exp({\Theta (|u|^{\frac{n}{n-m}} + |v|^{\frac{n}{n-m}})}) \\
& \leq  \left(\int_{\Omega} \exp({\Theta 2_{n,m} \left(\frac{|u|}{\|u\|}\right)^{\frac{n}{n-m}}})\right)^{\frac{\|u\|^{\frac{n}{n-m}}}{2_{n,m}}}  \left(\int_{\Omega} \exp({\Theta 2_{n,m} \left(\frac{|v|}{\|v\|}\right)^{\frac{n}{n-m}}})\right)^{\frac{\|v\|^{\frac{n}{n-m}}}{2_{n,m}}}\\
&\leq
 \left(\int_{\Omega} \exp({\zeta_{n,m} \left(\frac{|u|}{\|u\|}\right)^{\frac{n}{n-m}}})\right)^{\frac{\|u\|^{\frac{n}{n-m}}}{2_{n,m}}}  \left(\int_{\Omega} \exp({\zeta_{n,m}\left(\frac{|v|}{\|v\|}\right)^{\frac{n}{n-m}}})\right)^{\frac{\|v\|^{\frac{n}{n-m}}}{2_{n,m}}}.
\end{split}
\end{equation}
Now by combining \eqref{est1}, \eqref{est2} and taking supremum over $\|(u,v)\|_{\mathcal{Y}} =1$, we obtain the desired inequality \eqref{main112}.
For the remaining part of the proof, we assume that $0 \in \Omega$ and seek use of the Adams function to construct a sequence of test functions. Let us denote $\mathcal{B}(0,l)\eqdef \mathcal{B}_l$ as a ball with center $0$ and radius $l$ in $\mathbb{R}^n$ then without loss of generality, we can assume that $ B(0,l) \subset \Omega$ for $ \in (0,1)$. We recall the following result (see \cite{Lakkis}): For $l\in (0,1)$, there exists
\begin{equation}\label{ul}
U_l \in \{u \in W_0^{m,\frac{n}{m}}(\Omega): u|_{\mathcal{B}_l}=1 \}
\end{equation} such that $$\|U_l\|^{\frac{n}{m}}= C_{m ,\frac{n}{m}}(\mathcal{B}_l; \mathcal{B}_1) \leq \left(\frac{ \zeta_{n,m}}{n \log\left(\frac{1}{l}\right)}\right)^{\frac{n-m}{m}}$$ where $C_{m,\frac{n}{m}}(K,E)$ is the conductor capacity of $K$ in $E$ whenever $E$ is an open set and $K$ is relatively compact subset of $E$ and $C_{m ,\frac{n}{m}}(K;E) \eqdef \inf\{\|u\|^{\frac{n}{m}}: u \in C_0^\infty(E), u|_{K}=1\}.$
Let us set $\sigma>0$ and $l = \frac{1}{k},$ for $k \in \mathbb N$. Also we define
\begin{equation*}
A_k(x)=\left\{
\begin{split}
&\left(\frac{n \log(k)}{ \zeta_{n,m}}\right)^\frac{n-m}{n} U_{\frac{1}{k}}\left(\frac{x}{\sigma}\right) \  \text{if} \ |x| < \sigma;\\
& 0 \ \ \ \ \quad \quad \quad \ \ \ \ \  \ \ \ \ \ \ \ \ \ \ \ \ \ \ \  \text{if} \ |x| \geq  \sigma.\\
\end{split}
\right.
\end{equation*}
Then we have $A_k(x) \big|_{\mathbb{B}_{\frac{\sigma}{k}}}=\left(\frac{n \log(k)}{ \zeta_{n,m}}\right)^\frac{n-m}{n}$ and $\|A_k\| \leq 1$, Now we consider $$Z_k= c_1 w_k \ \ \ \text{and}\ \ V_k= c_2 w_k$$
where $w_k(x)= \frac{A_k}{\|A_k\|}$ and  $c_1, c_2 \in \mathbb{R}^+$  verifies $$c_1^\frac{n}{m} + c_2^\frac{n}{m} =1 \ \text{and}\  c_1^{\frac{n}{n-m}} + c_2^{\frac{n}{n-m}} = 2_{n,m}$$
which implies that supp$(w_k) \subset B_{\sigma}(0)$ and $\|w_k\|=1$ for all $k$. The existence of $c_1, c_2$ can be proved using Lemma \ref{basic}. Thus we obtain
\begin{equation*}
\begin{split}
\|Z_k, V_k\|_{\mathcal{Y}} &= \left(\|Z_k\|^\frac{n}{m} + \|V_k\|^\frac{n}{m} \right)^{\frac{m}{n}} = \left(c_1^\frac{n}{m} \|w_k\|^\frac{n}{m} + c_2^\frac{n}{m} \|w_k\|^ \frac{n}{m}\right)^{\frac{m}{n}}\\
&= \|w_k\| (c_1^\frac{n}{m} + c_2^\frac{n}{m})^{\frac{m}{n}}=1.
\end{split}
\end{equation*}
So if $\Theta > \frac{\zeta_{n,m}}{2_{n,m}}$, then for some $\epsilon>0$, $\Theta= (1+\epsilon)\frac{\zeta_{n,m}}{2_{n,m}}$ which gives that
\begin{equation*}
\begin{split}
\int_{\Omega} \exp({\Theta (|U_k|^{\frac{n}{n-m}} + |V_k|^{\frac{n}{n-m}})} )& \geq  \int_{B_{\frac{\sigma}{k}}} \exp\left({(1+\epsilon)\frac{\zeta_{n,m}}{2_{n,m}}(|w_k|^{\frac{n}{n-m}} (c_1^{\frac{n}{n-m}} + c_2^{\frac{n}{n-m}}) )}\right)\\
& = \int_{B_{\frac{\sigma}{k}}} k^{n(1+\epsilon)} \geq C_3 k^{\epsilon}  \to \infty \ \ \text{as} \ k \to \infty.
\end{split}
\end{equation*}
This completes the proof. \qed\\

\noindent \textbf{Proof of Theorem \ref{MTST01} :}
Using Brezis-Lieb lemma, it is easy to see that
$$\lim_{k \to \infty} \|(u_k-u), (v_k-v)\|^\frac{n}{m}_\mathcal{Y} = 1- \|(u,v)\|_{\mathcal{Y}}^\frac{n}{m}$$ and
\begin{equation}\label{est5}
|u_k|^\frac{n}{n-m} \leq \left(|u_k-u|^\frac{n}{n-m} + |u|^\frac{n}{n-m}\right)+ C(|u_k-u|^{\frac{m}{n-m}}|u|+|u|^{\frac{m}{n-m}}|u_k-u| )
\end{equation}
where $C\eqdef C(n,m)>0$. Now for any $\epsilon>0$, from Young's inequality we have that 
\[ab \leq \frac{m}{n}(\epsilon a)^{\frac{n}{m}}+ \frac{n-m}{n}\left(\frac{b}{\epsilon}\right)^{\frac{n}{n-m}}.\]
This gives us 
\begin{align*}
|u_k|^\frac{n}{n-m} & \leq \left((1+C_1\epsilon^{\frac{n}{m}}+C_1{\epsilon^{\frac{n}{n-m}}})|u_k-u|^\frac{n}{n-m} +  (1+C_1\epsilon^{\frac{-n}{m}}+C_1{\epsilon^{\frac{-n}{n-m}}})|u|^\frac{n}{n-m}\right)\\
&:= C_{1,\epsilon}|u_k-u|^\frac{n}{n-m} + C_{1,\epsilon}^\prime|u|^\frac{n}{n-m}\ \text{(say)}. 
\end{align*}
Similarly we also have 
\[|v_k|^\frac{n}{n-m} \leq  C_{1,\epsilon}|v_k-v|^\frac{n}{n-m} + C_{1,\epsilon}^\prime|v|^\frac{n}{n-m}.\]
Therefore by using  H\"older inequality and above estimates we obtain,
\begin{equation*}
\begin{split}
&\int_{\Omega} \exp\left({p (|u_k|^{\frac{n}{n-m}} + |v_k|^{\frac{n}{n-m}})}\right) dx \leq \left(\int_{\Omega} \exp\left({p C_{1,\epsilon}r_1 \left(|u_k-u|^\frac{n}{n-m} +|v_k-v|^\frac{n}{n-m}\right)}\right) dx \right)^{\frac{1}{r_1}} \\
& \ \ \ \ \ \ \ \ \ \ \ \ \ \ \ \ \ \ \ \  \ \ \ \ \quad \quad \quad \quad \quad \quad \quad \quad \quad \quad . \left(\int_{\Omega} \exp\left({p C_{1,\epsilon}^\prime r_2  \left(|u|^\frac{n}{n-m} +|v|^\frac{n}{n-m}\right)}\right) dx \right)^{\frac{1}{r_2}}\\
& \leq C(n,m,u,v,r_2)\left(\int_{\Omega} \exp\left({pC_{1,\epsilon} r_1 (\|(u_k-u), (v_k-v)\|_{\mathcal{Y}})^\frac{n}{n-m}}\right.\right.\\
& \ \ \ \ \left.\left.\left(\left(\frac{|u_k-u|}{\|(u_k-u), (v_k-v)\|_{\mathcal{Y}}}\right)^\frac{n}{n-m} + \left(\frac{|v_k-v|}{\|(u_k-u), (v_k-v)\|_{\mathcal{Y}}}\right)^\frac{n}{n-m}\right)\right) dx \right)^{\frac{1}{r_1}} 
\end{split}
\end{equation*}
where $r_1$ and $r_2$ are H\"{o}lder conjugate to each other and $C(n,m,u,v,r_2)$ is a positive constant independent of $k$. Now since $C_{1,\epsilon} \to 1$ as $\epsilon \to 0$,  by choosing $\epsilon>0$ small enough and $r_1 >1$ very close to 1 such that 
\[pr_1C_{1,\epsilon}(1-\|(u,v)\|_{\mathcal{Y}}^{\frac{n}{m}})^{\frac{m}{n-m}}< \frac{\zeta_{n,m}}{2_{n,m}}\]
 we get the desired result, by using Theorem \ref{MTST0}.
\qed \vspace{0.2cm} \\
To prove the following Singular Moser-Trudinger inequality in cartesian product of Sobolev space taking $m=1$ and using the idea of Theorem $2.1$ in \cite{AdiSan}. \vspace{.2cm}\\
\noindent \textbf{Proof of Theorem \ref{MTST00}:} \vspace{.2cm}\\
We denote $\|\cdot\|:= \|\cdot\|_{W_0^{1,n}(\Omega)}$ in this proof. Let $(u,v) \in \mathcal{Y}$  be such that $\|(u,v)\|_\mathcal{Y} =1$, $\lambda \in (0,n)$ and $\beta>0$. Then following two cases arise: \vspace{.1cm}\\
\textbf{Case 1:} Let $\frac{\beta 2_{n}}{\alpha_{n}} + \frac{\lambda}{n} < 1$ then we choose $t>1$ such that  $$\frac{\beta 2_{n}}{\alpha_n} + \frac{\lambda t}{n} = 1.$$
Now by using Cauchy-Schwarz  inequality and Theorem \ref{MTST0}, we obtain
\begin{equation}\label{est01}
\begin{split}
\int_{\Omega} \frac{\exp({\beta (|u|^{\frac{n}{n-1}} + |v|^{\frac{n}{n-1}})})}{|x|^\lambda} & \leq  \left(\int_{\Omega} \exp\left({\frac{\alpha_{n}}{2_{n}} \left(|u|^{\frac{n}{n-1}} + |v|^{\frac{n}{n-1}}\right)}\right)\right)^{\frac{\beta 2_{n}}{\alpha_{n}}}. \left(\int_{\Omega} \frac{1}{|x|^\frac{n}{t}}\right)^{\frac{\lambda t}{n}} \leq C
\end{split}
\end{equation}
where $C$ is a constant independent of $u,v.$ \vspace{.2cm}\\
\textbf{Case 2:} Let $\frac{\beta 2_{n}}{\alpha_{n}} + \frac{\lambda}{n} = 1.$ Then
from standard symmetrization and density arguments we can reduce to the case $\Omega$ being a ball $B(0,R)$ with centre origin and radius $R$ and $u,v$ being positive smooth and radial functions. Therefore 
\begin{equation}\label{est02}
\int_{B(0,R)} (|\nabla u|^n + |\nabla v|^n) dx = \omega_{n-1} \int_{0}^R ((u'(r))^n + (v'(r))^n) r^{n-1} dr
\end{equation}
and
\begin{equation}\label{est03}
\int_{B(0,R)} \frac{\exp\left(\frac{s \alpha_{n}}{2_{n}}(|u|^{\frac{n}{n-1}} + |v|^{\frac{n}{n-1}})\right)}{|x|^{(1-s) n}} dx= \int_{0}^R \exp\left(\frac{s \alpha_{n}}{2_{n}}(|u|^{\frac{n}{n-1}} + |v|^{\frac{n}{n-1}})\right) r^{sn-1} dr
\end{equation}
where $s= \frac{\beta 2_{n}}{\alpha_{n}}$ so that $\lambda= (1-s) n.$ Now we set
$$\tilde{u}(r)= s^{\frac{n-1}{n}} u(r^\frac{1}{s}) \ \text{and} \  \tilde{v}(r)= s^{\frac{n-1}{n}} v (r^\frac{1}{s})\ \text{for} \ r \in [0, R^s].$$
Therefore
\begin{equation}\label{est04}
\int_{0}^R ((u'(r))^n + (v'(r))^n) r^{n-1} dr= \int_{0}^{R^s} ((\tilde{u}'(r))^n + (\tilde{v}'(r))^n) r^{n-1} dr,
\end{equation}
\begin{equation}\label{est05}
\int_{0}^R \exp\left(\frac{s \alpha_{n}}{2_{n}}(|u|^{\frac{n}{n-1}} + |v|^{\frac{n}{n-1}})\right) r^{sn-1} dr= \frac{1}{s} \int_{0}^{R^s} \exp\left(\frac{\alpha_{n}}{2_{n}}(|\tilde{u}|^{\frac{n}{n-1}} + |\tilde{v}|^{\frac{n}{n-1}})\right) r^{n-1} dr.
\end{equation}
Now by combining \eqref{est02}-\eqref{est05} and taking supremum, we obtain 
\begin{align*}
&\sup_{\|(u,v)\|_{\mathcal{Y}}=1} \int_{B(0,R)} \frac{\exp\left(\frac{s \alpha_{n}}{2_{n}}(|u|^{\frac{n}{n-1}} + |v|^{\frac{n}{n-1}})\right)}{|x|^{(1-s) n}} dx \\
&\leq \sup_{\|(\tilde u, \tilde v)\|_{\mathcal{Y}}=1} \frac{R^{s(n-1)}}{s} \int_{0}^{R^s} \exp\left(\frac{\alpha_{n}}{2_{n}}(|\tilde{u}|^{\frac{n}{n-1}} + |\tilde{v}|^{\frac{n}{n-1}})\right)  dr <\infty
\end{align*}
which is the desired inequality.
For the    remaining part of the proof, we assume $0 \in \Omega$ and define
\begin{equation*}
w_k(x)=\frac{1}{\omega_{n-1}^{\frac{1}{n}}}\left\{
\begin{split}
& (\log k)^{\frac{n-1}{n}},\; 0\leq |x|\leq \frac{\rho}{k},\\
& \frac{\log \left(\frac{\rho}{|x|}\right)}{(\log k)^{\frac{1}{n}}}, \; \frac{\rho}{k}\leq |x|\leq \rho,\\
& 0,\;\ \ \ \ \ \ \ \ \ \ \  |x|\geq \rho
\end{split}
\right.
\end{equation*}
such that $supp(w_k) \subset B_{\rho}(0)$ and $\|w_k\|=1$ for all $k.$
Let $c_1, c_2 \in \mathbb{R}^+$  such that $c_1^n + c_2^n =1$ and $c_1^{\frac{n}{n-1}} + c_2^{\frac{n}{n-1}} = 2^{\frac{n-2}{n-1}}$ (The existence of $c_1, c_2$ can be proved by taking the maximum of function mentioned in Lemma \ref{basic}).\\
Also we define $$U_k= c_1 w_k \ \ \ \text{and}\ \ V_k= c_2 w_k$$
such that
\begin{equation*}
\begin{split}
\|U_k, V_k\|_{\mathcal{Y}} = \left(\|U_k\|^n + \|V_k\|^n \right)^{\frac{1}{n}} = \left(c_1^n \|w_k\|^n + c_2^n \|w_k\|^n\right)^{\frac{1}{n}}= \|w_k\| (c_1^n + c_2^n)^{\frac{1}{n}}=1.
\end{split}
\end{equation*}
Now let $\beta > \left(1-\frac{\lambda}{n}\right) \frac{\alpha_n}{2_n}$, then for some $\epsilon>0$, $\beta= (1+\epsilon) \left(1-\frac{\lambda}{n}\right)\frac{\alpha_n}{2_n}$ and
\begin{equation*}
\begin{split}
&\int_{\Omega} \frac{\exp\left({\beta (|U_k|^{\frac{n}{n-1}} + |V_k|^{\frac{n}{n-1}})}\right)}{|x|^\lambda} \geq  \int_{B_{\frac{\rho}{k}}} \frac{\exp\left({(1+\epsilon) \left(1-\frac{\lambda}{n}\right)\frac{\alpha_n}{2_n}|w_k|^{\frac{n}{n-1}} (c_1^{\frac{n}{n-1}} + c_2^{\frac{n}{n-1}})}\right)}{|x|^\lambda}\\
&\geq  \int_{B_{\frac{\rho}{k}}} k^{n(1+\epsilon) \left(1-\frac{\lambda}{n}\right)+ \lambda} \geq C_3 k^{\epsilon(n-\lambda)}  \to \infty \ \ \text{as} \ k \to \infty.
\end{split}
\end{equation*}
\qed\\
\textbf{Proof of Theorem \ref{MTST11}:} The proof can be done by following the same steps as in Theorems \ref{MTST0} and \ref{MTST00}.\qed

\section{Applications}
In this section, we study the following system of $n$-Kirchhoff Choquard equations with exponential nonlinearity
\begin{equation*}
     (KCS)
     \left\{
         \begin{alignedat}{2}
             {} -m(\int_\Omega|\nabla u|^ndx)\Delta_n u
             & {}=  \left(\int_{\Omega} \frac{F(y,u,v)}{|x-y|^\mu}dy\right)f_1(x,u,v),\; u>0
             && \quad\mbox{ in }\, \Omega ,
             \\
             -m(\int_\Omega|\nabla v|^ndx)\Delta_n v
             & {}=  \left(\int_{\Omega} \frac{F(y,u,v)}{|x-y|^\mu}dy\right)f_2(x,u,v),\; v>0
             && \quad\mbox{ in }\, \Omega ,
             \\
             u,v & {}= 0
             && \quad\mbox{ on }\, \partial\Omega,
          \end{alignedat}
     \right.
\end{equation*}
where $\Omega$ is a smooth bounded domain in $\mathbb R^n$, $0 < \mu<n$ and $m,f$ satisfies  assumptions $(m1)-(m3)$ and $(f1)-(f5)$. Let $\mathcal{P}:= W_0^{1,n}(\Omega) \times W_0^{1,n}(\Omega)$ endowed with the graph norm
$$\|(u,v)\| := \left(\|u\|^n_{W_0^{1,n}(\Omega)} + \|v\|^n_{W_0^{1,n}(\Omega)}\right)^{\frac{1}{n}}$$
where $\|u\|_{W_0^{1,n}(\Omega)}^{n} := \int_{\Omega} |\nabla u|^n dx.$ The study of the elliptic system $(KCS)$ is motivated by  Theorems \ref{MTST0} and \ref{MTST01}.
 Following is the notion of weak solution for $(KCS)$.
\begin{define}
A function $(u,v) \in \mathcal{P}$ is said to be weak solution of $(KCS)$ if for all $(\phi, \psi) \in \mathcal{P}$, it satisfies
\begin{equation*}
\begin{split}
m(\|u,v\|^n) &\left(\int_{\Omega} |\nabla u|^{n-2} \nabla u \nabla \phi dx + \int_{\Omega} |\nabla v|^{n-2} \nabla v \nabla \psi dx \right)\\
&= \int_{\Omega} \left(\int_{\Omega} \frac{F(x,u,v)}{|x-y|^{\mu}} dy \right) ( f_1(x,u,v) \phi + f_2(x,u,v) \psi )dx.
\end{split}
\end{equation*}
\end{define}
\noindent We define the energy functional $J$ on $\mathcal{P}$ as	
\begin{equation}\label{EneFunc}
\begin{split}
J(u,v)= \frac{1}{n} M(\|u,v\|^n) -\frac{1}{2} \int_{\Omega} \left( \int_{\Omega} \frac{F(y,u,v)}{|x-y|^{\mu}} dy\right) F(x,u,v) dx.
\end{split}
\end{equation}
Using assumption $(f1)-(f3)$, we get that for any $\epsilon>0, p\geq 1$ and $1 \leq  k < l+1$ there exist constant $C_1, C_2$ such that for any $(x,t,s) \in \Omega \times \mathbb{R}^2$
\begin{equation}\label{bdd1}
|F(x,t,s)| \leq C_1 (|s|^k + |t|^k) + C_2 (|s|^p + |t|^p) \exp((1+\epsilon)(|s|^{\frac{n}{n-1}}+ |t|^{\frac{n}{n-1}})).
\end{equation}
Then by using Sobolev embedding and Hardy-Littlewood Sobolev inequality, we obtain $F(u,v) \in L^q(\Omega \times \Omega)$ for any $q \geq 1$ and the energy functional $J$ is well defined in $\mathcal{P}.$

\subsection{Mountain Pass geometry and Analysis of Palais-Smale sequence}
In this section we show that the energy functional $J$ satisfies the mountain pass geometry and derive the integral estimates of Choquard term by exploiting the weak convergence of Palais-Smale squence in appropriate spaces.

\begin{Lem}\label{MPG}
Assume m and f satisfies $(m2)$ and $(f1)-(f3)$ respectively then
\begin{enumerate}
\item[(i)] There exists $\rho >0$ such that $J(u,v) \geq \sigma$ when $\|(u,v)\|= \rho$, for some $\sigma >0.$
\item[(ii)] There exists a $(\tilde u,\tilde v) \in \mathcal{P}$ such that $J(\tilde{u},\tilde{v}) < 0$ and $\|(\tilde{u},\tilde{v})\| > \rho.$
\end{enumerate}
\end{Lem}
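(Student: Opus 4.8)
The plan is to verify the two geometric conditions separately, using the growth bound \eqref{bdd1} on $F$ together with the properties of $m$ in $(m2)$ and the Adams--Moser--Trudinger inequality of Theorem \ref{MTST0}.

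\textbf{Part (i).} The aim is to show $J(u,v)\geq\sigma>0$ for $\|(u,v)\|=\rho$ with $\rho$ small. First I would use $(m2)$ (in either the non-degenerate branch $m(t)\geq c_0$ or the degenerate branch $m(t)\geq t^z$) to bound the primitive $M$ from below: in the non-degenerate case $M(t)\geq c_0 t$, while in the degenerate case $M(t)\geq \frac{1}{z+1}t^{z+1}$. Hence $\frac1n M(\|(u,v)\|^n)\gtrsim \|(u,v)\|^n$ or $\gtrsim\|(u,v)\|^{n(z+1)}$. For the Choquard term, I would apply the Hardy--Littlewood--Sobolev inequality (Proposition \ref{HLS}) with $t=r=\frac{2n}{2n-\mu}$ to get
\[
\int_\Omega\!\left(\int_\Omega\frac{F(y,u,v)}{|x-y|^\mu}dy\right)F(x,u,v)\,dx \leq C(n,\mu)\,\|F(\cdot,u,v)\|_{L^{\frac{2n}{2n-\mu}}(\Omega)}^2 .
\]
Then I would estimate the $L^{\frac{2n}{2n-\mu}}$-norm using \eqref{bdd1}: the polynomial part $|s|^k+|t|^k$ contributes $O(\|(u,v)\|^{2k})$ via Sobolev embedding $W_0^{1,n}\hookrightarrow L^q$ for all $q<\infty$, and since $k\geq1$ and (choosing $k>1$) $2k>n$ when convenient — more carefully, one takes $\rho$ small so that the lowest power dominates. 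The exponential part is controlled by splitting off $(|s|^p+|t|^p)$ via H\"older and using Theorem \ref{MTST0}: since on the sphere $\|(u,v)\|=\rho$ we have $|u|^{\frac{n}{n-1}}+|v|^{\frac{n}{n-1}} = \rho^{\frac{n}{n-1}}\big(|\tfrac{u}{\rho}|^{\frac{n}{n-1}}+|\tfrac{v}{\rho}|^{\frac{n}{n-1}}\big)$, choosing $\rho$ small enough that $(1+\epsilon)\rho^{\frac{n}{n-1}}< \frac{\zeta_{n,1}}{2_{n,1}} = \frac{\alpha_n}{2_n}$ makes the exponential integral uniformly bounded, so that part contributes $O(\|(u,v)\|^{2p})$ with $p>1$. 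Collecting terms, $J(u,v)\geq c\,\rho^{\beta_1} - C\rho^{\beta_2}$ with $\beta_2>\beta_1$ (here one must check the exponents: $\beta_1 = n$ or $n(z+1)$, and $\beta_2 = 2\min\{k,p\}$, which can be made larger than $\beta_1$ by choosing $k,p$ appropriately in \eqref{bdd1}, using $l+1>\max\{n-1,\ldots\}$ from $(f3)$ to allow $k>$ the relevant threshold). Taking $\rho$ small gives $J(u,v)\geq\sigma>0$.

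\textbf{Part (ii).} Here I would fix a nonzero $(\phi,\psi)\in\mathcal{P}$ with $\phi,\psi\geq0$, $\phi,\psi\not\equiv0$, and show $J(t\phi,t\psi)\to-\infty$ as $t\to+\infty$; then $(\tilde u,\tilde v)=(t\phi,t\psi)$ for large $t$ does the job. The key is a lower bound on the growth of $F$: from the second limit in $(f2)$, for any $\epsilon>0$ and $x$ in a fixed compact subset where $\phi,\psi$ are bounded below, $F(x,t\phi,t\psi)\geq C_\epsilon \exp\big((1-\epsilon)t^{\frac{n}{n-1}}(\phi^{\frac{n}{n-1}}+\psi^{\frac{n}{n-1}})\big)$ for large $t$, so the Choquard double integral grows at least like $\exp(c\,t^{\frac{n}{n-1}})$. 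Meanwhile, by the upper bound in $(m2)$, $M(t^n\|(\phi,\psi)\|^n)\leq C(1 + t^{n(r+1)})$, which is only polynomial in $t$. Since the double-exponential-type growth of the Choquard term dominates any polynomial, $J(t\phi,t\psi)\to-\infty$, and in particular $J(\tilde u,\tilde v)<0$ with $\|(\tilde u,\tilde v)\|>\rho$ for $t$ large enough.

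\textbf{Main obstacle.} The delicate point is Part (i): one must choose the exponents $k,p$ in \eqref{bdd1} and the radius $\rho$ simultaneously so that (a) $(1+\epsilon)\rho^{\frac{n}{n-1}}$ stays below the critical threshold $\frac{\alpha_n}{2_n}$ for Theorem \ref{MTST0} to apply with a uniform bound, and (b) the polynomial powers coming from the Choquard term strictly exceed the power $n$ (or $n(z+1)$) coming from $M$, so the $+c\rho^{\beta_1}$ term wins for small $\rho$. This balancing uses the freedom in \eqref{bdd1} (valid for all $1\leq k<l+1$ and all $p\geq1$) and the lower bound $l+1>\max\{n-1,\tfrac{n(r+1)}{2}\}$ from $(f3)$; the degenerate case $m(t)\geq t^z$ requires the additional constraint $l>\tfrac{n(z+1)}{2}$, which is exactly why $(f3)$ lists that extra term. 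Part (ii) is comparatively routine once the lower bound on $F$ from $(f2)$ is in hand.
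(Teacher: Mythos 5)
Your Part (i) follows the paper's argument essentially verbatim: Hardy--Littlewood--Sobolev with exponent $\tfrac{2n}{2n-\mu}$, the splitting of $F$ via \eqref{bdd1} into a polynomial piece of order $\|(u,v)\|^{2k}$ and an exponential piece of order $\|(u,v)\|^{2p}$ controlled by Theorem \ref{MTST0} for $\rho$ small, and then $2k,2p>n$ (resp. $>n(z+1)$) guaranteed by $(f3)$; your threshold $(1+\epsilon)\rho^{\frac{n}{n-1}}<\frac{\alpha_n}{2_n}$ is missing the harmless factor $\tfrac{4n}{2n-\mu}$ that the paper carries, but you absorb it by ``$\rho$ small enough.'' Part (ii) is where you genuinely diverge: the paper does not invoke the exponential lower bound from $(f2)$ at all. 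Instead it uses the monotonicity in $(f3)$ to extract a purely polynomial lower bound $F(x,t,s)\geq p_1|t|^{K}+p_2|s|^{K}-p_3$ with $K>\tfrac{n(r+1)}{2}$, so the Choquard term grows like $\xi^{2K}$, which already beats the upper bound $M(\xi^n)\lesssim \xi^{n(r+1)}$ from $(m2)$ --- this is exactly why $(f3)$ requires $l>\tfrac{n(r+1)}{2}$. Your exponential route also works and is in some sense overkill, but note one point of rigor: $(f2)$ bounds $h_1,h_2$ (hence $f_1,f_2$) from below, not the function $h$ appearing in $F=h\exp(\cdot)$, so the claimed lower bound on $F$ does not follow directly from $(f2)$; you must first integrate, e.g. $F(x,t,s)\geq\int_{t/2}^{t}f_1(x,\tau,s)\,d\tau$, before inserting the exponential lower bound on $f_1$. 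With that one-line repair both approaches are sound; the paper's has the advantage of making transparent which hypothesis ($(f3)$ versus $(f2)$) is actually responsible for the mountain-pass geometry.
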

\begin{proof}
Let $(u,v) \in \mathcal{P}$  such that $\|(u,v)\|= \rho$ (to be determined later). Then from \eqref{bdd1}, Proposition \ref{HLS}, Sobolev embedding, H\"older inequality, we have  for any $\epsilon >0$, $p \geq 1$ and $1 \leq k <l+1$ we have
\begin{equation*}
\begin{split}
&\int_{\Omega} \left( \int_{\Omega} \frac{F(y,u,v)}{|x-y|^{\mu}} dy\right) F(x,u,v) dx  \leq C(n, \mu) \|F(x,u,v)\|^2_{L^{\frac{2n}{2n-\mu}}(\Omega)}\\
& \leq \bigg[ C_1 \left( \int_{\Omega} |u|^k + |v|^k \right)^{\frac{2n}{2n-\mu}}\\
& \ \ \ \ \ \ \ \ + C_2 \left( \int_{\Omega} (|u|^p + |v|^p)^{\frac{2n}{2n-\mu}}  \exp\left(\frac{(1+\epsilon) 2n}{2n-\mu}(|u|^{\frac{n}{n-1}} + |v|^{\frac{n}{n-1}} )\right)\right)\bigg]^{\frac{2n-\mu}{n}}\\
& \leq \bigg[ C_1 \left( \|(u,v)\|\right)^{\frac{2nk}{2n-\mu}}\\
& \ \ \ \ \ \ \ \ + C_2 \left( \|(u,v)\| \right)^{\frac{2np}{2n-\mu}} \left(\int_{\Omega} \exp\left(\frac{(1+\epsilon)4 n \|(u,v)\|^{\frac{n}{n-1}}}{2n-\mu}\left(\frac{|u|^{\frac{n}{n-1}} + |v|^{\frac{n}{n-1}}}{\|(u,v)\|^{\frac{n}{n-1}}} \right)\right)\right)^{\frac{1}{2}} \bigg]^{\frac{2n-\mu}{n}}.
\end{split}
\end{equation*}
If we choose $\epsilon>0$ and $\rho$ such that $\frac{4n(1+\epsilon) \rho^{\frac{n}{n-1}}}{2n-\mu} \leq \frac{\alpha_n}{2_n}$, then by using Theorem \ref{MTST0} in above we obtain,
\begin{equation}\label{est11}
\begin{split}
\int_{\Omega} \left( \int_{\Omega} \frac{F(y,u,v)}{|x-y|^{\mu}} dy\right) F(x,u,v) dx  \leq C_3 \|(u,v)\|^{2k} + C_4 \|(u,v)\|^{2p}.
\end{split}
\end{equation}
Niw by using \eqref{est11} and $(m2)$ (for non-degenerate Kirchhoff term), we get
\begin{equation*}
J(u,v) \geq c_0 \frac{\|(u,v)\|^n}{n} - C_3 \|(u,v)\|^{2k} - C_4 \|(u,v)\|^{2p}.
\end{equation*}
So choosing  $k >n/2$, $p>n/2$ and
$\rho$ small enough such that  $J(u,v) \geq \sigma$ when $\|(u,v)\| =\rho$ for some $\sigma>0$ depending on $\rho.$ Similarly for degenerate Kirchhoff term we get,
\begin{equation*}
J(u,v) \geq \frac{\|(u,v)\|^{n(z+1)}}{n} - C_3 \|(u,v)\|^{2k} - C_4 \|(u,v)\|^{2p}
\end{equation*}
and we can choose $2k >n(z+1)$, $2p>n (z+1)$ and
$\tilde{\rho}$ small enough such that $\|(u,v)\| =\tilde{\rho}$ and $J(u,v) \geq \tilde{\sigma}$ for some $\tilde{\sigma}$ depending upon $\tilde{\rho}.$\\
Furthermore, again by using $(m2)$, there exist constant $c_i$, $i=1,2,3$ such that
\begin{equation}\label{est12}
M(\|(u,v)\|^n)\leq\left\{
\begin{split}
& \frac{c_1}{(r+1)} \|(u,v)\|^{n(r+1)} + c_2 \|(u,v)\|^n + c_3 ,\;  \ \ r \neq 1,\\
&  c_1 \ln(\|(u,v)\|^{n}) + c_2 \|(u,v)\|^n + c_3 \; \ \ \ \ \ \ \ \ \ \  r=1,\\
\end{split}
\right.
\end{equation}
for $\|(u,v)\| \geq \tilde{t}$ where
\begin{equation*}
c_3= \left\{
\begin{split}
& M(\tilde{t}) - c_2 \tilde{t} - \frac{c_1}{(r+1)} \tilde{t}^{r+1},\;  \ \ r \neq 1,\\
&  M(\tilde{t})- c_2 \tilde{t} - c_1 \ln(\tilde{t}) \; \ \ \ \ \ \ \ \ \ r=1.\\
\end{split}
\right.
\end{equation*}
Let $(u_0,v_0) \in \mathcal{P}$ such that $u_0 \geq 0, v_0 \geq 0$ and $\|(u_0,v_0)\|=1$. Then by using $(f3)$, there exists $p_i \geq 0$, $i=1,2,3$ and $ K> \frac{n(r+1)}{2}$ such that $F(x,t,s) \geq p_1 |t|^{K} + p_2|s|^{K} - p_3$ and
\begin{equation}\label{est13}
\int_{\Omega} \left( \int_{\Omega} \frac{F(y,\xi u_0,\xi v_0)}{|x-y|^{\mu}} dy\right) F(x,\xi u_0, \xi v_0) dx \geq C_5 \xi^{2K} - C_6 \xi^{K} + C_7.
\end{equation}
Finally by combining \eqref{est12} and \eqref{est13}, we obtain $J(\xi u_0, \xi v_0) \to -\infty$ as $\xi \to \infty.$ Hence there exists $(\tilde{u},\tilde{v}) \in \mathcal{P}$ such that $J(\tilde{u},\tilde{v}) <0$ and $\|(\tilde{u},\tilde{v})\| > \rho.$
\end{proof}

\begin{Lem}\label{PS-bdd}
Every Palais-Smale sequence is bounded in $\mathcal{P}.$
\end{Lem}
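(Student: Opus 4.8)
The plan is to run the classical Ambrosetti--Rabinowitz truncation argument, but adapted so that both the Kirchhoff term and the doubly nonlocal Choquard term are kept under control. Let $(u_k,v_k)\subset\mathcal{P}$ satisfy $J(u_k,v_k)\to c$ and $\|J'(u_k,v_k)\|_{\mathcal{P}^\ast}=:\epsilon_k\to 0$; set $t_k:=\|(u_k,v_k)\|^n$ and abbreviate
\[
A_k:=\int_\Omega\Big(\int_\Omega\tfrac{F(y,u_k,v_k)}{|x-y|^\mu}\,dy\Big)F(x,u_k,v_k)\,dx,\qquad
B_k:=\int_\Omega\Big(\int_\Omega\tfrac{F(y,u_k,v_k)}{|x-y|^\mu}\,dy\Big)\big(f_1(x,u_k,v_k)u_k+f_2(x,u_k,v_k)v_k\big)\,dx .
\]
By $(f1)$ one has $F\ge 0$, hence $A_k,B_k\ge 0$, and testing $J'(u_k,v_k)$ against $(u_k,v_k)$ gives $m(t_k)t_k-B_k=\langle J'(u_k,v_k),(u_k,v_k)\rangle$, so $|m(t_k)t_k-B_k|\le\epsilon_k t_k^{1/n}$.

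The decisive point is a pointwise super-quadratic estimate extracted from $(f3)$: since $\tau\mapsto f_1(x,\tau,s)\tau^{-l}$ is nondecreasing and $F(x,0,s)=0$, integrating from $0$ to $t$ yields $F(x,t,s)\le\frac{t}{l+1}f_1(x,t,s)$, and symmetrically $F(x,t,s)\le\frac{s}{l+1}f_2(x,t,s)$; adding these,
\[
f_1(x,t,s)\,t+f_2(x,t,s)\,s\ \ge\ 2(l+1)\,F(x,t,s)\qquad\text{for all }(x,t,s).
\]
Because the Riesz kernel and $F$ are nonnegative, integrating this inequality against $\big(\int_\Omega F(y,u_k,v_k)|x-y|^{-\mu}\,dy\big)\,dx$ yields $B_k\ge 2(l+1)A_k$. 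This is precisely the mechanism that defuses the double integral: once $F$ is pointwise comparable to $f_1 t+f_2 s$, the nonlocality plays no further role.

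Next I would form $J(u_k,v_k)-\tfrac1\theta\langle J'(u_k,v_k),(u_k,v_k)\rangle$ with $\theta:=2(l+1)$, noting $\theta>2n$ since $(f3)$ requires $l>n-1$. This equals $\tfrac1n M(t_k)-\tfrac1\theta m(t_k)t_k-\tfrac12 A_k+\tfrac1\theta B_k$, and by the previous step $-\tfrac12 A_k+\tfrac1\theta B_k\ge\big(\tfrac{2(l+1)}{\theta}-\tfrac12\big)A_k=\tfrac12 A_k\ge0$. For the Kirchhoff part, $(m3)$ gives $M(t)\ge\tfrac12 m(t)t$ (integrate $m(s)/s\ge m(t)/t$ over $s\le t$), hence $\tfrac1n M(t_k)-\tfrac1\theta m(t_k)t_k\ge\big(\tfrac1n-\tfrac2\theta\big)M(t_k)$ with $\tfrac1n-\tfrac2\theta>0$, while $(m2)$ forces $M(t_k)\ge c_0 t_k$ (non-degenerate) or $M(t_k)\ge\frac{t_k^{z+1}}{z+1}$ (degenerate), so $M(t_k)\ge c\,t_k$ once $t_k\ge 1$. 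Since $J(u_k,v_k)\le c+1$ and $-\tfrac1\theta\langle J',(u_k,v_k)\rangle\le\tfrac{\epsilon_k}{\theta}t_k^{1/n}$ for large $k$, we obtain $c+1+o(1)\,t_k^{1/n}\ge c'\,t_k$; as $t_k^{1/n}=o(t_k)$ this forces $\{t_k\}$, hence $(u_k,v_k)$, to stay bounded in $\mathcal{P}$.

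The main obstacle is conceptual rather than computational: making the doubly nonlocal quantity $A_k$ cooperate with the test-function identity. The resolution rests on (i) the pointwise inequality $f_1t+f_2s\ge 2(l+1)F$ coming from $(f3)$ and (ii) the positivity of the Riesz potential, together with the quantitative constraint $l>n-1$, which is exactly what makes $\theta=2(l+1)>2n$ and keeps $\tfrac1n M(t_k)-\tfrac1\theta m(t_k)t_k$ coercive. If one prefers not to assume $F(x,0,s)=F(x,t,0)=0$, the argument survives with the harmless correction that $\int_\Omega\big(\int_\Omega F(y,u_k,v_k)|x-y|^{-\mu}\,dy\big)\,dx\le C\|F(u_k,v_k)\|_{L^1(\Omega)}\le C'\sqrt{A_k}$, using $A_k\ge(\operatorname{diam}\Omega)^{-\mu}\|F(u_k,v_k)\|_{L^1(\Omega)}^2$ and Young's inequality to absorb the lower-order term.
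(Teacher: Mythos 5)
Your proof is correct and follows essentially the same route as the paper: both form $J(u_k,v_k)-\tfrac1\theta\langle J'(u_k,v_k),(u_k,v_k)\rangle$, use the Ambrosetti--Rabinowitz-type inequality $f_1t+f_2s\ge 2\eta F$ extracted from $(f3)$ (the paper's $\eta>n/2$ is your $l+1$) to make the nonlocal contribution nonnegative, and then combine $(m3)$ (giving $M(t)\ge\tfrac12 m(t)t$) with $(m2)$ for coercivity of the Kirchhoff part. Your version merely makes explicit the integration step behind $\eta F\le tf_1$ and the reduction $B_k\ge\theta A_k$, which the paper states without derivation.
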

\begin{proof}
Let $(u_k, v_k)$ be a Palais-Smale sequence such that $J(u_k,v_k) \to c$ and $J'(u_k,v_k) \to 0$ as $k \to \infty$ for some $c \in \mathbb{R}$. Therefore we have:
\begin{equation}\label{est14}
\begin{split}
\bigg|\frac{M(\|(u_k,v_k)\|^n)}{n}-\frac{1}{2} \int_{\Omega} \left( \int_{\Omega} \frac{F(y,u_k,v_k)}{|x-y|^{\mu}} dy\right) F(x,u_k,v_k) dx\bigg| \to c
\end{split}
\end{equation}
and
\begin{equation}\label{est15}
\begin{split}
&\bigg|m(\|(u_k,v_k)\|^n) \left( \int_{\Omega} |\nabla u_k|^{n-2} \nabla u_k \nabla \phi dx + \int_{\Omega} |\nabla v_k|^{n-2} \nabla v_k \nabla \psi dx \right)\\
 & - \int_{\Omega} \left( \int_{\Omega} \frac{F(y,u_k,v_k)}{|x-y|^{\mu}} dy\right) (f_1(x,u_k,v_k) \phi + f_2(x,u_k,v_k) \psi) dx \bigg| \leq \epsilon_k \|(\phi, \psi)\|.
\end{split}
\end{equation}
Now by using $(f3)$ and $(m3)$, there exists $\eta >\frac{n}{2}, \theta \geq 2n$ such that $$\eta F(x,t,s) \leq t f_{1}(x,t,s) \ \text{and} \ \eta F(x,t,s) \leq s f_{2}(x,t,s) \ \text{for all} \ (x,t,s) \in \Omega \times \mathbb{R}^2$$ and
$$
\frac{1}{n} M(t)- \frac{1}{\theta} m(t) t \ \text{in nonnegative and nondecreasing for} \ t \geq 0.$$
Then by taking $\phi=u_k$ and $\psi= v_k$ in \eqref{est15} along with $(m2)$ (for both degenerate and non-degenerate Kirchhoff terms) and above inequalities, we obtain
\begin{equation}\label{est16}
\begin{split}
J(u_k,v_k) &- \frac{\langle J'(u_k, v_k), (u_k, v_k)\rangle }{ 4 \eta} = \frac{M(\|(u_k,v_k)\|^n)}{n} - \frac{m(\|(u_k,v_k)\|^n)}{4 \eta} \|(u_k, v_k)\|^n\\
&+ \frac{1}{4 \eta} \int_{\Omega} \left( \int_{\Omega} \frac{F(y,u_k,v_k)}{|x-y|^{\mu}} dy\right) (f_1(x,u_k,v_k)  u_k + f_2(x,u_k,v_k) v_k - 2 \eta F(x,u_k,v_k)) dx \\
& \geq \frac{M(\|(u_k,v_k)\|^n)}{n} - \frac{m(\|(u_k,v_k)\|^n)}{4 \eta} \|(u_k, v_k)\|^n \\
&\geq \left(\frac{1}{2n} - \frac{1}{4 \eta}\right) m(\|(u_k, v_k)\|^n) \|(u_k, v_k)\|^n \\
&\geq \left\{
\begin{split}
& c_0 \left(\frac{1}{2n} - \frac{1}{4 \eta}\right) \|(u_k, v_k)\|^n \\
& \ \quad \quad \text{or} \\
&  \left(\frac{1}{2n} - \frac{1}{4 \eta}\right) \|(u_k, v_k)\|^{n+z}.
\end{split}
\right.
\end{split}
\end{equation}
Also, from \eqref{est14} and \eqref{est15}, we get for some constant $C>0$
\begin{equation}\label{est17}
J(u_k,v_k)- \frac{\langle J'(u_k, v_k), (u_k, v_k)\rangle }{ 4 \eta} \leq C \left( 1 + \epsilon_k \frac{\|(u_k,v_k)\|}{ 4 \eta}\right).
\end{equation}
Therefore, by combining \eqref{est16} and \eqref{est17}, we obtain $\{(u_k, v_k)\}$ is bounded in $\mathcal{P}.$
\end{proof}

\begin{Lem}\label{weakconv}
Let $\{(u_k,v_k)\}$ be a Palais-Smale sequence then up to a subsequence
\begin{equation*}
      \left.
\begin{alignedat}{2}
 & \quad \quad \quad \quad \quad \quad |\nabla u_k|^{n-2} \nabla u_k  \rightharpoonup |\nabla u|^{n-2} \nabla u \\
& \quad \quad \quad \quad \quad \quad |\nabla v_k|^{n-2} \nabla v_k  \rightharpoonup |\nabla v|^{n-2} \nabla v
 \end{alignedat}
     \right\} \text{weakly in} \ W_0^{1,n}(\Omega).
\end{equation*}
\end{Lem}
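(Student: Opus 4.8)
The plan is a two-stage argument: first upgrade the weak convergence coming from Lemma \ref{PS-bdd} to strong convergence of the gradients (hence a.e.\ convergence), by exploiting the strict monotonicity of the operator $\xi\mapsto|\xi|^{n-2}\xi$ together with the Palais--Smale condition; then deduce the asserted weak convergence of the stress fields $|\nabla u_k|^{n-2}\nabla u_k$ from a.e.\ convergence and boundedness. For the preliminaries: by Lemma \ref{PS-bdd} the sequence $\{(u_k,v_k)\}$ is bounded in $\mathcal{P}$, so after passing to a subsequence $(u_k,v_k)\rightharpoonup(u,v)$ in $\mathcal{P}$, and by the Sobolev embedding (compact into $L^q(\Omega)$ for every $q\in[1,\infty)$) also $u_k\to u$, $v_k\to v$ strongly in every $L^q(\Omega)$ and a.e.\ in $\Omega$, together with $\|(u_k,v_k)\|\to\ell$ for some $\ell\geq0$. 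If $\ell=0$ the sequence converges strongly to $(0,0)$ and there is nothing to prove, so we may assume $\ell>0$; then by $(m2)$ and the continuity of $m$ we get $m(\|(u_k,v_k)\|^n)\to m(\ell^n)>0$, hence $m(\|(u_k,v_k)\|^n)\geq\delta>0$ along the sequence.

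The core step is to prove
\[
I_k:=\int_\Omega\left(|\nabla u_k|^{n-2}\nabla u_k-|\nabla u|^{n-2}\nabla u\right)\cdot\nabla(u_k-u)\,dx\longrightarrow0,
\]
and the analogous statement with $v$. Testing the relation $J'(u_k,v_k)\to0$ with $(\phi,\psi)=(u_k-u,0)$, whose norm is bounded, yields
\[
m(\|(u_k,v_k)\|^n)\int_\Omega|\nabla u_k|^{n-2}\nabla u_k\cdot\nabla(u_k-u)\,dx=\int_\Omega\left(\int_\Omega\frac{F(y,u_k,v_k)}{|x-y|^\mu}\,dy\right)f_1(x,u_k,v_k)(u_k-u)\,dx+o(1).
\]
Since $\nabla u_k\rightharpoonup\nabla u$ in $L^n(\Omega;\mathbb{R}^n)$, the term $\int_\Omega|\nabla u|^{n-2}\nabla u\cdot\nabla(u_k-u)\,dx\to0$, so it suffices to show the right-hand side above tends to $0$ and then divide by $m(\|(u_k,v_k)\|^n)\geq\delta$. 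To handle the right-hand side I would set $g_k(x):=\int_\Omega\frac{F(y,u_k,v_k)}{|x-y|^\mu}\,dy$ and show that $\{g_k\}$ is bounded in $L^{2n/\mu}(\Omega)$ and $\{f_1(\cdot,u_k,v_k)\}$ is bounded in $L^{s}(\Omega)$ for every $s<\infty$: the bound on $g_k$ follows from the Hardy--Littlewood--Sobolev inequality (Proposition \ref{HLS}) once $\|F(\cdot,u_k,v_k)\|_{L^{2n/(2n-\mu)}(\Omega)}$ is bounded, and these two boundedness statements follow in turn from the growth estimate \eqref{bdd1}, H\"older's inequality, and the product-space Moser--Trudinger inequality (Theorem \ref{MTST0}), provided $\epsilon$ is chosen small and the sequence stays in the range where the exponential integrals are uniformly controlled (as in the choice of $\rho$ in Lemma \ref{MPG}). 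Picking then $q\in(1,\infty)$ with $\frac{\mu}{2n}+\frac1s+\frac1q=1$ and invoking H\"older's inequality together with $u_k-u\to0$ strongly in $L^q(\Omega)$ gives $\int_\Omega g_kf_1(x,u_k,v_k)(u_k-u)\,dx\to0$, and hence $I_k\to0$.

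Finally, $I_k\geq0$ by monotonicity, and since $n\geq2$ the elementary inequality $(|a|^{n-2}a-|b|^{n-2}b)\cdot(a-b)\geq C_n|a-b|^n$ gives $\nabla u_k\to\nabla u$ in $L^n(\Omega;\mathbb{R}^n)$, in particular a.e.\ in $\Omega$ along a further subsequence; the same applies to $v_k$. Hence $|\nabla u_k|^{n-2}\nabla u_k\to|\nabla u|^{n-2}\nabla u$ a.e.\ in $\Omega$, and since $\{|\nabla u_k|^{n-2}\nabla u_k\}$ is bounded in the reflexive space $L^{n/(n-1)}(\Omega;\mathbb{R}^n)$, a.e.\ convergence forces weak convergence to the a.e.\ limit, which is exactly the asserted convergence $|\nabla u_k|^{n-2}\nabla u_k\rightharpoonup|\nabla u|^{n-2}\nabla u$; likewise for $v_k$. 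The step I expect to be the main obstacle is precisely the estimate of the Choquard term: one must control simultaneously the convolution factor $g_k$ through Hardy--Littlewood--Sobolev and the exponential growth of $F$ and $f_1$ through the product Moser--Trudinger inequality of Theorem \ref{MTST0}, which forces one to keep the Palais--Smale sequence in the subcritical range so that the relevant exponential integrals stay uniformly bounded, while $(m2)$ is what keeps the Kirchhoff coefficient bounded away from zero.
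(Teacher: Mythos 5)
Your overall architecture (test $J'(u_k,v_k)\to 0$ against $(u_k-u,0)$, kill the Choquard term, use strict monotonicity of $\xi\mapsto|\xi|^{n-2}\xi$, then pass from a.e.\ convergence of gradients to weak convergence of the stress fields) is a reasonable template, and the last two steps are correct as stated. The genuine gap is the middle step: your claim that $\{f_1(\cdot,u_k,v_k)\}$ is bounded in $L^s(\Omega)$ for every $s<\infty$ (and that $\|F(\cdot,u_k,v_k)\|_{L^{2n/(2n-\mu)}}$ is bounded) "by Theorem \ref{MTST0} with $\epsilon$ small". Theorem \ref{MTST0} only gives a uniform bound on $\int_\Omega \exp\bigl(\Theta(|u_k|^{\frac{n}{n-1}}+|v_k|^{\frac{n}{n-1}})\bigr)$ when $\Theta\,\|(u_k,v_k)\|^{\frac{n}{n-1}}\le \alpha_n/2_n$. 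A Palais--Smale sequence is merely \emph{bounded} (Lemma \ref{PS-bdd}); its norm bound can exceed the Moser--Trudinger threshold, in which case no uniform exponential integrability at the required exponent $s>\frac{2n}{2n-\mu}$ is available. The remark "as in the choice of $\rho$ in Lemma \ref{MPG}" does not apply here: in Lemma \ref{MPG} one is free to choose $\|(u,v)\|=\rho$ small, whereas here the norm is dictated by the sequence. Indeed, if your argument went through, every bounded PS sequence would converge strongly in $\mathcal{P}$, making the delicate level estimate of Lemma \ref{cricest} and Step 3 of the existence proof superfluous — a strong signal that the global Moser--Trudinger bound cannot be invoked at this stage.

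The paper's proof supplies exactly the missing idea: a concentration--compactness localization. One lets $|u_k|^n+|\nabla u_k|^n$ and $|v_k|^n+|\nabla v_k|^n$ converge to Radon measures, observes that the set of points carrying mass $\ge\sigma_i$ is finite, and on small balls away from these points the \emph{local} energy $\|(u_k,v_k)\|_{\mathcal{Z}(B_{r_0/2}(x_0)\cap\Omega)}$ is below the threshold, so the local version of the inequality (Theorem \ref{MTST11} with $\lambda=0$) yields uniform $L^{\frac{2nq}{2n-\mu}}$ bounds on $f_i(x,u_k,v_k)$ there (estimate \eqref{est19}). The Choquard term is then handled by Vitali's theorem, the semigroup property of the Riesz potential, the a priori bounds \eqref{est22}, and a truncation into the sets $A,B,C,D$ using $(f4)$; covering a compact set $K\subset\overline{\Omega}\setminus(X_{\sigma_1}\cup X_{\sigma_2})$ by finitely many such balls gives convergence of the nonlinear terms away from finitely many points, which is enough to get a.e.\ convergence of $\nabla u_k$, $\nabla v_k$ and hence the asserted weak convergence of $|\nabla u_k|^{n-2}\nabla u_k$, $|\nabla v_k|^{n-2}\nabla v_k$. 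Note also that the lemma only claims this weak convergence, not the strong $L^n$ convergence of gradients your route would produce. To repair your proof you would have to restrict to PS sequences whose norm is already known to lie below the critical threshold, which is precisely what is \emph{not} known at this point of the argument.
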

\begin{proof}
From Lemma \ref{PS-bdd}, we know that every Palais-Smale sequence satisfies \eqref{est14} and \eqref{est15} and is bounded in $\mathcal{P}.$ So there exist $u,v \in W_0^{1,n}(\Omega)$ such that up to a subsequence
\begin{equation*}
      \left\{
\begin{alignedat}{2}
 {} & u_k  \rightharpoonup u, v_k  \rightharpoonup v \ {} &&\text{weakly in} \ W_0^{1,n}(\Omega). \\
& u_k  \to u, v_k \to v && \text{strongly in} \ L^q(\Omega) \ \forall q \geq 1 \  \text{and a.e. in} \ \Omega.
 \end{alignedat}
     \right.
\end{equation*}
Since $|u_k|^n+ |\nabla u_k |^n$ and $|v_k|^n+|\nabla v_k|^n$ is bounded in $L^1(\Omega)$, so there exist two radon measures $\mu_1, \mu_2$ and two functions $u_1, v_1 \in (L^{\frac{n}{n-1}}(\Omega))^n$ such that upto a subsequence
$$|u_k|^n+ |\nabla u_k|^n \to \mu_1 \ \text{and} \ |v_k|^n + |\nabla v_k|^n \to \mu_2  \ \text{in the sense of measure and}$$ $$|\nabla u_k|^{n-2} \nabla u_k \rightharpoonup u_1, |\nabla v_k|^{n-2} \nabla v_k \rightharpoonup v_1 \ \text{weakly in} \ (L^{\frac{n}{n-1}}(\Omega))^n \ \text{as} \ k \to \infty.$$
We set $\sigma_1, \sigma_2 >0$ such that $\frac{2n}{2n-\mu}(\sigma_1 + \sigma_2)^{\frac{1}{n-1}} < \frac{\alpha_n}{2}$ and $X_{\sigma_i}= \{ x \in \overline{\Omega}: \mu_i (B_r(x) \cap \overline{\Omega})) \geq \sigma_i), \ \text{for all} \  r>0\}$ for $i=1,2.$ Then $X_{\sigma_i}$ must be finite sets and we claim that for any open and relatively compact subset $K$ of $\overline{\Omega} \setminus (X_{\sigma_1} \cup X_{\sigma_2})$
\begin{equation}\label{Claim1}
\lim_{k \to \infty} \int_{K} \left( \int_{\Omega} \frac{F(y,u_k,v_k)}{|x-y|^\mu} dy\right) f_1(x,u_k,v_k) u_k \to \lim_{k \to \infty} \int_{K} \left( \int_{\Omega} \frac{F(y,u,v)}{|x-y|^\mu} dy\right) f_1(x,u,v) u
\end{equation}
and
\begin{equation}\label{Claim2}
\lim_{k \to \infty} \int_{K} \left( \int_{\Omega} \frac{F(y,u_k,v_k)}{|x-y|^\mu} dy\right) f_2(x,u_k,v_k) v_k \to \lim_{k \to \infty} \int_{K} \left( \int_{\Omega} \frac{F(y,u,v)}{|x-y|^\mu} dy\right) f_2(x,u,v) v.
\end{equation}
Let $x_0 \in K$ and $r_i >0$ be such that $\mu_i(B_{r_i}(x_0) \cap \overline{\Omega}) < \sigma_i$ and consider $\psi_i \in C^{\infty}(\Omega)$ satisfying $0 \leq \psi_i \leq 1$ for $x \in \Omega$, $\psi_i=1$ in $B_{\frac{r_i}{2}}(x_0) \cap \overline{\Omega}$ and $\psi_i =0$ in $\overline{\Omega} \setminus B_{r_i}(x_0)$ for $i=1,2$. Then
\begin{equation*}
\lim_{k \to \infty} \int_{B_{\frac{r_1}{2}}(x_0) \cap {\Omega}} |u_k|^n + |\nabla u_k|^n dx \leq \lim_{k\to \infty}\int_{B_{r_1}(x_0) \cap {\Omega}} (|u_k|^n + |\nabla u_k|^n)\psi_1 dx =\mu_1(B_{r_1}(x_0) \cap \overline{\Omega}) < \sigma_1
\end{equation*}
and
\begin{equation*}
\lim_{k \to \infty} \int_{B_{\frac{r_2}{2}}(x_0) \cap {\Omega}} |v_k|^n + |\nabla v_k|^n dx \leq \lim_{k \to \infty}\int_{B_{r_2}(x_0) \cap  {\Omega}} (|v_k|^n + |\nabla v_k|^n) \psi_2 dx = \mu_2(B_{r_2}(x_0) \cap \overline{\Omega}) < \sigma_2.
\end{equation*}
Then by choosing $k \in \mathbb{N}$ large enough and $r_0:= \min\{r_1,r_2\}$ we get
\begin{equation}\label{est18}
\|(u_k,v_k)\|^n_{\mathcal{Z}(B_{\frac{r_0}{2}}(x_0) \cap {\Omega})}:=\int_{B_{\frac{r_0}{2}}(x_0) \cap {\Omega}}\left( |u_k|^n + |\nabla u_k|^n +|v_k|^n + |\nabla v_k|^n\right) < (\sigma_1 + \sigma_2).
\end{equation}
Now by using \eqref{est18}, Theorem \ref{MTST11} with $\lambda=0$ and choosing $\epsilon>0$ small enough and $q>1$ such that $\frac{2nq}{2n-\mu} (1+\epsilon)(\sigma_1 + \sigma_2)^{\frac{1}{n-1}} \leq \frac{\alpha_n}{2}$ we get the following estimates for $i=1,2$
\begin{equation}\label{est19}
\begin{split}
& \int_{B_{\frac{r_0}{2}}(x_0) \cap {\Omega}}|f_i(x,u_k, v_k)|^{\frac{2nq}{2n-\mu}} dx\\
&=  \int_{B_{\frac{r_0}{2}}(x_0) \cap {\Omega}}|h_i(x,u_k, v_k)|^{\frac{2nq}{2n-\mu}}\exp\left(\frac{2nq}{2n-\mu}(|u_k|^\frac{n}{n-1} +|v_k|^\frac{n}{n-1})\right)dx \\
&  \leq C_\epsilon \int_{B_{\frac{r_0}{2}}(x_0) \cap {\Omega}}\exp\left( \frac{2nq (1+\epsilon)}{2n-\mu}(|u_k|^{\frac{n}{n-1}} +|v_k|^\frac{n}{n-1})\right)dx\\
& \leq C_\epsilon \int_{B_{\frac{r_0}{2}}(x_0) \cap {\Omega}}\exp\left(\frac{2nq}{2n-\mu}(1+\epsilon)(\sigma_1 +\sigma_2)^{\frac{1}{n-1}}\left(\frac{|u_k|^\frac{n}{n-1} + |v_k|^\frac{n}{n-1}}{\|(u_k,v_k)\|_{\mathcal{Z}(B_{\frac{r_0}{2}}(x_0) \cap {\Omega})}^{\frac{n}{n-1}}} \right)\right)dx\leq \tilde{C}_\epsilon
\end{split}
\end{equation}
for some constant $\tilde{C}_\epsilon>0.$ First we prove \eqref{Claim1}, a similar proof provides \eqref{Claim2}. Consider
\begin{equation*}
\begin{split}
 &\int_{B_{\frac{r_0}{2}}(x_0) \cap {\Omega}} \left|\left( \int_\Omega \frac{F(y,u_k,v_k)}{|x-y|^{\mu}}dy\right) f_1(x,u_k, v_k)u_k- \left( \int_\Omega \frac{F(y,u,v)}{|x-y|^{\mu}}dy\right) f_1(x,u,v)u \right|dx\\
 & \leq \int_{B_{\frac{r_0}{2}}(x_0) \cap {\Omega}} \left|\left( \int_\Omega \frac{F(y,u,v)}{|x-y|^{\mu}}dy\right) (f_1(x,u_k,v_k)u_k-f_1(x,u,v)u)\right|dx\\
 & \quad + \int_{B_{\frac{r_0}{2}}(x_0) \cap {\Omega}} \left|\left( \int_\Omega \frac{F(y,u_k,v_k)-F(y,u,v)}{|x-y|^{\mu}}dy\right) f_1(x,u_k,v_k)u_k\right|dx\\
 &:= I_1 + I_2 \;\text{(say)}.
\end{split}
\end{equation*}
From \eqref{bdd1}, \eqref{est19}, H\"older's inequality and asymptotic growth of $f_i$ we obtain that families $\{f_1(x,u_k,v_k) u_k\}$ and $\{f_2(x,u_k,v_k) v_k\}$ are equi-integrable over $B_{\frac{r_0}{2}}(x_0) \cap \overline{\Omega}$ and $\mu \in (0,n)$ gives
\begin{equation}\label{est20}
\int_\Omega \frac{F(y,u,v)}{|x-y|^{\mu}}dy \in L^\infty(\Omega).
\end{equation}
Then \eqref{est20} and Vitali's convergence theorem combined with pointwise convergence of \\ $f_1(x,u_k,v_k) u_k \to f_1(x,u,v) u$ implies $I_1 \to 0.$    Now we show that $I_2 \to 0$ as $k \to \infty.$
Then by using semigroup property of the Riesz potential (see \cite {MS}) and \eqref{est19} we get that for some constant $C>0$ independent of $k$
\begin{align*}
\begin{split}
&\int_{\Omega} \left(\int_{\Omega}\frac{F(y,u_k,v_k)-F(y,u,v)}{|x-y|^{\mu}} dy \right) \chi_{B_{\frac{r_0}{2}} \cap \overline{\Omega}}(x) f_1(x,u_k,v_k) u_k dx \\
&\leq \left(\int_{\Omega} \left( \int_{\Omega}\frac{|F(y,u_k,v_k)- F(y,u,v)| dy }{|x-y|^{\mu}}\right) |F(x,u_k,v_k)- F(x,u,v)| dx \right)^{\frac{1}{2}}\\
&\quad \times \left(\int_{\Omega} \left(\int_{\Omega} \chi_{B_{\frac{r_0}{2}} \cap \overline{\Omega}}(y) \frac{f_1(y,u_k,v_k) u_k}{|x-y|^{\mu}} dy \right) \chi_{B_{\frac{r_0}{2}} \cap \overline{\Omega}}(x) f_1(x,u_k,v_k) u_k dx \right)^{\frac{1}{2}}\\
& \leq C \left(\int_{\Omega} \left( \int_{\Omega}\frac{|F(y,u_k,v_k)- F(y,u,v)| dy }{|x-y|^{\mu}}\right) |F(x,u_k,v_k)- F(x,u,v)| dx \right)^{\frac{1}{2}}.
\end{split}
 \end{align*}
Now we claim that
\begin{align}\label{est21}
\lim_{k \to \infty} \int_{\Omega} \left(\int_{\Omega}\frac{|F(y,u_k,v_k)-F(y,u,v)|}{|x-y|^{\mu}} dy \right) |F(x,u_k,v_k)-F(x,u,v)|  dx =0.
\end{align}
From \eqref{est14} and , \eqref{est15} we get that there exists a constant $C_1, C_2>0$ (independent of $k$) such that
\begin{equation}\label{est22}
\begin{split}
&\int_\Omega \left(\int_\Omega\frac{F(y,u_k,v_k)}{|x-y|^\mu}dy\right)F(x,u_k,v_k)dx \leq C_1,\\
\int_\Omega &\left(\int_\Omega\frac{F(y,u_k,v_k)}{|x-y|^\mu}dy\right)(f_1(x,u_k,v_k)u_k + f_2(x,u_k,v_k) v_k)dx  \leq C_2.
\end{split}
\end{equation}
We argue as along equation $(3.19)$ in Lemma 3.4 in \cite{AGMS}.
Consider
\begin{equation*}
\begin{split}
& \int_{\Omega} \left(\int_{\Omega}\frac{|F(y,u_k,v_k)-F(y,u,v)|}{|x-y|^{\mu}} dy \right) |F(x,u_k,v_k)-F(x,u,v)|  dx\leq\\
& \int_\Omega\left(\int_\Omega\frac{|F(y,u_k,v_k)\chi_{A}(y)-F(y,u,v)\chi_{B}(y)}{|x-y|^\mu}dy\right)|F(x,u_k,v_k)\chi_{A}(x)-F(x,u,v)\chi_{B}(x)|dx\\
& + 2 \int_{\Omega} \left(\int_{\Omega}\frac{(F(y,u_k,v_k) \chi_A(y)+ F(y,u,v)\chi_B(y)+F(y,u,v) \chi_D(y))}{|x-y|^{\mu}} dy \right) F(x,u_k,v_k) \chi_C(x) dx\\
&  + 2 \int_{\Omega} \left(\int_{\Omega}\frac{(F(y,u_k,v_k)\chi_A(y)+ F(y,u,v) \chi_B(y))}{|x-y|^{\mu}} dy \right) F(x,u,v) \chi_D(x) dx\\
&+ \int_{\Omega} \left(\int_{\Omega}\frac{F(y,u_k,v_k) \chi_C(y)}{|x-y|^{\mu}} dy \right) F(x,u_k,v_k) \chi_C(x) dx\\
& + \int_{\Omega} \left(\int_{\Omega}\frac{F(y,u,v) \chi_D(y)}{|x-y|^{\mu}} dy \right) F(x,u,v) \chi_D(x) dx:= I_3 +I_4 +I_5+ I_6 +I_7.
\end{split}
\end{equation*}
where for a fixed $M>0$ $$A= \{x \in \Omega: |u_k| \leq M \ \text{and} \ |v_k| \leq M \}, \ \ B= \{x \in \Omega: |u| \leq M \ \text{and} \ |v| \leq M \},$$
$$C= \{x \in \Omega: |u_k| \geq M \ \text{or} \ |v_k| \geq M \} \ \ \text{and} \ D= \{x \in \Omega: |u| \geq M \ \text{or} \ |v| \geq M \}.$$
Now using \eqref{est22}, $(f4)$, semigroup property of the Riesz Potential we obtain $I_j= o(M)$ for $j = 4,\ldots,7$, when $M$ is large enough and from Lebesgue dominated convergence theorem we obtain $I_3 \to 0$ as $k \to \infty.$ Hence \eqref{est21} holds and $I_2 \to 0$ as $k \to \infty.$ Now to conclude \eqref{Claim1} and \eqref{Claim2}, we repeat this procedure over a finite covering of balls using the fact that $K$ is compact.
Now the remaining proof can be done by using the same arguments as in Lemma $3.4$ in \cite{AGMS}.
\end{proof}
\begin{Lem}\label{Choqest}
Let $\{(u_k,v_k)\}$ be a Palais-Smale sequence for the energy functional $J$. Then there exists $(u,v) \in \mathcal{P}$ such that upto a subsequence
\begin{equation*}
\int_{\Omega} \left(\int_{\Omega} \frac{F(x,u_k,v_k)}{|x-y|^\mu} dy \right) f_i(x,u_k,v_k) \phi dx \to \int_{\Omega} \left(\int_{\Omega} \frac{F(x,u,v)}{|x-y|^\mu} dy \right) f_i(x,u,v) \phi dx
\end{equation*}
for all $\phi \in C_c^{\infty}(\Omega)$ and $i=1,2$
and
\begin{equation*}
\left(\int_{\Omega} \frac{F(x,u_k,v_k)}{|x-y|^\mu} dy \right) F(x,u_k,v_k) \to  \left(\int_{\Omega} \frac{F(x,u,v)}{|x-y|^\mu} dy \right) F(x,u,v)  dx \ \ \text{in} \ L^1(\Omega).
\end{equation*}
\end{Lem}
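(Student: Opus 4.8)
The plan is to build everything on the material already assembled in the proof of Lemma \ref{weakconv}, where (essentially following \cite[Lemma~3.4]{AGMS}) one has: the strong convergence $u_k\to u$, $v_k\to v$ in $L^q(\Omega)$ for every $q\ge1$ and a.e.\ in $\Omega$; the finiteness of the concentration sets $X_{\sigma_1}\cup X_{\sigma_2}$; the local convergences \eqref{Claim1}--\eqref{Claim2} on every open relatively compact $K\subset\overline\Omega\setminus(X_{\sigma_1}\cup X_{\sigma_2})$; the vanishing limit \eqref{est21}; the $L^\infty$-bound \eqref{est20} for $\int_\Omega F(y,u,v)|\cdot-y|^{-\mu}dy$; and the uniform bounds \eqref{est22} coming from the Palais--Smale property together with boundedness (Lemma \ref{PS-bdd}). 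Write $F_k:=F(\cdot,u_k,v_k)$ and $F:=F(\cdot,u,v)$, extended by $0$ outside $\Omega$, and set $B(f,g):=\int_\Omega\int_\Omega f(x)g(y)|x-y|^{-\mu}\,dx\,dy$. Since $0<\mu<n$, the kernel $|x-y|^{-\mu}$ has strictly positive Fourier transform, so $B$ is a positive semidefinite bilinear form and the Cauchy--Schwarz inequality $B(|f|,|g|)\le B(|f|,|f|)^{1/2}B(|g|,|g|)^{1/2}$ is available; this is the main algebraic tool.

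For the $L^1(\Omega)$-convergence of the Choquard energy density I would split
\begin{equation*}
\Big(\!\int_\Omega\tfrac{F_k(y)}{|x-y|^\mu}dy\Big)F_k(x)-\Big(\!\int_\Omega\tfrac{F(y)}{|x-y|^\mu}dy\Big)F(x)=\Big(\!\int_\Omega\tfrac{(F_k-F)(y)}{|x-y|^\mu}dy\Big)F_k(x)+\Big(\!\int_\Omega\tfrac{F(y)}{|x-y|^\mu}dy\Big)(F_k-F)(x).
\end{equation*}
The $L^1$-norm of the first term is $\le B(|F_k-F|,|F_k|)\le B(|F_k-F|,|F_k-F|)^{1/2}B(|F_k|,|F_k|)^{1/2}$: its first factor is the square root of the left-hand side of \eqref{est21} and hence $\to0$, while its second factor is bounded by \eqref{est22}. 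The $L^1$-norm of the second term is, by \eqref{est20}, at most $\big\|\int_\Omega F(y,u,v)|\cdot-y|^{-\mu}dy\big\|_{L^\infty}\|F_k-F\|_{L^1(\Omega)}$, and $F_k\to F$ in $L^1(\Omega)$ follows from the a.e.\ convergence of $F_k$ together with equi-integrability: by \eqref{bdd1} the polynomial part is equi-integrable because $u_k\to u$, $v_k\to v$ in $L^k(\Omega)$, the exponential part is equi-integrable on balls avoiding the finite set $X_{\sigma_1}\cup X_{\sigma_2}$ by the local Adams--Moser--Trudinger estimate used in \eqref{est19} (Theorem \ref{MTST11} with $\lambda=0$), and a finite set is $L^1$-negligible; Vitali's theorem then closes this part.

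For the convergence tested against $\phi\in C_c^\infty(\Omega)$ I would use the same splitting, but with $F_k$ (resp.\ $F$) in the outer slot replaced by $f_i(\cdot,u_k,v_k)$ (resp.\ $f_i(\cdot,u,v)$). In the term carrying the potential difference, Proposition \ref{HLS}, the semigroup property of the Riesz potential (as in \cite{MS}), the local exponential bound \eqref{est19} for $f_i(\cdot,u_k,v_k)$ and the vanishing \eqref{est21} force it to $0$ after pairing with the bounded $\phi$; in the term carrying $f_i(\cdot,u_k,v_k)-f_i(\cdot,u,v)$ one uses \eqref{est20}, the a.e.\ convergence $f_i(\cdot,u_k,v_k)\to f_i(\cdot,u,v)$, the equi-integrability of $\{f_i(\cdot,u_k,v_k)\phi\}$ granted by \eqref{bdd1}, \eqref{est19} and the compactness of $\mathrm{supp}\,\phi$ in $\Omega$, and Vitali's theorem. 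Organizationally, one covers $\mathrm{supp}\,\phi$ by finitely many balls, invokes \eqref{Claim1}--\eqref{Claim2} on those meeting $\overline\Omega\setminus(X_{\sigma_1}\cup X_{\sigma_2})$, and absorbs the negligible set $X_{\sigma_1}\cup X_{\sigma_2}$; this reduces the argument to that of \cite[Lemma~3.4]{AGMS} adapted to the Cartesian-product space $\mathcal P$.

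The hard part is exactly the passage to the limit in the Choquard nonlinearity across the potential concentration set, where exponential integrability is \emph{not} uniform in $k$ and one cannot invoke Moser--Trudinger directly. This is what \eqref{est21} is designed to overcome: the truncation decomposition $I_3+\dots+I_7$ (with the sets $A,B,C,D$) used to prove it shows, via \eqref{est22} and $(f4)$, that the ``high'' parts contribute $o(M)$ uniformly in $k$, while the remaining ``low'' part converges by dominated convergence. Combining this with the bilinear Cauchy--Schwarz above is precisely what upgrades the merely weak convergence of the Choquard densities to the asserted strong $L^1$-convergence, which is the crux of the lemma and the step requiring the most care.
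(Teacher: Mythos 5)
Your overall strategy is the one the paper intends: it gives no proof of this lemma beyond the citation of Lemmas 3.5--3.6 of \cite{AGMS}, and the argument there is exactly the machinery already assembled in Lemma \ref{weakconv} (the splitting of the Choquard density, the bilinear Cauchy--Schwarz inequality via the semigroup property of the Riesz potential, \eqref{est20}--\eqref{est22}, the $A,B,C,D$ truncation driven by $(f4)$, and Vitali). Your treatment of the second assertion (the $L^1$ convergence of the energy density) is correct and is essentially the intended proof.

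Two steps are stated in a way that would fail if taken literally. First, you justify the equi-integrability of $\{F(\cdot,u_k,v_k)\}$ near $X_{\sigma_1}\cup X_{\sigma_2}$ by saying that ``a finite set is $L^1$-negligible''. Negligibility of the concentration set does not give equi-integrability there: loss of uniform integrability at finitely many points is precisely what concentration means, and \eqref{est19} says nothing on balls centered at those points. The correct mechanism (which you do identify in your last paragraph, so this is an imprecision rather than a missing idea) is the $(f4)$-based truncation: since $|x-y|^{-\mu}\geq(\operatorname{diam}\Omega)^{-\mu}$ on $\Omega\times\Omega$, the bounds \eqref{est22} yield a uniform bound on $\int_\Omega\bigl(f_1(x,u_k,v_k)u_k+f_2(x,u_k,v_k)v_k\bigr)dx$, and then $(f4)$ gives $\int_{\{u_k\geq M\ \text{or}\ v_k\geq M\}}F(x,u_k,v_k)\,dx\leq CM^{-q}$ uniformly in $k$; this, not negligibility, is what closes Vitali. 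Second, for the term tested against $\phi$ you invoke $B(|F_k-F|,|f_i(\cdot,u_k,v_k)\phi|)\leq B(|F_k-F|,|F_k-F|)^{1/2}B(|f_i\phi|,|f_i\phi|)^{1/2}$, but the factor $B(|f_i(\cdot,u_k,v_k)\phi|,|f_i(\cdot,u_k,v_k)\phi|)$ is \emph{not} known to be bounded uniformly in $k$ when $\operatorname{supp}\phi$ meets the concentration set: \eqref{est19} only controls $f_i$ in $L^{2nq/(2n-\mu)}$ away from $X_{\sigma_1}\cup X_{\sigma_2}$, and near those points $f_i(\cdot,u_k,v_k)$ is merely bounded in $L^1$, for which $B(g,g)$ can blow up. The safer route for this term is the one sketched above: deduce $F_k\to F$ and $f_i(\cdot,u_k,v_k)\to f_i(\cdot,u,v)$ in $L^1(\Omega)$ from the truncation argument, note that the Riesz potential then converges and stays equi-integrable, and use the uniform bound $\int_{\{u_k\geq M\}}\bigl(\int_\Omega F_k|x-y|^{-\mu}dy\bigr)f_i\,dx\leq C/M$ coming from \eqref{est22} to apply Vitali to the product directly.
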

\vspace{.2cm}
\noindent The proof of the above Lemma follows from similar arguments as in Lemma $3.5$ and Lemma $3.6$ in \cite{AGMS}.
\vspace{.4cm}\\
\noindent Now we define the Mountain pass critical level and associated Nehari Manifold as
\begin{equation*}
l^*= \inf_{\gamma \in \Gamma} \max_{t \in [0,1]} J(\gamma(t))\ \text{where} \ \Gamma= \{ \gamma \in C([0,1], \mathcal{P}): \gamma(0) =0, J(\gamma(1)) <0 \}
\end{equation*} and
$$N =\{ (u,v) \in (W_0^{1,n}(\Omega) \setminus \{0\} )^2 : \langle J'(u,v), (u,v) \rangle  =0\}.$$
\begin{Lem}\label{cricest}
Let $ l^{**} = \inf_{u \in N} J(u)$. Assume $(m3)$, $(f3)$ and for some $q>2$
\begin{equation}\label{mainassu}
\lim_{t,s \to \infty} \frac{(f_1(x,t,s)t + f_2(x,t,s)s) F(x,t,s)}{\exp( q (|t|^{\frac{n}{n-1}} + |s|^{\frac{n}{n-1}}))} = \infty \ \text{uniformly in} \ x \in \overline{\Omega}
\end{equation} holds then $$l^{*} \leq l^{**} \ \text{and} \ 0< l^* < \frac{1}{n} M\left(\left(\left(\frac{2n-\mu}{2n}\right)\frac{\alpha_n}{2_n} \right)^{n-1} \right).$$
\end{Lem}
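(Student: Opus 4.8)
The statement decomposes into $l^{*}\le l^{**}$, $l^{*}>0$, and the strict bound $l^{*}<\tfrac1n M(L)$ with $L:=\big(\tfrac{2n-\mu}{2n}\cdot\tfrac{\alpha_n}{2_n}\big)^{n-1}$, and I would treat them in this order. For the first, fix $(u,v)\in N$ and set $g(t):=J(tu,tv)$; the usual fibering-map analysis applies, its two ingredients being that $(m3)$ makes $t\mapsto t^{-(2l+2)}m(\|(tu,tv)\|^n)\|(tu,tv)\|^n$ strictly decreasing once $l>n-1$, while $(f3)$ (after integrating $f_1=\partial_tF$, $f_2=\partial_sF$ from the origin, which also yields $(l+1)F(x,t,s)\le\min\{tf_1(x,t,s),sf_2(x,t,s)\}$) makes the Choquard contribution to $t^{-(2l+2)}\langle J'(tu,tv),(tu,tv)\rangle$ non-increasing. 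Hence $\langle J'(tu,tv),(tu,tv)\rangle$ vanishes at exactly one $t$, which is the global maximum of $g$ and equals $1$ because $(u,v)\in N$. Combining the lower bound $F(x,t,s)\ge p_1t^K+p_2s^K-p_3$ from $(f3)$ with $2K>n(r+1)$ (and $2K>n(z+1)$ in the degenerate case) with the polynomial upper bound for $M$ coming from the upper estimate in $(m2)$ gives $g(t)\to-\infty$; choosing $T>1$ with $J(Tu,Tv)<0$, the path $\gamma(t):=(tTu,tTv)\in\Gamma$ satisfies $\max_{[0,1]}J\circ\gamma=\max_{[0,T]}g=g(1)=J(u,v)$. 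Taking the infimum over $N$ yields $l^{*}\le l^{**}$.

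The bound $l^{*}>0$ is immediate from the mountain-pass geometry: $M(0)=0$ gives $J(0,0)=0$, Lemma~\ref{MPG}(i) gives $\rho,\sigma>0$ with $J\ge\sigma$ on $\{\|(u,v)\|=\rho\}$, and any $\gamma\in\Gamma$, being continuous from $0$ to a point with $J<0$, must cross this sphere, so $\max_{[0,1]}J\circ\gamma\ge\sigma$.

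The strict upper bound is the heart of the matter; here I would argue by contradiction with the Moser concentrating sequence of the proof of Theorem~\ref{MTST00}. Take the radial $w_k$ supported in a ball $B_\rho(x_0)\subset\Omega$, with $\|w_k\|=1$ and $w_k\equiv\omega_{n-1}^{-1/n}(\log k)^{(n-1)/n}$ on $B_{\rho/k}(x_0)$, and $c_1,c_2>0$ with $c_1^n+c_2^n=1$, $c_1^{n/(n-1)}+c_2^{n/(n-1)}=2_n$, so that $\|(c_1w_k,c_2w_k)\|=1$. Suppose that $\max_{t\ge0}J(tc_1w_k,tc_2w_k)\ge\tfrac1n M(L)$ for infinitely many $k$, attained at $t=t_k$, and write $\mathbf{w}_k:=(t_kc_1w_k,t_kc_2w_k)$. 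Since $F\ge0$ (by $(f1)$) and $M$ is strictly increasing on $(0,\infty)$ (by the lower estimate in $(m2)$), evaluating $J$ at $t_k$ forces $t_k^n\ge L$, i.e.\ $t_k^{n/(n-1)}\ge\tfrac{2n-\mu}{2n}\cdot\tfrac{\alpha_n}{2_n}$. Differentiating $t\mapsto J(tc_1w_k,tc_2w_k)$ at $t_k$ and multiplying by $t_k$ gives
\begin{equation*}
m(t_k^n)\,t_k^n=\int_\Omega\int_\Omega\frac{F(y,\mathbf{w}_k)}{|x-y|^\mu}\big(f_1(x,\mathbf{w}_k)\,t_kc_1w_k+f_2(x,\mathbf{w}_k)\,t_kc_2w_k\big)\,dx\,dy .
\end{equation*}
On $B_{\rho/k}(x_0)$ the number $E_k:=|t_kc_1w_k|^{n/(n-1)}+|t_kc_2w_k|^{n/(n-1)}=2_n\,t_k^{n/(n-1)}\,\omega_{n-1}^{-1/(n-1)}\log k$ is constant and, using the lower bound on $t_k$ and $\alpha_n=n\,\omega_{n-1}^{1/(n-1)}$, satisfies $E_k\ge\tfrac{2n-\mu}{2}\log k\to\infty$. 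Combining the hypothesis \eqref{mainassu} with the upper bounds on $F$ and $f_i$ contained in \eqref{bdd1} (equivalently, $(f2)$) gives, on $B_{\rho/k}(x_0)$, pointwise bounds $F(\cdot,\mathbf{w}_k)\ge c\,e^{(q-1-\epsilon)E_k}$ and $f_1(\cdot,\mathbf{w}_k)t_kc_1w_k+f_2(\cdot,\mathbf{w}_k)t_kc_2w_k\ge c\,e^{(q-1-\epsilon)E_k}$ for small $\epsilon>0$. Restricting the double integral above to $B_{\rho/k}(x_0)\times B_{\rho/k}(x_0)$ and using $\int_{B_r}\int_{B_r}|x-y|^{-\mu}\,dx\,dy=c_{n,\mu}\,r^{2n-\mu}$ bounds its right-hand side below by $C\,e^{2(q-1-\epsilon)E_k}(\rho/k)^{2n-\mu}$, which is $\ge C'\,k^{(2n-\mu)(q-2-\epsilon)}$ and, should $t_k\to\infty$, grows like $C''\exp(c'''\,t_k^{n/(n-1)})$. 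Since its left-hand side $m(t_k^n)t_k^n$ grows at most polynomially in $t_k$ by the upper estimate in $(m2)$, and $q>2$ makes $(2n-\mu)(q-2-\epsilon)>0$ for $\epsilon$ small, this is a contradiction for large $k$. Hence $\max_{t\ge0}J(tc_1w_k,tc_2w_k)<\tfrac1n M(L)$ once $k$ is large; picking $T_k$ with $J(T_kc_1w_k,T_kc_2w_k)<0$, the path $t\mapsto(tT_kc_1w_k,tT_kc_2w_k)\in\Gamma$ and gives $l^{*}<\tfrac1n M(L)$.

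The main obstacle is this last step: one must pin down the fibering maximum $t_k$ from below (the threshold $t_k^n\ge L$) and rule out a runaway $t_k$ (via the polynomial growth of $M$), and then show the doubly nonlocal Choquard integral over the shrinking ball $B_{\rho/k}(x_0)$ blows up strictly faster than $|B_{\rho/k}(x_0)|$ times the Riesz-kernel scaling decays. The constant defining $L$ is calibrated precisely so that $t_k^{n/(n-1)}\ge\tfrac{2n-\mu}{2n}\tfrac{\alpha_n}{2_n}$ forces $E_k\ge\tfrac{2n-\mu}{2}\log k$, and the slack $q>2$ in \eqref{mainassu} is exactly what keeps the exponential gain ahead of the $k^{-(2n-\mu)}$ volume loss.
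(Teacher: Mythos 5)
Your proposal is correct and follows essentially the same route as the paper: the fibering map with $(m3)$, $(f3)$ for $l^{*}\le l^{**}$, the mountain-pass geometry for $l^{*}>0$, and a contradiction via the Moser concentrating sequence — deducing $t_k^{n/(n-1)}\ge\frac{2n-\mu}{2n}\frac{\alpha_n}{2_n}$ from the monotonicity of $M$, then playing \eqref{mainassu} against the $k^{-(2n-\mu)}$ Riesz-kernel scaling and $q>2$ — for the strict upper bound. Your extra step of splitting the product into individual lower bounds $F\ge c\,e^{(q-1-\epsilon)E_k}$ and $f_1t+f_2s\ge c\,e^{(q-1-\epsilon)E_k}$ is a careful (and welcome) way to justify the paper's displayed inequality, where the two factors are evaluated at different points $x$ and $y$.
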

\begin{proof}
Let $(u,v) \in N$ and $h:(0,\infty) \to \mathbb{R}$ such that $h(t)=J((tu,tv)).$ Then
$$h^\prime(t)= {m(\|(tu,tv)\|^n)} t^{n-1} \|(u,v)\|^n - \int_\Omega \left(\int_\Omega \frac{F(y,tu,tv)}{|x-y|^{\mu}}dy\right)(f_1(x,tu,tv)u + f_2(x,tu,tv)v)dx.$$
Since $(u,v) \in N$, we get
\begin{align*}
h^\prime(t) &= h^\prime(t) - t^{2n-1} \langle J'(u,v), (u,v)\rangle =  t^{2n-1} \left( \frac{m(\|(tu,tv)\|^n)}{t^n\|(u,v)\|^n}- \frac{m(\|(u,v)\|^n)}{\|(u,v)\|^n}\right) \|(u,v)\|^{2n}\\
&\quad + t^{2n-1}\bigg[\int_\Omega\left(  \int_\Omega \frac{F(y,u,v)}{|x-y|^{\mu}}dy \right){ (f_1(x,u,v) u + f_2(x,u,v) v)}dx \\
& \quad \quad \quad \quad \quad \quad \quad \quad \quad - \int_\Omega\left(\int_{\Omega}\frac{F(y,tu,tv)}{ t^{2n}|x-y|^{\mu}}dy\right) (f_1(x,tu,tv) tu +f_2(x,tu,tv) tv)~dx \bigg].
\end{align*}
Now $(f3)$ implies,
for any $(x,s) \in \Omega \times \mathbb{R}^+$, the map $r \mapsto r f_1(x,r,s) - n F(x,r,s)$  and for any $(x,r) \in \Omega \times \mathbb{R}^+$, the map $s \mapsto s f_2(x,r,s) - n F(x,r,s)$ is increasing on $\mathbb{R}^+$. Using this we get $r f_1(x,r,s) - n F(x,r,s) \geq 0$ and $s f_2(x,r,s) - n F(x,r,s) \geq 0$ for all $(x,r,s) \in \Omega \times \mathbb{R}^2$ which implies
$$ t \mapsto \frac{F(x,tu,tv)}{t^n} \ \text{is non-decreasing for } t>0.$$
Then for $0<t\leq1$, $x \in \Omega$ and by using $(m3)$ and $(f3)$, we obtain
\begin{align*}
h^\prime(t) &\geq  t^{2n-1} \left( \frac{m(\|(tu,tv)\|^n)}{t^n\|(u,v)\|^n}- \frac{m(\|(u,v)\|^n)}{\|(u,v)\|^n}\right) \|(u,v)\|^{2n}\\
& + t^{2n-1}\bigg[\int_\Omega \left( \int_{\Omega} \frac{F(y,u,v)}{|x-y|^\mu} dy\right) \left(\left( \frac{f_1(x,u,v)u}{u^{n}} -\frac{f_1(x,tu,tv)tu}{(tu)^{n}} \right) u^n(x)\right.\\
& \quad  \quad \quad \quad \quad \quad \quad \quad \quad \quad \quad \quad \quad + \left.\left( \frac{f_2(x,u,v)v}{v^{n}} -\frac{f_2(x,tu,tv)tv}{(tv)^{n}} \right) v^n(x)\right)
\bigg] \geq 0.
\end{align*}
This gives that $h^\prime(t)\geq 0$ for $0<t\leq1$ and $h^\prime(t)<0$ for $t>1$. Hence $J(u,v)= \max_{t\geq 0} J(tu,tv)$. Now we define $g:[0,1] \to \mathcal{P}$ as $g(t)=(t_0u, t_0 v)t$ where $t_0>1$ is such that $J((t_0 u, t_0v))<0$. So, $g \in \Gamma$ which gives
\[l^* \leq \max_{t\in[0,1]}J(g(t)) \leq \max_{t\geq 0} J(tu,tv)=J(u,v). \]
Since $u \in N$ is arbitrary, we get $l^* \leq l^{**}$.
For $u,v\not\equiv 0$, $J(tu,tv) \to -\infty$ as $t\to \infty$ (from Lemma~\ref{MPG}) and by definition $l^* \leq \max_{t\in[0,1]} J(tu,tv)$ for $(u,v) \in (W^{1,n}_0(\Omega)\backslash\{0\})^2$ satisfying $J(u,v)<0$. So, it is enough to prove that there exists a $(w_1,w_2) \in \mathcal{P}$ such that $\|(w_1,w_2)\|=1$ and
\begin{equation}\label{est23}
\max_{t\in[0,\infty)} J(tw_1,tw_2) < \frac{1}{n} M\left(\left(\left(\frac{2n-\mu}{2n}\right)\frac{\alpha_n}{2_n} \right)^{n-1} \right).
\end{equation}
To prove this, we consider the sequence of functions $\{(U_k, V_k)\}$ as defined in the proof of Theorem \ref{MTST00}
such that supp$(U_k)$, supp$(V_k) \subset B_\rho(0)$ and $\|(U_k, V_k)\| =1$ for all $k$. So we claim that there exists a $k \in \mathbb{N}$ such that \eqref{est23} is satisfied for $w_1= U_k$ and $w_2=V_k.$\\
We proceed by contradiction, suppose this is not true then for all $k \in \mathbb{N}$ there exists a $t_k>0$ such that \eqref{est23} does not holds {\it i.e.} $$\max_{t \in [0, \infty)} J(tU_k,tV_k) = J(t_k U_k, t_k V_k) \geq  \frac{1}{n} M\left(\left(\left(\frac{2n-\mu}{2n}\right)\frac{\alpha_n}{2_n} \right)^{n-1} \right).$$ Since $J((t U_k, t V_k)\to -\infty$ as $t\to \infty$ uniformly in $k$ therefore $\{t_k\}$ must be a bounded sequence in $\mathbb{R}$. Then from \eqref{EneFunc}, $\|U_k, V_k\|=1$ and monotonicity of $M$, we obtain
\begin{equation}\label{est24}
\left(\frac{2n-\mu}{2n}\right)\frac{\alpha_n}{2_n} \leq t_k^\frac{n}{n-1}.
\end{equation}
Since $\frac{d}{dt}(J((tU_k, tV_k))|_{t=t_k}=0$ and $ \int_{B_{\rho/k}}\int_{B_{\rho/k}} \frac{dxdy}{|x-y|^\mu} \geq C_{\mu, n} \left(\frac{\rho}{k}\right)^{2n-\mu}$ then by using
\eqref{mainassu}, for $k\in \mathbb N$ large enough we obtain
\begin{equation*}
\begin{split}
m(t_k^n)t_k^n &= \int_\Omega \left(\int_\Omega \frac{F(y,t_kU_k, t_k V_k)}{|x-y|^{\mu}}dy\right)(f_1(x,t_k U_k, t_k V_k)t_k U_k + f_2(x,t_k U_k, t_k V_k)t_k V_k) dx\\
& \geq \int_{B_{\rho/k}}\left(\int_{B_{\rho/k}}\frac{F(y,t_kU_k, t_k V_k)}{|x-y|^\mu}dy\right)(f_1(x,t_k U_k, t_k V_k)t_k U_k + f_2(x,t_k U_k, t_k V_k)t_k V_k) dx.\\
& \geq \exp\left( q (c_1^{\frac{n}{n-1}} + c_2^{\frac{n}{n-1}}) \left( \frac{t_k^{\frac{n}{n-1}}(\log k)}{\omega_{n-1}^{\frac{1}{n-1}}}\right)\right)\int_{B_{\rho/k}}\int_{B_{\rho/k}} \frac{dxdy}{|x-y|^\mu} \\
& \geq \tilde{C}_{\mu,n}  k^{\left(\frac{q \left(c_1^{\frac{n}{n-1}} + c_2^{\frac{n}{n-1}}\right)t_k^{\frac{n}{n-1} }}{\omega_{n-1}^{\frac{1}{n-1}}} - (2n-\mu)\right)}.
\end{split}
\end{equation*}
Hence by using the fact that $(c_1^{\frac{n}{n-1}} + c_2^{\frac{n}{n-1}}) = 2_n$, $t_k^n$ is bounded, $q>2$ and \eqref{est24}, we arrive at a contradiction by taking $k$ large enough. 
\end{proof}

\noindent \textbf{Proof of Theorem \ref{existence}}:\\
Let $\{(u_k, v_k)\}$ denotes a Palais Smale sequence at the mountain pass critical level $l^*$. 
Then by Lemma \ref{PS-bdd}  there exists a $u_0, v_0 \in W^{1,n}_0(\Omega)$ such that up to a subsequence
$u_k \rightharpoonup u_0$, $v_k \rightharpoonup v_0$ weakly in $W^{1,n}_0(\Omega)$ as $k \to \infty$. We prove our main result in several steps.\vspace{0.2cm}\\
\noindent \textbf{Step 1:} Positivity of $u_0, v_0$.\\
If $u_0 = v_0 \equiv 0$ (or either one of them) then using Lemma \ref{Choqest}, we infer that
\[\int_\Omega \left(\int_\Omega \frac{ F(y,u_k, v_k)}{|x-y|^{\mu}}dy\right)F(x,u_k, v_k) dx  \to 0\; \text{as}\; k \to \infty\]
and which further gives that $ \lim_{k \to \infty} J(u_k, v_k) = \frac{1}{n}\lim_{k\to\infty} M(\|(u_k, v_k)\|^n) = l^*$. Now in the light of Lemma \ref{cricest} and monotonicity of $M$, we obtain
\[\frac{2n}{2n-\mu}\|(u_k, v_k)\|^{\frac{n}{n-1}} <\frac{\alpha_{n}}{ 2_n}  \]
for large enough $k$.
Now, this implies that $\sup_{k} \int_{\Omega} f_i(x,u_k, v_k)^q dx < +\infty$ for some $q >\frac{2n}{2n-\mu}$, $i=1,2$. Along with  \eqref{bdd1}, Theorem \ref{MTST0}, the Hardy-Littlewood-Sobolev inequality and the Vitali's convergence theorem, we also obtain
\[\int_{\Omega}\left( \int_\Omega\frac{F(y,u_k, v_k)}{|x-y|^{\mu}}dy\right)(f_1(x,u_k, v_k)u_k + f_2(x,u_k, v_k)v_k) dx \to 0 \;\text{as}\; k \to \infty.\]
Hence $\lim_{k\to \infty}\langle J^\prime((u_k, v_k)), (u_k, v_k) \rangle=0$ gives $\lim_{k\to \infty}m(\|(u_k, v_k)\|^n)\|(u_k, v_k)\|^n=0$. Now from (m2), we obtain $\lim_{k \to \infty} \|(u_k, v_k)\|^n =0$. Thus using Lemma \ref{Choqest}, it must be that $\lim_{k \to \infty} J(u_k, v_k)=0 =l^*$ which contradicts $l^*>0$. Thus $u_0, v_0 \not \equiv 0$ and there exists a constant $\Upsilon >0$ such that up to a subsequence $\|u_k\|^n + \|v_k\|^n \to \Upsilon^n$ as $k \to \infty$. Then from Lemma~\ref{weakconv} and Lemma~\ref{Choqest}, we get as $k \to \infty$,
\begin{align*}
\int_\Omega \left(\int_\Omega \frac{F(y,u_k, v_k)}{|x-y|^{\mu}}dy\right)&(f_1(x,u_k,v_k) \varphi + f_2(x,u_k,v_k) \psi) dx \to\\
& \int_\Omega \left(\int_\Omega \frac{F(y,u_0, v_0)}{|x-y|^{\mu}}dy\right)(f_1(x,u_0, v_0) \varphi+ f_2(x,u_0,v_0) \psi)dx
\end{align*}
and
\begin{equation}\label{est25}
\begin{split}
&m(\Upsilon^n)  \int_\Omega (|\nabla u_0|^{n-2}\nabla u_0\nabla \varphi + |\nabla v_0|^{n-2}\nabla v_0\nabla \psi dx \\
&= \int_\Omega \left(\int_\Omega \frac{F(y,u_0, v_0)}{|x-y|^{\mu}}dy\right) (f_1(x,u_0, v_0)\varphi  + f_2(x,u_0, v_0) \psi)dx, \; \text{for all}\; \varphi, \psi \in W^{1,n}_0(\Omega).
\end{split}
\end{equation}
In particular, taking $\varphi = u_0^-$ and $\psi =0$ (similarly $\varphi = 0$ and $\psi =v_0^-$) in \eqref{est25} we get $m(\Upsilon^n)\|u_0^-\|=0$ (similarly $m(\Upsilon^n)\|v_0^-\|=0$) and together with assumption (m2) implies $u_0^-=0$ ( $v_0^-=0$) a.e. in $\Omega$. Therefore $u_0, v_0 \geq 0$ a.e. in $\Omega$.\\
From Theorem \ref{MTST0} and  H\"older inequality we get, $$\left(\int_\Omega \frac{F(y,u_0, v_0)}{|x-y|^{\mu}}dy \right)(f_1(x,u_0, v_0) + f_2(x, u_0,v_0)) dx \in L^q(\Omega)$$ for $1 \leq q <\infty$. By elliptic regularity results and strong maximum principle, we finally get that $u_0, v_0>0$ in $\Omega$.\\

\noindent \textbf{Step 2:}\ $ m(\|u_0, v_0\|^n)\|(u_0, v_0)\|^n \geq \displaystyle \int_\Omega \left(\int_\Omega \frac{F(y,u_0,v_0)}{|x-y|^{\mu}}dy \right)(f_1(x,u_0, v_0)u_0 + f_2(x, u_0, v_0) v_0)~dx.$\\
Suppose by contradiction
\[ m(\|u_0, v_0\|^n)\|(u_0, v_0)\|^n <\int_\Omega \left(\int_\Omega \frac{F(y,u_0, v_0)}{|x-y|^{\mu}}dy \right)(f_1(x,u_0, v_0)u_0 + f_2(x, u_0, v_0) v_0)dx\]
which implies that $\langle J^\prime(u_0, v_0),(u_0, v_0) \rangle <0$. For $t>0$ small enough, using $(f3)$ and $(f5)$ we have that
\begin{align*}
&\langle J^\prime(t u_0, t v_0), (u_0, v_0) \rangle  \geq  m_0t^{n-1}\|u_0, v_0\|^n \\
&- \frac{1}{2n}\int_\Omega \left(\int_\Omega \frac{f_1(y,tu_0, t v_0)tu_0 + f_2(x,tu_0, v_0) tv_0}{|x-y|^{\mu}}dy \right)
(f_1(x,tu_0, t v_0)u_0 + f_2(x, u_0,v_0) v_0)~dx \\
& \geq m_0t^{n-1}\|u_0, v_0\|^n
- \frac{t^{2\gamma +1}}{2n}\int_\Omega \left(\int_\Omega \frac{((u_0^\gamma + v_0^\gamma) u_0 + (u_0^\gamma + v_0^\gamma) v_0)}{|x-y|^{\mu}}dy \right)
((u_0^\gamma + v_0^\gamma) u_0 + (u_0^\gamma + v_0^\gamma) v_0)~dx \\
& \geq 0.
\end{align*}
Thus there exists a $t_*\in (0,1)$ such that $\langle J^\prime(t_*u_0, t_* v_0),(u_0 , v_0) \rangle=0$ {\it i.e.} $(t_*u_0, t_* v_0) \in N$. So using Lemma \ref{cricest}, $(m3)$ and $(f3)$ we get
\begin{align*}
&l^* \leq l^{**} \leq J((t_*u_0, t_* v_0)) = J(t_*u_0, t_* v_0) - \frac{1}{2n}\langle J^\prime(t_*u_0, t_* v_0),(u_0, v_0) \rangle\\
& = \frac{M(\|t_*u_0, t_* v_0\|^n)}{n} -\frac12 \int_\Omega \left(\int_\Omega \frac{ F(y,t_*u_0, t_* v_0)}{|x-y|^{\mu}}dy\right)F(x,t_*u_0, t_* v_0)dx\\
& \quad -\frac{1}{2n} m(\|t_*u_0, t_* v_0\|^n)\|(t_*u_0, t_* v_0)\|^n\\
& \quad + \frac{1}{2n}\int_\Omega \left(\int_\Omega \frac{F(y,t_*u_0, t_* v_0)}{|x-y|^{\mu}}dy\right) (f_1(x,t_*u_0, t_* v_0)t_*u_0 + f_2(x, t_* u_0, t_*  v_0)dx\\
& < \frac{M(\|u_0, v_0\|^n)}{n}  -\frac{1}{2n}m(\|(u_0, v_0)\|^n)\|(u_0, v_0)\|^n \\
& + \frac{1}{2n}\int_\Omega \left(\int_\Omega \frac{ F(y,t_*u_0, t_* v_0)}{|x-y|^{\mu}}dy\right) (f_1(x,t_*u_0, t_0 v_0)t_*u_0 + f_2(x, t_* u_0, t_* v_0) -  nF(x,t_*u_0, t_* v_0))dx\\
& \leq  \frac{M(\|u_0, v_0\|^n)}{n}  -\frac{1}{2n}m(\|u_0, v_0\|^n)\|u_0, v_0\|^n \\
& \quad \quad \quad  + \frac{1}{2n}\int_\Omega \left(\int_\Omega \frac{ F(y,u_0, v_0)}{|x-y|^{\mu}}dy\right)(f_1(x,u_0, v_0)u_0 + f_2(x, u_0, v_0) -nF(x,u_0, v_0)) dx\\
& \leq \liminf_{k \to \infty} \left( \frac{M(\|u_k, v_k\|^n)}{n}  -\frac{1}{2n}m(\|(u_k, v_k)\|^n)\|(u_k, v_k)\|^n \right.\\
&\quad \quad \quad  \left.+ \frac{1}{2n}\int_\Omega \left(\int_\Omega \frac{ F(y,u_k, v_k)}{|x-y|^{\mu}}dy\right)(f_1(x,u_k, v_k)u_k + f_2(x, u_k, v_k)-nF(x,u_k, v_k))dx\right)\\
& = \liminf_{k \to \infty} \left( J(u_k, v_k) - \frac{1}{2n}\langle J^\prime(u_k, v_k),(u_k, v_k) \rangle\right) = l^*.
\end{align*}
This gives a contradiction and completes the proof of \textbf{Step 2}. Similar arguments follows for the degenerate case also using $(m3)$.\\

\noindent \textbf{Step 3:} $J(u_0, v_0)= l^*$.\\
Using the weakly lower semicontinuity of norms in $\lim_{k \to \infty}J(u_k, v_k)= l^*$ and Lemma \ref{Choqest} we obtain $J(u_0, v_0) \leq l^*$. If $J(u_0, v_0)< l^*$ then it must be
$\lim_{k \to \infty} M(\|u_k, v_k\|^n) > M(\|u_0, v_0\|^n).$
Then continuity and motonicity of $M$ implies $\Upsilon^n > \|u_0, v_0\|^n$
and
\begin{equation}\label{est26}
M(\Upsilon^n) = n \left( l^* + \frac12 \int_\Omega \left(\int_\Omega \frac{ F(y,u_0, v_0)}{|x-y|^{\mu}}dy\right)F(x,u_0, v_0)dx\right).
\end{equation}
Define the sequence of functions $$(\tilde{u}_k, \tilde{v}_k )= \left(\frac{u_k}{\|u_k, v_k\|}, \frac{v_k}{\|u_k, v_k\|}\right)$$ such that $\|\tilde{u}_k, \tilde{v}_k\|=1$ and $\tilde{u}_k, \tilde{v}_k \rightharpoonup (\tilde{u}_0, \tilde{v}_0) = \left(\frac{u_0}{\Upsilon}, \frac{v_0}{\Upsilon} \right)$  weakly in $\mathcal{P}$ and $\|u_0, v_0\|< \Upsilon$. From Theorem \ref{MTST01}, we have that
\begin{equation}\label{est27}
\sup_{ k \in \mathbb{N}} \int_\Omega \exp \left( p(|\tilde{u}_k|^{\frac{n}{n-1}} + |\tilde{v}_k|^{\frac{n}{n-1}})\right) dx <+\infty,\; \text{for} \; 1<p<\frac{\alpha_n}{2_n(1-\|\tilde{u}_0,\tilde{v}_0\|^n)^{\frac{1}{n-1}}}.
\end{equation}
Then from $(m3)$, Claim (1) and Lemma \ref{cricest} we obtain
\begin{align*}
J(u_0, v_0)&= \frac{M(\|u_0, v_0\|^n)}{n}- \frac{m(\|u_0,v_0\|^n)\|u_0, v_0\|^n}{2n}\\
&+ \frac{1}{2n}\int_\Omega \left(\int_\Omega \frac{ F(y,u_0, v_0)}{|x-y|^{\mu}}dy\right)(f_1(x,u_0, v_0)u_0+ f_2(x, u_0,v_0) v_0-nF(x,u_0, v_0))dx\geq 0.
\end{align*}
and from \eqref{est26} we get
\begin{align*}
M(\Upsilon^n) = nl^* - n J(u_0, v_0) +M(\|u_0, v_0\|^n) < M\left(\left(\left(\frac{2n-\mu}{2n}\right)\frac{\alpha_n}{2_n} \right)^{n-1}\right) + M(\|u_0, v_0\|^n)
\end{align*}
which further implies together with (m1) that
\begin{equation*}
\Upsilon^n < \frac{1}{1-\|\tilde{u}_0, \tilde{v}_0\|^n} \left(\left(\frac{2n-\mu}{2n}\right)\frac{\alpha_n}{2_n} \right)^{n-1}.
\end{equation*}
Thus for $k\in \mathbb{N}$ large enough it is possible $b>1$ but close to $1$ such that
\[\frac{2n}{2n-\mu}                                                 ~b~ \|u_k, v_k\|^{\frac{n}{n-1}} \leq  \frac{\alpha_n}{2_n(1-\|\tilde{u}_0, \tilde{v}_0\|^n)^{\frac{1}{n-1}}}.\]
Therefore from \eqref{est27} we conclude that
\begin{equation}\label{est28}
 \int_\Omega \exp\left(\frac{2n}{2n-\mu}b(|u_k|^{\frac{n}{n-1}}+ |v_k|^{\frac{n}{n-1}}\right) \leq C
 \end{equation}
and
\begin{align*}
\int_\Omega \left(\int_\Omega \frac{ F(y,u_k, v_k)}{|x-y|^{\mu}}dy\right)&(f_1(x,u_k, v_k)u_k + f_2(x, u_k, v_k) v_kdx \to \\
&\int_\Omega \left(\int_\Omega \frac{ F(y,u_0, v_0)}{|x-y|^{\mu}}dy\right)(f_1(x,u_0, v_0)u_0 + f_2(x, u_0, v_0) v_0dx.
\end{align*}
This implies $(u_k, v_k) \to (u_0, v_0)$ strongly in $\mathcal{P}$ and hence $J(u_0, v_0) = l^*$ which is a contradiction. Hence, $J(u_0, v_0) = l^* = \lim_{k \to \infty} J(u_k, v_k)$ and $\|(u_k, v_k)\| \to \Upsilon$ implies $\Upsilon = \|(u_0, v_0)\|$. Then finally we have
\begin{equation*}
\begin{split}
m(\|u_0,v_0\|^n) &\left(\int_{\Omega} |\nabla u_0|^{n-2} \nabla u_0 \nabla \phi dx + \int_{\Omega} |\nabla v_0|^{n-2} \nabla v_0 \nabla \psi dx \right)\\
&= \int_{\Omega} \left(\int_{\Omega} \frac{F(x,u_0,v_0)}{|x-y|^{\mu}} dy \right) ( f_1(x,u_0,v_0) \phi + f_2(x,u_0,v_0) \psi )dx
\end{split}
\end{equation*}
for all $\phi, \psi \in W_0^{1,n}(\Omega)$. This completes the proof.
\qed
\section{Extensions and relative problems}
The results of this paper can be extended in various directions. Let us mention here some obvious generalizations:\vspace{0.2cm} \\
\textbf{1}: The class of system (KCS) can be extended to the following fractional Kirchhoff-Choquard system involving singular weights: 
\begin{equation*}
  (F)\left\{
         \begin{alignedat}{2}
             {} -m\left( \int_{\mathbb{R}^n} \int_{\mathbb{R}^n} \frac{|u(x)-u(y)|^{n/s}}{|x-y|^{2n}} dxdy\right)\Delta_{n/s}^s u
             & {}=  \left(\int_{\Omega} \frac{F(y,u,v)}{|y|^{\alpha}|x-y|^\mu}dy\right)\frac{f_1(x,u,v)}{|x|^{\alpha}}\; 
             && \mbox{ in }\, \Omega ,
             \\
             -m\left( \int_{\mathbb{R}^n} \int_{\mathbb{R}^n} \frac{|v(x)-v(y)|^{n/s}}{|x-y|^{2n}} dxdy\right)\Delta_{n/s}^s v
             & {}=  \left(\int_{\Omega} \frac{F(y,u,v)}{|y|^{\alpha} |x-y|^\mu}dy\right)\frac{f_2(x,u,v)}{|x|^{\alpha}}\;
             && \mbox{ in }\, \Omega ,
             \\
             u,v & {}= 0
             && \mbox{ in }\, \mathbb{R}^n \setminus \Omega,
          \end{alignedat}
     \right.
\end{equation*}
where $(-\Delta)^s_{n/s}$ is the $n/s$ fractional Laplace operator, $s \in (0,1)$, $n \geq 1$, $\mu \in (0,n), 0< \alpha < \min\{\frac{n}{2}, n-\mu\}$, $\Omega \subset \mathbb{R}^n$  is a smooth bounded domain, $m : \mathbb{R}^+ \to \mathbb{R}^+$ and $F: \Omega \times  \mathbb{R}^2 \to \mathbb{R}$ is a continous functions where $F$ behaves like $\exp(|u|^{\frac{n}{n-s}} + |v|^\frac{n}{n-s})$ as $|u|, |v| \to \infty.$ \vspace{0.2cm}\\
We conjecture that the following Moser-Trudinger inequality holds in case fractional Sobolev space (counterpart of Theorem \ref{MTST0}):
Define $\mathcal{L}:= X_0 \times X_0$ endowed with the norm $$\|(u,v)\|_{\mathcal{L}}:= \left(\|u\|_{X_0}^{n/s} + \|u\|_{X_0}^{n/s} \right)^{\frac{s}{n}}$$ where  
$$X_0:= \{u \in W^{s,n/s}(\mathbb{R}^n): u=0 \ \text{in} \ \mathbb{R}^n \setminus \Omega\}$$ endowed with the norm $$\|u\|_{X_0} = \left(\int_{\mathbb{R}^{2n} \setminus (\Omega \times \Omega)^c} \frac{|u(x)-u(y)|^{\frac{n}{s}}}{|x-y|^{2n}} dxdy \right)^{n/s}$$
\begin{thm}\label{MTST011}
For $(u,v) \in \mathcal{L}$, $n/s > 2$ and $\Omega \subset \mathbb{R}^n$ is a smooth bounded domain, we have $$\int_{\Omega} \exp\left({ \Pi \left(|u|^{\frac{n}{n-s}}+|v|^{\frac{n}{n-s}}\right)}\right) dx < \infty$$ for any $ \Pi >0.$ Moreover,
\begin{equation}\label{main1121}
\sup_{\|(u,v)\|_{\mathcal{L}}=1}\int_{\Omega} \exp\left( \Pi \left(|u|^{\frac{n}{n-s}}+|v|^{\frac{n}{n-s}}\right)\right) dx < \infty ,\ \text{provided} \ \   \Pi \leq \frac{\alpha_{n,s}}{2_{n,s}}
\end{equation}
where $\alpha_{n,s}= \frac{n}{\omega_{n-1}} \left(\frac{\Gamma\left(\frac{n-s}{2}\right)}{\Gamma\left(\frac{s}{2} \right)2^s \pi^{n/2}} \right)^{\frac{-n}{n-s}}$, $2_{n,s}= 2^{\frac{n-2s}{n-s}}.$ Furthermore if $\Pi > \frac{\alpha_{n,s}^*}{2_{n,s}}$, then there exists a pair $(u,v) \in \mathcal{L}$ with $\|(u,v)\|_{\mathcal{L}}=1$ such that the supremum in \eqref{main1121} is infinite.
\end{thm}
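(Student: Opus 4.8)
The plan is to run the argument of Theorem \ref{MTST0} almost verbatim, with the Adams--Moser--Trudinger inequality of Theorem \ref{TM-ineq} replaced by the sharp fractional Moser--Trudinger inequality on $W^{s,n/s}_0(\Omega)$, namely $\sup_{\|w\|_{X_0}\le1}\int_\Omega\exp(\gamma|w|^{n/(n-s)})\,dx<\infty$ if and only if $\gamma\le\alpha_{n,s}$, together with the elementary Lemma \ref{basic}. Note that the hypothesis $n/s>2$ is exactly what makes $\alpha:=\frac{s}{n-s}$ lie in $(0,1)$, so that Lemma \ref{basic} applies and $2^{1-\alpha}=2^{(n-2s)/(n-s)}=2_{n,s}$. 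For the unconditional finiteness, fix $(u,v)\in\mathcal L$ and, for $\varepsilon>0$, split $u=\phi+\eta$ with $\phi\in C_c^\infty(\Omega)$ and $\|\eta\|_{X_0}$ small, and likewise $v=\widetilde\phi+\widetilde\eta$; by Young's inequality $|u|^{n/(n-s)}\le(1+\varepsilon)|\eta|^{n/(n-s)}+C_\varepsilon|\phi|^{n/(n-s)}$ and similarly for $v$, so that (since $\phi,\widetilde\phi\in L^\infty$) a single generalized Hölder step reduces matters to $\int_\Omega\exp\big((1+\varepsilon)\Pi\,2_{n,s}\,|\eta/\|\eta\|_{X_0}|^{n/(n-s)}\big)$ and its analogue for $\widetilde\eta$, both finite by the scalar fractional inequality once $(1+\varepsilon)\Pi\,2_{n,s}\|\eta\|_{X_0}^{n/(n-s)}<\alpha_{n,s}$. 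This yields $\int_\Omega\exp(\Pi(|u|^{n/(n-s)}+|v|^{n/(n-s)}))<\infty$ for every $\Pi>0$.

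For \eqref{main1121}, take $(u,v)$ with $\|(u,v)\|_{\mathcal L}=1$ and set $a=\|u\|_{X_0}^{n/s}$, $b=\|v\|_{X_0}^{n/s}$, so $a+b=1$; Lemma \ref{basic} with exponent $\alpha=\frac{s}{n-s}$ gives $\frac{\|u\|_{X_0}^{n/(n-s)}}{2_{n,s}}+\frac{\|v\|_{X_0}^{n/(n-s)}}{2_{n,s}}\le1$. Exactly as in Cases 1 and 2 of the proof of Theorem \ref{MTST0} --- inserting a factor $|\Omega|^{1/c}$ via a three-term generalized Hölder inequality when the last inequality is strict, and a two-term Hölder inequality when it is an equality, and using $\Pi\le\alpha_{n,s}/2_{n,s}$ --- one bounds $\int_\Omega\exp(\Pi(|u|^{n/(n-s)}+|v|^{n/(n-s)}))$ by
\[C\Big(\int_\Omega\exp\big(\alpha_{n,s}(|u|/\|u\|_{X_0})^{n/(n-s)}\big)\Big)^{\|u\|_{X_0}^{n/(n-s)}/2_{n,s}}\Big(\int_\Omega\exp\big(\alpha_{n,s}(|v|/\|v\|_{X_0})^{n/(n-s)}\big)\Big)^{\|v\|_{X_0}^{n/(n-s)}/2_{n,s}},\]
with $C$ depending only on $|\Omega|$; each factor is controlled by the sharp constant in the scalar fractional Moser--Trudinger inequality, so the supremum is finite.

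For optimality, I would use a concentrating family $w_k\in X_0$ --- the fractional counterpart of the Adams function / truncated logarithm employed in the proofs of Theorems \ref{MTST0} and \ref{MTST00} --- with $\|w_k\|_{X_0}=1$, $\mathrm{supp}(w_k)\subset B_\rho(0)\subset\Omega$, and $\alpha_{n,s}|w_k|^{n/(n-s)}\ge n\log k$ on $B_{\rho/k}$. Choosing $c_1=c_2=2^{-s/n}$ (the maximiser in Lemma \ref{basic}, so that $c_1^{n/s}+c_2^{n/s}=1$ and $c_1^{n/(n-s)}+c_2^{n/(n-s)}=2_{n,s}$) and putting $U_k=c_1w_k$, $V_k=c_2w_k$, one gets $\|(U_k,V_k)\|_{\mathcal L}=1$; if $\Pi=(1+\varepsilon)\alpha_{n,s}/2_{n,s}$ then on $B_{\rho/k}$ we have $\Pi(|U_k|^{n/(n-s)}+|V_k|^{n/(n-s)})=(1+\varepsilon)\alpha_{n,s}|w_k|^{n/(n-s)}\ge n(1+\varepsilon)\log k$, whence $\int_\Omega\exp(\cdots)\ge|B_{\rho/k}|\,k^{n(1+\varepsilon)}\gtrsim k^{n\varepsilon}\to\infty$.

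\textbf{Main obstacle:} the Hölder splitting is insensitive to the nonlocality and is literally the computation of Theorem \ref{MTST0}; the genuinely new input is the fractional toolkit --- the sharp value $\alpha_{n,s}$ of the scalar fractional Moser--Trudinger constant, and, for the optimality part, an explicit concentrating competitor in $W^{s,n/s}_0(\Omega)$ with Gagliardo seminorm equal (or equal up to a harmless $o(1)$) to $1$ and a prescribed value on a shrinking ball. Unlike the local case $m=1$, where the truncated logarithm is an exact competitor, in the nonlocal setting such a family is available only through the sharp-up-to-$o(1)$ capacitary constructions of the fractional Moser--Trudinger literature, and one must check that the $o(1)$ error in $\|w_k\|_{X_0}$ is absorbed after renormalisation without degrading the exponent $n(1+\varepsilon)$ --- which it is, for $k$ large.
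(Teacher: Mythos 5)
There is no proof in the paper to compare yours against: Theorem \ref{MTST011} appears in Section 4 (``Extensions and relative problems'') and is explicitly introduced with the words ``We conjecture that the following Moser--Trudinger inequality holds in case fractional Sobolev space''. The reason it is only a conjecture is exactly the ingredient your argument takes for granted. Your reduction of the product-space statement to the scalar case via Lemma \ref{basic} and the (generalized) H\"older inequality is formally identical to the proof of Theorem \ref{MTST0} and is unobjectionable as a reduction; but the scalar input you invoke --- $\sup_{\|w\|_{X_0}\leq 1}\int_{\Omega}\exp\left(\gamma|w|^{\frac{n}{n-s}}\right)dx<\infty$ \emph{if and only if} $\gamma\leq\alpha_{n,s}$ --- is not a theorem. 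In the fractional setting $W^{s,n/s}_0(\Omega)$ what is known is finiteness of the supremum up to some threshold and blow-up above an explicit constant, but the identification of the sharp constant with $\alpha_{n,s}$ (and finiteness at the endpoint) is open; the paper itself hedges on this by writing the blow-up threshold as $\alpha_{n,s}^{*}$ rather than $\alpha_{n,s}$ in the last sentence of the statement. So your proof of \eqref{main1121} is conditional on an unproved sharp scalar inequality, and this is the gap, not a technicality.

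The same issue undermines the optimality part more severely than your ``main obstacle'' paragraph concedes. In the local case the argument rests on the explicit capacity bound \eqref{ul} (Lakkis), which produces a normalized family $w_k$ with $w_k\geq\left(\frac{n\log k}{\zeta_{n,m}}\right)^{\frac{n-m}{n}}$ on $B_{\sigma/k}$ and $\|w_k\|=1$ \emph{exactly}, so that the exponent $n(1+\epsilon)$ comes out clean. In the nonlocal case no such exact capacitary competitor with constant $\alpha_{n,s}$ is available; the known truncated-logarithm or smoothed constructions give a Gagliardo seminorm controlled only up to a multiplicative constant that is not known to match $\alpha_{n,s}$. Consequently, after renormalizing $w_k$ you cannot guarantee $\alpha_{n,s}|w_k|^{\frac{n}{n-s}}\geq n\log k$ on $B_{\rho/k}$, and the divergence for every $\Pi>\alpha_{n,s}/2_{n,s}$ does not follow --- only divergence above whatever constant the available competitors actually realize. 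Your write-up would be accurate if reframed as: ``Theorem \ref{MTST011} follows from the (conjectural) sharp scalar fractional Moser--Trudinger inequality by the argument of Theorem \ref{MTST0}''; as a proof of the theorem itself it is incomplete.
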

\noindent Using Theorem \ref{MTST011}, doubly weighted Hardy-Littlewood-Sobolev inequality, we can prove the existence and multiplicity of solutions for the problem $(F)$ (see \cite{AGMS, AGMS1}). \vspace{0.2cm}\\
\textbf{2}: We infer that similiar methods can be used to the following Kirchhoff-Choquard system for the Polyharmonic operator: 
\begin{equation*}
     (P)
     \left\{
         \begin{alignedat}{2}
             {} -M(\int_\Omega|\nabla^m u|^2dx)\Delta^m u
             & {}=  \left(\int_{\Omega} \frac{F(y,u,v)}{|y|^{\alpha}|x-y|^\mu}dy\right)\frac{f_1(x,u,v)}{|x|^{\alpha}},\; u>0
             && \quad\mbox{ in }\, \Omega ,
             \\
             -M(\int_\Omega|\nabla^m v|^2dx)\Delta^m v
             & {}=  \left(\int_{\Omega} \frac{F(y,u,v)}{|y|^{\alpha} |x-y|^\mu}dy\right)\frac{f_2(x,u,v)}{|x|^{\alpha}},\; v>0
             && \quad\mbox{ in }\, \Omega ,
             \\
             u= \nabla u= \dots = \nabla^{m-1} u  & {}= 0
             && \quad\mbox{ on }\, \partial\Omega,
             \\
             v= \nabla v= \dots = \nabla^{m-1} v  & {}= 0
             && \quad\mbox{ on }\, \partial\Omega,
          \end{alignedat}
     \right.
\end{equation*}
where $n=2m$, $\mu \in (0,n), 0< \alpha < \min\{\frac{n}{2}, n-\mu\}$, $\Omega \subset \mathbb{R}^n$  is a smooth bounded domain, $M : \mathbb{R}^+ \to \mathbb{R}^+$ and $F: \Omega \times  \mathbb{R}^2 \to \mathbb{R}$ is a continous functions where $F$ behaves like $\exp(|u|^{\frac{n}{n-m}} + |v|^\frac{n}{n-m})$ as $|u|, |v| \to \infty.$ The vectorial polyharmonic operator $\Delta^{m}_{\frac{n}{m}}$ is defined as 
\begin{equation*}
\Delta^m_{\frac{n}{m}} u=\left\{
\begin{split}
&\nabla.\{\Delta^{j-1}(|\nabla \Delta^{j-1} u|^{\frac{n}{m}-2} \nabla \Delta^{j-1}u)\} \; \ \  \text{if}\ m=2j-1;  \\
&\Delta^{j}(|\Delta^j u|^{\frac{n}{m}-2} \Delta^j u) \;\ \ \ \ \ \ \ \ \ \ \ \ \ \ \ \ \ \ \ \ \ \ \text{if}\ m=2j.
\end{split}
\right.
\end{equation*}
The symbol $\nabla^m u $ denotes the $m^{\text{th}}$ order gradient of $u$ and is defined as,
\begin{equation*}
\nabla^m u=\left\{
\begin{split}
&\nabla \Delta^{(m-1)/2}u \; \  \text{if}\ m \ \text{is odd};  \\
&\Delta^{m/2} u \;\ \ \ \ \ \ \ \ \text{if}\ m \ \text{is even},
\end{split}
\right.
\end{equation*}
where $\Delta$ and $\nabla$ denotes the usual Laplacian and gradient operator respectively and also $\nabla^m u. \nabla^m v$ denotes the product of two vectors when $m$ is odd and product of two scalars when $m$ is even.\vspace{0.2cm}\\
Using Theorems \ref{MTST0}, \ref{MTST01} and extension of Theorems \ref{MTST00} and \ref{MTST11} (which is an open question), we can study the system of Kirchhoff-Choquard equation for the Polyharmonic operator. \vspace{0.2cm}\\
\textbf{3}: Another important open question is the Adams-Moser-Trundinger inequalities in Cartesian product of Sobolev space with unbounded domain (or in $\mathbb{R}^n$).

\section*{Acknowledgement}
The first author would like to thank Department of Mathematics, Indian Institute of Technology, Delhi for their kind hospitality during his visit.


\begin{thebibliography}{100}
\bibitem{AdiSan} Adimurthi and K. Sandeep, {\it A singular Moser-Trudinger embedding and its applications}, NoDEA Nonlinear Differential Equations Appl., {\bf 13} (2007), no. 5-6, 585-603.
\bibitem{Adams} D.R. Adams, {\it A sharp inequality of J. Moser for higher order derivatives}, Ann. Math., {\bf 128}  (1988), 385-398.
\bibitem{Alves} C.O. Alves, F.J.S.A. Corr\^ea, {\it On existence of solutions for a class of problem involving a nonlinear operator}, Commun. Appl. Nonlinear Anal., {\bf  8} (2001), 43-56.
\bibitem{Alves1} C.O. Alves, F.J.S.A. Corr\^ea, T.F. Ma, {\it Positive solutions for a quasilinear elliptic equation of Kirchhoff type}, Comput.
Math. Appl., {\bf 49} (2005), 85-93.
\bibitem{AGMS} R. Arora, J. Giacomoni, T. Mukherjee and K. Sreenadh, {\it n-Kirchhoff-Choquard equations with exponential nonlinearity}, Nonlinear Anal, {\bf 186} (2019), 113-144.
\bibitem{AGMS1} R. Arora, J. Giacomoni, T. Mukherjee and K. Sreenadh, {\it Polyharmonic Kirchhoff problems involving exponential non-linearity of Choquard type with singular weights}, arXiv:1911.03546 (2019).
\bibitem{BC} L. Berg\'e, A. Couairon, Nonlinear propagation of self-guided ultra-short pulses in ionized gases, Phys. Plasmas.
\bibitem{DGPS} F. Dalfovo, S. Giorgini, L.P. Pitaevskii, S. Stringari, {\it Theory of Bose-Einstein condensation in trapped gases}, Rev. Modern Phys., {\bf 71} (1999), 463-512.
\bibitem{FS} G.M. Figueiredo, U.B. Severo, {\it Ground state solution for a Kirchhoff problem with exponential critical growth}, Milan J. Math., {\bf 84} (2016), 23-39.
\bibitem{GMS} S. Goyal, P.K. Mishra, K. Sreenadh, {\it $n$-Kirchhoff type equations with exponential nonlinearities}, RACSAM, {\bf 116} (2016),
219-245.
\bibitem{Lakkis} O. Lakkis, {\it Existence of solutions for a class of semilinear polyharmonic equations with critical exponential growth}, Adv. Differential Equations, {\bf 4}  (6) (1999), 877-906.
\bibitem{LamLu} N. Lam and G. Lu, { \it Sharp singular Adams inequality in higher order Sobolev spaces},  Methods Appl. Anal., {\bf 19} (3) (2012), 243-266.
\bibitem{LamLu1} N. Lam and G. Lu, The Moser-Trudinger and Adams inequalities and elliptic and subelliptic equations with nonlinearity of exponential growth, in: Recent Development in Geometry and Analysis, Advanced Lectures in Mathematics, vol. 23, pp. 179-251.
\bibitem{LiLi} Y. Li, F. Li, J. Shi, {\it Existence of a positive solution to Kirchhoff type problems without compactness conditions}, J.
Differential Equations, {\bf 253} (2012), 2285-2294.
\bibitem{Lieb} E.H. Lieb, {\it Existence and uniqueness of the minimizing solution of Choquard nonlinear equation}, Stud. Appl. Math., {\bf 57} (1976/77), 93-105.
\bibitem{Lions} P.L. Lions, {\it The concentration compactness principle in the calculus of variations part-I}, Rev. Mat. Iberoamericana, {\bf 1} (1985), 185-201.
\bibitem{Lu} D. L\"u, {\it  A note on Kirchhoff-type equations with Hartree-type nonlinearities}, Nonlinear Anal., {\bf 99} (2014), 35-48.
\bibitem{MSK} N. Megrez, K. Sreenadh, and B. Khaldi. {\it Multiplicity of positive solutions for a gradient system with an exponential nonlinearity},  Electron. J. Differential Equations, {\bf 236} (2012), 1-16.
\bibitem{MS}V. Moroz and J. Van Schaftingen, {\it Groundstates of nonlinear Choquard equations: Existence, qualitative properties and decay asymptotics}, J. Funct. Anal., {\bf 265} (2013), 153-184.
\bibitem{Moser} J. Moser, {\it A sharp form of an inequality by N. Trudinger}, Indiana Univ. Math. J., {\bf 20} (1971), 1077-1092.
\bibitem{ms-survey} V. Moroz and J.V. Schaftingen, {\it A Guide to the Choquard equation}, Journal of Fixed Point theory and Applications, {\bf 19} (2017), 773-813.

\bibitem{MuSr} T. Mukherjee and K. Sreenadh, {\it Critical growth elliptic problems with Choquard type nonlinearity: A survey}, arXiv:1811.04353 (2018).
\bibitem{Pekar} S. Pekar, Untersuchung\"uber die Elektronentheorie der Kristalle, Akademie Verlag, Berlin, 1954.
\bibitem{Poho} S.I. Pohozaev, {\it The Sobolev embedding in the case $pl=n$}, Proceedings of the Technical Scientific Conference on Advances of Scientific Research, {\bf  1965} (1964), 158-170.
\bibitem{PXZ} P. Pucci, M. Xiang, and B. Zhang. {\it Existence results for Schr\"odinger-Choquard-Kirchhoff equations involving the fractional p-Laplacian},Adv. Calc. Var., {\bf 12} (3) (2019), 253-275.
\bibitem{trudinger} N. S. Trudinger, {\it On embedding into Orlicz space and some applications} J.Math. Mech. { \bf 17} (1967), 473-484.
\end{thebibliography}
\end{document}